\newcommand{\1}{\mathds{1}}
\DeclareMathOperator*{\argmax}{argmax}
\newcommand{\be}{\begin{equation}}
	\newcommand{\ee}{\end{equation}}
	\newcommand{\ov}{\overline}
\definecolor{azzurro}{rgb}{0.11, 0.67, 0.84}
\theoremstyle{definition}
\newtheorem{definition}{Definition}
\theoremstyle{remark}
\newtheorem{remark}{Remark}
\newtheorem{example}{Example}
\theoremstyle{proposition}
\newtheorem{proposition}{Proposition}
\newtheorem{corollary}{Corollary}
\newtheorem{lemma}{Lemma}
\newtheorem{theorem}{Theorem}
\newcommand{\mc }{\mathcal }  
\newcommand{\ba}{\begin{array}}  
\newcommand{\ea}{\end{array}}  
\newcommand{\dd}{\,\mathrm{d}} 
\newcommand{\N}{\mathcal{N}} 
\newcommand{\R}{\mathbb{R}} 
\newcommand{\A}{\mathcal{A}}  
\pgfplotsset{ 
	compat=newest, 
	legend style =
	{font=\footnotesize \sffamily},
	label style = {font=\small\sffamily},
	every tick label/.append style={font=\small}}
\newcommand{\eps}{\varepsilon}
\newcommand{\cmon}{\mathcal C^{\uparrow}_0}
\newcommand{\lip}{\hbox{Lip}_K}
\newcommand{\nexp}{\hbox{Lip}_1}
\newcommand{\aff}{\hbox{Aff}}
\newcommand{\affone}{\hbox{Aff}_1}
\newcommand{\cost}{b}
\newcommand{\ds}{\displaystyle}
\newcommand{\pa}{\tilde{p}} 
\newcommand{\ca}{\tilde{C}} 
\newcommand{\ua}{\tilde{u}}
\newcommand{\apu}{\ov u}
\newcommand{\act}{\mathcal N_\text{a}}
\title{How competitive are pay-as-bid auction games?}% A novel supply function approach to achieve perfect competition}
\author[1]{Martina Vanelli}
\author[2\thanks{ G. Como is also with the Department of Automatic Control, Lund University, 22100 Lund, Sweden.}]{Giacomo Como}
\author[2]{Fabio Fagnani}
\affil[1]{ICTEAM institute, UCLouvain, B-1348 Louvain-la-Neuve, Belgium (email: martina.vanelli@uclouvain.be)}
\affil[2]{Department of Mathematical Sciences ``G.L.~Lagrange,'' Politecnico di Torino, 10129 Torino, Italy  (e-mail: \{giacomo.como;\,fabio.fagnani\}@polito.it).}
\date{}
\begin{document}
		\maketitle
		\begin{abstract}
%			We explore the dynamics of pay-as-bid auctions, a discriminatory pricing model where firms submit supply functions to compete in asymmetric markets. A critical challenge lies in the choice of the strategy space: when it includes all non-decreasing continuous functions, pure-strategy Nash equilibria often fail to exist. To address this, we restrict the strategy space to Lipschitz continuous supply functions, unveiling a landscape where Nash equilibria always exist under standard concavity conditions.	These equilibria are elegantly structured: they consist of affine supply functions with slopes equal to the maximum allowed Lipschitz constant. For markets with affine demand and zero marginal costs, we establish the uniqueness of the equilibrium up to the market-clearing price. Extending this to quadratic production costs, we derive closed-form solutions and reveal that, as the Lipschitz constraint grows, the pay-as-bid auction converges toward perfect competition with efficient allocation.					Our findings position the pay-as-bid auction as a model that balances competition and efficiency, yielding outcomes that surpass traditional uniform-price auction models by achieving lower market-clearing prices and more efficient allocations.			
%	\color{red}
	We study the pay-as-bid auction game, a supply	function model with discriminatory pricing and asymmetric firms. %A critical challenge lies in the choice of the strategy space: when it includes all non-decreasing continuous functions, pure-strategy Nash equilibria often fail to exist. 
In this game, strategies are non-decreasing supply functions relating price to quantity and the exact choice
of the strategy space turns out to be a crucial issue: when it includes all non-decreasing continuous functions, pure-strategy Nash equilibria often fail to exist. % We first observe that for our game (pure-strategy) Nash		equilibria do not generally exist when the strategy space includes all non-decreasing continuous functions. 
To overcome this, we restrict the strategy space to the set of Lipschitz-continuous functions %. In this new setting, we %are able to This enables to 
and we prove that Nash equilibria always exist (under standard concavity assumptions) and consist of functions that are affine on their own support and have slope equal to the maximum allowed Lipschitz constant.  We further show that the Nash equilibrium is unique up to the market-clearing price when the demand is affine and the asymmetric marginal production costs are homogeneous in zero. For quadratic production costs, we derive a closed-form expression and we compute the limit as the allowed Lipschitz constant grows to infinity. Our results show that in the limit the pay-as-bid auction game achieves perfect competition with efficient allocation %for heterogeneous costs, 
and induces a lower market-clearing price compared to supply function models based on uniform price auctions.
			
		\end{abstract}

%%%%%%%%%%%%%%%%%%%%%%%%%%%%%%%%%%%%%%%%%%%%%%%%%%%%%%%%%%%%%%%%%%%%%%

% Optionally include a table of contents
%\vspace{1cm}
%\setcounter{tocdepth}{2} % adjust to 1 if desired
%\tableofcontents

%% Samples of sectioning (and labeling) in MOOR.
% NOTE: (1) all section levels end with a period,
%       (2) capitalization is as shown (sentence style, not title style).
%
%\begin{keywords}
%Pay-as-bid auctions, Supply Function Equilibria, Perfect competition
%\end{keywords}

\section{Introduction.}\label{intro} %%1.
%In electricity markets, a significant challenge arises from the inability to store electricity on a substantial scale, leading to the dispatching problem, i.e., the continuous management of energy flows within the transmission network.  This inherent limitation presents substantial challenges in market design and operation \cite{stoft2002power, kirschen2018fundamentals}. Consequently, these markets adopt a specific structure: most electricity is sold in advance through the day-ahead market, while lower volumes are traded in the ancillary services market to address congestion, reserve margins, and real-time balancing \cite{ocker2017german,rassenti,rancilio2022ancillary}. 
The two primary pricing rules used in electricity spot markets are the uniform price rule and the pay-as-bid rule (\cite{wilson2002architecture,cramton2017electricity}). 
Under the uniform price rule, all transactions within a given time period are settled at a single uniform price. This means that all buyers and sellers pay or receive the same price for electricity, regardless of their individual bid or offer prices. The uniform price is typically set at the market-clearing price, which is the price at which supply equals demand and all bids and offers are matched (see the left of Figure \ref{fig:uniform-pab}). In contrast, the pay-as-bid rule settles transactions at the prices specified by the individual bids and offers submitted by participants. Each participant pays or receives the price they bid or offered, rather than a uniform price (see the right of Figure \ref{fig:uniform-pab}). %In pay-as-bid auctions, firms overbid to ensure profit and their behavior is less predictable. 

Pricing rules can have significant implications for market outcomes and participant behavior.
In the context of electricity markets, the debate between the uniform price auction and the pay-as-bid auction emerged in the late 1990s and early 2000s, and in particular after the temporary adoption in England and Wales of discriminatory pricing in day-ahead markets (\cite{wilson2002architecture}), as well as after the ensuing debate in California (\cite{kahn2001uniform,tierney2008uniform}). 
Currently, day-ahead markets are usually structured as uniform-price auctions, while  pay-as-bid auctions are often employed in ancillary services markets and balancing markets (\cite{ahlqvist2022survey, dalkilic2013pricing, dalkilic2016dayahead}). Due to the high electricity prices experienced in recent years, the EU Agency for the Cooperation of Energy Regulators (\cite{acer2021}) considered alternative price formation models to replace uniform-price auctions in day-ahead markets, reopening the debate. %One option is indeed using pay-as-bid auctions to ensure affordability and reliability while reducing dependence on gas and coal, especially with the high penetration of renewables. 

Despite their significant economic impact, there remains a lack of understanding regarding equilibrium behavior in pay-as-bid auctions. It is worth noting that pay-as-bid auctions have received relatively less attention in the literature compared to uniform-price auctions, particularly within the context of supply function models \cite{sfe}, and most of existing results assume perfectly inelastic demand, constant/symmetric costs and/or involve solving a set of differential equations (see \cite{sfe_pab, genc, wilson2008supply, ausubel2014demand}). 
Also, understanding which type of auction is better is still an open question (\cite{zhao2023uniform, willems2022bidding, pycia2025case}).
While uniform-price auctions are criticized for causing price spikes and inefficiencies, pay-as-bid auctions may not ensure production efficiency and can favor firms with better market prediction capabilities.

\begin{figure}
	\centering
	\begin{tikzpicture}[scale=0.32]%[scale=0.15]
		\draw[name path=e,scale=1,dashed,  domain=0:7.1, smooth, variable=\x] plot ({\x}, {5});
		\draw[name path=a,scale=1, domain=0:7.1, smooth, variable=\x] plot ({\x}, {0});
		
		\tikzfillbetween[of=a and e]{gray!20};	
		\draw[->] (-0.5, 0) -- (11, 0) node[right]{\small quantity};  
		\draw[->] (0, -0.5) -- (0, 11) node[above]{\small price}; 
		\draw[scale=1,red,  domain=0:10, smooth, variable=\x] plot ({\x}, {(100-\x^2)/10});
		%	\draw[scale=1,red,  domain=5:10, smooth, variable=\x] plot ({\x}, {(1/10)*(\x-10)^2});
		\node[red] at (3.5,7) {\small Demand};
		\node[blue] at (3.5,3.5) {\small Supply};
		\node[red] at (3.5,6) {\small curve};
		\node[blue] at (3.5,2.5) {\small curve};
		\draw[scale=1,blue, domain=7.1:10, smooth, variable=\x] plot ({\x}, {(1/10)*(\x)^2});

		%\draw[scale=1,red, name path = d, domain=0:5, smooth, variable=\x] plot ({\x}, {(1/10)*(\x-10)^2});
		
		\draw[scale=1,blue,name path =s, domain=0:7.1, smooth, variable=\x] plot ({\x}, {(1/10)*(\x)^2});
		
		\node at (-0.5,5) {\small$ p^*$};
		\node at (7.1,-0.5) {\small $q^*$};
		\draw[scale=1,dashed, domain=0:4.9, smooth, variable=\x] plot ({7.1}, {\x});
		\node at (7.1,4.9) {\textbullet};
		
		%				\node at (7,1.5) {$\longleftarrow$};
		%			\node at (10,2.1) {Uniform price};
		%				\node at (10,1.3) {remuneration};	
		\node at (10.9,5) {\small Market-clearing };
		\node at (10.7,4) {\small price};
	\end{tikzpicture}
	\begin{tikzpicture}[scale=0.32]%[scale=0.15]
		\draw[name path=e,scale=1,dashed,  domain=0:7.1, smooth, variable=\x] plot ({\x}, {5});
		\draw[name path=a,scale=1, domain=0:7.1, smooth, variable=\x] plot ({\x}, {0});
		\draw[scale=1,blue,name path =s, domain=0:7.1, smooth, variable=\x] plot ({\x}, {(1/10)*(\x)^2});
		\tikzfillbetween[of=a and s]{gray!20};
		\draw[->] (-0.5, 0) -- (11, 0) node[right]{\small quantity};  
		\draw[->] (0, -0.5) -- (0, 11) node[above]{\small price}; 
		\draw[scale=1,red,  domain=0:10, smooth, variable=\x] plot ({\x}, {(100-\x^2)/10});
		%	\draw[scale=1,red,  domain=5:10, smooth, variable=\x] plot ({\x}, {(1/10)*(\x-10)^2});
		\node[red] at (3.5,7) {\small Demand};
		\node[blue] at (3.5,3.5) {\small Supply};
		\node[red] at (3.5,6) {\small curve};
		\node[blue] at (3.5,2.5) {\small curve};
		\draw[scale=1,blue, domain=7.1:10, smooth, variable=\x] plot ({\x}, {(1/10)*(\x)^2});

		%\draw[scale=1,red, name path = d, domain=0:5, smooth, variable=\x] plot ({\x}, {(1/10)*(\x-10)^2});

		\node at (-0.5,5) {\small $p^*$};
		\node at (7.1,-0.5) {\small $q^*$};
		\draw[scale=1,dashed, domain=0:4.9, smooth, variable=\x] plot ({7.1}, {\x});
		\node at (7.1,4.9) {\textbullet};
		
		%	\tikzfillbetween[of=a and e]{gray, opacity=0.2};	
		%				\node at (7,1.5) {$\longleftarrow$};
		%			\node at (10,2.1) {Uniform price};
		%				\node at (10,1.3) {remuneration};	
		%\node at (10,2.6) {market-clearing price};
		\node at (10.9,5) {\small Market-clearing};
		\node at (10.7,4) {\small price};
	\end{tikzpicture}
	\caption{On the left, the total uniform-price remuneration, on the right, the total pay-as-bid remuneration (gray-shaded regions).}
	\label{fig:uniform-pab}	
\end{figure}
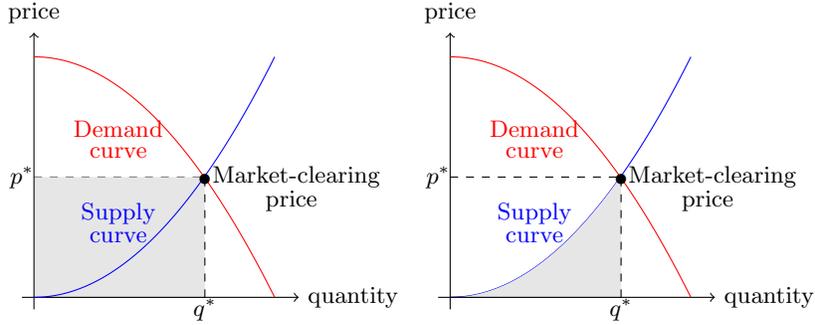

In this paper,	we  propose and analyze a supply function model with pay-as-bid remuneration and asymmetric firms, called pay-as-bid (PAB) auction game. In our model, strategies are functions relating price to quantity, in the spirit of Supply Function Equilibria game models (\cite{sfe}). In contrast to SFE, we consider pay-as-bid remuneration (as in \cite{sfe_pab} and \cite{genc}) and we do not consider uncertainty in the demand.	A crucial issue of the supply function model is that the strategy set is an infinite dimensional space. We first show that, if the strategy space includes all non-decreasing continuous functions that are zero in zero, pure strategy Nash equilibria do not exist in general. This observation stands in stark contrast to the findings of standard SFE game models, wherein it was noted that in the absence of uncertainty, the number of Nash equilibria becomes infinite. This fundamental disparity arises due to the use of pay-as-bid remuneration, where the trajectory of the supply function plays a crucial role.

%It is worth noting that pay-as-bid auctions have received relatively less attention in the literature compared to uniform-price auctions, particularly within the context of supply function models, despite having several applications in electricity markets, as for instance in ancillary services markets. The limited existing results establish existence of Nash equilibria under certain conditions on the demand distribution and involve solving a set of differential equations. %In contrast, our fundamental finding is that, by limiting the strategy space to $K$-Lipschitz supply functions, pure-strategy Nash equilibria exist and they can be represented as piece-wise affine functions with slope $K$.  More precisely, Nash equilibria of the $\lip$-PAB auction game can be fully characterized starting from Nash equilibria of a parameterized game (the $\aff$-PAB auction game) with continuous scalar actions. This result paves the way to our comprehensive analysis of the game. To the best of our knowledge, there are no similar findings available in existing literature.

Our main result is to prove that, by restricting the strategy space to Lipschitz-continuous supply functions, existence of pure-strategy Nash equilibria is guaranteed under
standard concavity assumptions and a characterization can be given. Indeed, under this assumption, the complexity of the problem can be dramatically reduced, since best responses can be characterized as piece-wise affine functions with slope equal to the maximum allowed Lipschitz constant. % $K$.
The main properties of the pay-as-bid auction game with Lipschitz-continuous %$K$-Lipschitz
supply functions, that we call $\lip$-pay-as-bid (PAB) auction game, can be then investigated through a subgame, the activation price  game, with a parameterized action space. %$\aff$-PAB auction game with continuous scalar actions. 
This result  yields a fundamental simplification and paves the way to a thorough analysis of our model. %It distinctly differs from the SFE game models approach in that it operates deterministically.	
It is essential to note that we do not assume a parametric model outright; instead, we have discovered and established the optimality of a parametric game within the domain of all Lipschitz-continuous supply functions. %Indeed, Nash equilibria of the $K$-L PAB  auction game can be fully characterized in terms of Nash equilibria of the $\aff$-PAB auction game. We then study Nash equilibria of the $\aff$-PAB auction game, proving in particular that utilities are quasi-concave in the strategies of the agents. Existence of Nash equilibria is then guaranteed by classical results.
To the best of our knowledge, except the preliminary results in \cite{vanelli2023nash}, there are no similar findings available in existing literature.

Our second main contribution is on $\lip$-PAB auction games with affine demand. In this case, we prove uniqueness of Nash equilibria % of the $\lip$-PAB auction game 
up to the market-clearing price when the asymmetric marginal costs are homogeneous in zero. %of the $\aff$-PAB auction game. As a consequence, all Nash equilibria of the $\lip$-PAB auction game give rise to the same market-clearing price and to the same utilities for all the agents. 
Furthermore, for quadratic costs, %we give a complete characterization of all Nash equilibria of the 
we derive a closed-form expression for the unique Nash equilibrium %$\lip$-PAB auction game
and we compute its limit as $K$ tends towards infinity. %investigate the equilibrium outcome in the scenario where $K$ tends towards infinity. 
This case is of utmost significance as increasing the value of $K$ widens the strategy space of Lipschitz supply functions, thereby relaxing our initial assumption. %	We first observe that, 
Indeed, the equilibrium strategies converge towards step functions that are zero up to the market-clearing price, gradually approximating the	behavior observed when studying the general case. %Then, we derive a concise closed-form expression for the unique market-clearing price and compute the sold quantities and the utilities for every firm. 
%Based on this closed form expression, w
%We show that 
In the limit, the pay-as-bid auction game achieves perfect competition and efficient allocation (\cite{dubey2009perfect}) %, ranking intermediate between the lowest Betrand equilibrium and Cournot oligopoly models
and gives a lower market-clearing price than Supply Function equilibria. %Our results can be further interpreted as a regularization of the Bertrand competition.
%Our second main contribution is on $\lip$-PAB auction games with affine demand and quadratic costs. In this case, we prove uniqueness of Nash equilibria of the $\aff$-PAB auction game and we give a complete characterization of all Nash equilibria of the $\lip$-PAB auction game. There is an infinite number of Nash equilibria but they all give rise to the same market-clearing price and to the same utilities for all the agents. Finally, we investigate the equilibrium outcome of the $\lip$-PAB  auction game in the scenario where $K$ tends towards infinity. This case is of utmost significance as increasing the value of $K$ widens the strategy space of $K$-Lipschitz supply functions, thereby relaxing our initial assumption.	We first observe that, as $K$ approaches infinity, the equilibrium strategies converge towards step functions that are zero up to the market-clearing price, gradually approximating the	behavior observed when studying the general case. Then, we derive a concise closed-form expression for the unique market-clearing price and compute the sold quantities and the utilities for every firm. Based on this closed form expression, we show that the pay-as-bid auction game ranks intermediate between Betrand and Cournot oligopoly models and gives a lower market-clearing price than Supply Function equilibria.

\subsection{Related literature}
{%From a game-theoretic point of view, appropriate models for studying wholesale markets for electricity are Cournot-based models \cite{bimpikis2019cournot, cai2019role,massai2022equilibria}, auction models \cite{klemperer1999auction, jackson2014mechanism, karaca2017game, karaca2019designing} and Supply Function Equilibrium (SFE) game models  \cite{sfe, green, david, baldick, anderson, linear_sfe,holmberg2010,correa2014pricing}. 	With this last approach, instead of setting their price bids (Bertrand competition, \cite{bertrand1883review,microeconomics}) or quantities (Cournot competition,  \cite{cournot1838recherches,microeconomics}),	firms bid a supply function relating the price to the quantity. 	SFE models were first introduced in \cite{sfe}, and then applied to electricity markets in \cite{green}. The authors observed that, in the absence of uncertainty, there exists an infinite number of Nash equilibria. However, when the demand is uncertain and a firm faces a range of possible residual demand curves, it generally expects higher profits by expressing its decisions in terms of a supply function that specifies the price at which it offers different quantities to the market. 	Unlike traditional equilibrium models that rely on solving algebraic equations, calculating an SFE involves solving a set of differential equations. This poses limitations on the numerical tractability of SFE models and, therefore, researchers have focused on the linear SFE model, in which the demand is affine, marginal costs are linear or affine	and SFE can be obtained in terms of linear or affine	supply functions \cite{linear_sfe}.	Another way to reduce the number of Nash equilibria is to consider SFE parametrized models \cite{correa2014pricing,johari2011parameterized}. Comparisons between SFE models and Cournot-based models can be found in \cite{saglam2018ranking, willems2009cournot, delbono2016ranking}
	
	%	From a game-theoretic point of view, appropriate models for studying wholesale markets for electricity are Cournot-based models \cite{bimpikis2019cournot, cai2019role,massai2022equilibria}, auction models \cite{klemperer1999auction, jackson2014mechanism, karaca2017game, karaca2019designing} and Supply Function Equilibrium (SFE) game models. % \cite{sfe, green, david, baldick, anderson, linear_sfe,holmberg2010,correa2014pricing}. 
	From a game-theoretic point of view, two appropriate models for studying wholesale markets for electricity are Cournot-based models and Supply Function Equilibria (SFE) game models (\cite{ventosa}). In Cournot-based models (\cite{cournot1838recherches}) firms set the quantity they want to produce. The strength of Cournot-based models is their simplicity: under standard assumption on costs and demand, Cournot equilibria exist and can be found by solving a set of algebraic equations (\cite{microeconomics}). Furthermore, they can be generalized to network frameworks (\cite{bimpikis2019cournot, cai2019role,como2024equilibria}). For these reasons, Cournot-based models are widely used in the applications. On the other hand, they have limited predictive capacity and they cannot for instance be employed to model the pay-as-bid remuneration. 
	
	Supply Function Equilibria game models represent a richer model. In these models, instead of setting the price, as in Bertrand competitions (\cite{bertrand1883review,microeconomics}), or quantities, as in Cournot models,
	firms bid a supply function relating the price to the quantity. 
	SFE models were first introduced in \cite{sfe}, and then applied to electricity markets in \cite{green}. The authors observed that, in the absence of uncertainty, there exists an infinite number of Nash equilibria. However, when the demand is uncertain and a firm faces a range of possible residual demand curves, ex-post Nash equilibria can be characterized through a unique supply function following ex-post optimal points. %it generally expects higher profits by expressing its decisions in terms of a supply function that specifies the price at which it offers different quantities to the market. 
	Unlike Cournot-based models, calculating an SFE involves solving a set of differential equations. This poses limitations on the numerical tractability of SFE models and, therefore, applications mostly consider linear SFE models %, in which the demand and marginal costs are affine and SFE can be obtained in terms of affine supply functions 
	(\cite{linear_sfe}).
	Another way to reduce the number of Nash equilibria and simplify the model is to consider SFE parametrized models (\cite{johari2011parameterized,correa2014pricing, li2015demand}). Comparisons between SFE models and Cournot-based models can be found in \cite{willems2009cournot, saglam2018ranking, delbono2016ranking}.

	While there is a vast amount of literature directed to the study of SFE in uniform-price auctions  (e.g., \cite{baldick, anderson2002optimal, anderson, holmberg2010, wilson2008supply, anderson2012asymmetric}), the behavior of SFE models is less clear when discriminatory prices are considered. 
	In \cite{sfe_pab}, a SFE game model with uncertain perfectly inelastic demand, symmetric marginal costs and pay-as-bid remuneration is proposed and analyzed. Existence of an equilibrium is ensured if the hazard rate is monotonically decreasing and sellers have non-decreasing marginal costs. They also conclude that average prices are weakly lower in
	the discriminatory auction compared to uniform-price auctions. The results are generalized in \cite{anderson2013mixed} where mixed-strategies Nash equilibria are considered and characterized. 	In \cite{genc}, SFE in discriminatory auctions are compared to uniform-price auctions when suppliers have capacity constraints. The authors formulate a supply function equilibrium model with inelastic time-varying demand and with constant symmetric marginal costs %. They 
	and show that payments made to the suppliers %in the unique equilibrium of the discriminatory auction
	can be less than those in the uniform-price auction, depending on which uniform-price auction equilibrium is selected. In \cite{wilson2008supply}, SFE are characterized in a constrained transmission system for both nodal prices and pay-as-bid. 
	It is worth noting that these models assume perfectly inelastic demand and symmetric costs. To the best of our knowledge, there is a noticeable lack of models that incorporate supply functions and discriminatory pricing %and adopt
	adopting more general assumptions. Additionally, finding SFE in these models often involves solving differential equations, making them less practical for real-world applications.
	
	Alongside their application in electricity markets, pay-as-bid auctions have been studied in auction theory as alternative mechanisms to uniform-price auctions for selling treasury securities and commodities. In this context, bidders are typically assumed to be symmetric, while various forms of uncertainty and information are analyzed to evaluate their impact on bidder behavior, market efficiency, and revenue outcomes. In \cite{pycia2021auctions, pycia2025case}, the authors study pure-strategy Bayesian Nash equilibria in pay-as-bid auctions, demonstrating uniqueness and deriving a tractable representation of the equilibrium bids. Differently from uniform-price auctions, the authors establish the optimality of supply transparency and full disclosure in pay-as-bid auctions. They further show that pay-as-bid auctions are revenue dominant and potentially welfare dominant. %, and explore conditions for welfare equivalency between the two formats. % the pay-as-bid auction is studied in the context of treasury securities and commodities. Theù
	%	the authors study Bayesian Nash equilibrium in pay-as-bid auctions with and find 	uniqueness of the equilibrium and derive a tractable representation of equilibrium bids. Differently from uniform-price, they prove optimality of  supply transparency and full disclosure. They also show that pay-as-bid is revenue dominant and might be welfare dominant and discuss welfare equivalency. %; and, under reasonable assumptions, % commonly imposed in empirical work, 
	%the two formats are revenue and welfare equivalent. 
	In \cite{willems2022bidding}, a perfect competition model with a continuum of generation technologies and uncertain elastic demand is developed. %They compare pay-as-bid and uniform-price auctions to investigate their long-term efficiency. 
	Their findings show that pay-as-bid auctions generally lead to more competitive behavior and lower prices compared to uniform-price auctions. %The aggregate capacity remains unchanged under reasonable assumptions.
	However, when including a continuum of generation technologies, the study reveals that pay-as-bid auctions result in an inefficient generation mix. %This inefficiency occurs due to consumers' willingness to pay exceeding the marginal cost, leading to distorted long-run investment incentives for producers.
	%	The uniform price auction and the pay-as-bid auction are studied also with discrete-step supply offers. %in the context of auction theory. 
	%They	extend the work of  \cite{fabra2011market} and find that pay-as-bid auctions generally	lead to more competitive behaviors and lower prices than uniform-price auctions.	The aggregate capacity stays the same under reasonable assumptions. %considers a duopoly model, where the two suppliers submit a single price offer for their entire capacity. These two papers end up with the	same conclusion that average prices in pay-as-bid auctions are lower than those in uniform-price auctions and consumer surplus is higher. 
	A comprehensive analysis of the two auction formats is presented also in \cite{federico2003bidding}, where the authors 	analyze the two auction rules under two polar market structures (perfectly competitive and monopolistic supply), with demand uncertainty, and find that under perfect competition there is a trade-off between efficiency and the level of consumer surplus. 	In \cite{fabra, fabra2011market},	uniform auctions result in higher average prices than discriminatory auctions, but the ranking in terms of productive efficiency is ambiguous.
	
	A related problem has been studied in \cite{son2004short}, where a two-player static auction game is considered with a big player with market power and small player. The authors observe that, both under elastic and inelastic demand, the total payment of consumers would be smaller under pay-as-bid pricing for the two-player game; however, under pay-as-bid the equilibrium is a mixed-strategy equilibrium. 	In \cite{aussel2017nash} and \cite{aussel2017nash2},  Nash equilibria are fully characterized for a model of a pay-as-bid electricity market based on a multi-leader-common-follower model of a pay-as-bid
	electricity market in which the producers provide the regulator with either
	linear or quadratic bids.  %The authors describe necessary and sufficient conditions for their existence as well as providing explicit formulas of such equilibria in the market.  %, which is presumably undesirable from an operational point of view.
	In \cite{karaca2017game} and \cite{karaca2019designing}, the authors propose an alternative mechanism based on	Vickrey–Clarke–Groves (VCG) mechanism to obtain truthful bidding in pay-as-bid auctions.

	%%%%% verifica se era interessante%%%%
	%	In \cite{xiong2004multi} the two auction formats are compared using a multi-agent approach, where each adaptive agent represents a generator who develops bid prices based on Q-Learning algorithm. In the experimental results, the authors observe that the pay-as-bid auction indeed results in lower market prices and price volatility, as expected.	In \cite{swider2007bidding}, a methodology for supporting the trading decisions of producers in a pay-as-bid auction for power systems reserve is proposed.	One bidder is assumed to behave strategically and the behavior of the remaining is summarized in a probability distribution of the market price and a reaction function to price dumping by the strategic bidder.  	Taking the characteristics of Germany markets into account, the methodology is applied using exemplary data and it is shown that the methodology helps to manage existing price uncertainties.
	
	\subsection{Structure of the paper}
	The rest of the paper is structured as follows. In Section \ref{sec:model}, we introduce the pay-as-bid auction game and demonstrate that, in its general form, the game does not admit Nash equilibria. To address this, we define, in Section \ref{sec:analysis}, the $\lip$-pay-as-bid (PAB) auction game, which forms the foundation for our subsequent analysis. We then focus on the primary result: the existence and characterization of Nash equilibria for the $\lip$-PAB auction game. 	Section \ref{sec:uniqueness} is dedicated to the study of the $\lip$-PAB auction game under affine demand, showing uniqueness of Nash equilibria up to the market-clearing price when the marginal costs are homogeneous in zero. In Section \ref{sec:ld_qc}, we consider quadratic costs and we derive a closed-form expression for the unique Nash equilibrium. We then compute the limit as $K$ approaches infinity, showing that it achieves perfect competition with efficient allocation. We conclude with Section \ref{sec:conclusions}, summarizing the results and discussing future directions. %Because of page limitations, proofs are moved to the Appendix.
	
	%	\end{comment}

\section{Model and problem formulation}\label{sec:model}
%\subsection{Problem setting}
In this section, we introduce the problem setting, formally define the pay-as-bid auction game, and discuss the choice of the supply function space.

\subsection{Pay-as-bid auction games}\label{sec:PAB-games}
Consider a non-empty finite set $\N=\{1,\dots, n\}$ of producers of the same homogeneous good. 
Every producer $i$ in $\N$ is characterized by a \textit{production cost function} 
$$C_i:\R_+\to\R_+\,,$$ 
returning the cost $C_i(q_i)$ incurred when producing a quantity $q_i\ge0$ of the considered good. 
We shall assume that the production cost function $C_i$ of every producer $i$ in $\mc N$ is twice continuously differentiable, non-decreasing, and convex. 
%We indicate with $C=(C_1,\dots , C_n)$ the vector of all the production cost functions.
We model the consumption market by an aggregate \textit{demand function} $$D:\R_+\to\R\,,$$ returning the quantity  $D(p)$ that consumers are willing to buy at a (maximum) unit price $p$. We assume that the aggregate demand function $D$ is twice continuously differentiable, strictly decreasing, such that 
$D(0)>0$, that the maximum price $$\hat p:=\sup\{p\ge0:\,D(p)>0\}<+\infty\,,$$
is finite, 
and that $D(p)$ is concave on the interval $[0,\hat p]$.
%there exists a maximum price $\hat{p}=\sup\{p\ge0:D(p)\}>0$ such that $D(\hat{p})=0$ and that
%\end{itemize}

We assume that the producers compete strategically by bidding continuous non-decreasing \textit{supply functions} $$S_i:[0, \hat{p}]\to\R_+\,,\qquad i\in\mc N\,,$$ returning the quantities of good $S_{i}(p)$ that they are willing to sell at (minimum) unit price $p$. 
In particular, we introduce the space of continuous non-decreasing supply functions
\be\label{def-supply}
\begin{aligned}
	\cmon=\{F&
	: [0, \hat{p}]\to\R_+\,|\\&
	\, F\;\hbox{continuous non-decreasing}, F(0)=0\}\,,
\end{aligned}
\ee
%the set of non-decreasing continuous functions $F: [0, \hat{p}]\to\R_+$ such that $F(0)=0$. 
and refer to the supply function $S_i$ in $\cmon$ chosen by producer $i$ in $\mc N$ as its strategy.
%and refer to elements in $\cmon$ as to and they represent the strategies played by the various agents. Specifically, we indicate with $S_i\in \cmon$ the strategy of agent $i$: the value $q=S_i(p)$ represents the quantity that the agent is willing to produce at (minimum) unit price $p$. 
We stack the strategies of all the producers in a tuple of supply functions $S=(S_1,S_2,\dots,S_n)$ to be referred as a strategy profile or configuration. Following a standard notational convention in game theory, for a configuration $S$ and a producer $i$ in $\mc N$, the strategy profile of all producers but $i$ is denoted by $S_{-i} =\{S_j\}_{j \neq i}$. We shall always identify $S=(S_i, S_{-i})$.

Given a demand function $D$ and a configuration $S$, the \textit{market-clearing price} is determined as the price that makes the aggregate demand and total supply match. Specifically, the market clearing price is defined as the unique value $p^*$ in the interval $[0, \hat{p}]$  satisfying\footnote{Existence and uniqueness of the market clearing price are direct consequences of the assumptions made, specifically the fact that the demand function $D$ is continuous  strictly decreasing and $D(0)>0$, and the supply functions $S_i$ are continuous non-decreasing and $S_i(0)=0$.}
\begin{equation} \label{eq:equilibrium}
	D(p^*)= \sum_{i=1}^nS_i(p^*)\,.
\end{equation}  	
The quantity of good sold by every producer can then be computed in terms of the market-clearing price as follows
$$q^*_i = S_i(p^*)\,,\qquad i\in\mc N\,.$$ 
We shall often write $p^*=p^*(S)$ to emphasize the dependence of the market-clearing price on the configuration $S$. 
An example of market-clearing price is depicted in the left-hand side of Figure \ref{fig:paba}.
}

We introduce a class of strategic games based on the pay-as-bid remuneration, where every producer is to bid a supply function chosen from a non-empty subset $\mc A$ of the space $\cmon$:  the reason for considering restrictions in the supply function set will become clear in Section \ref{sec:Nash}.
\begin{definition}[$\mathcal{A}$-pay-as-bid auction game]\label{def:PAB} Consider a set of producers $\mathcal N$, production cost functions $C_i$ for every $i$ in $\mc N$, and  demand function $D$, as described above. For a non-empty subset of supply functions $\mathcal A \subseteq \cmon$, 
the $\mathcal A$-\textit{pay-as-bid} ($\mathcal A$-PAB) \textit{auction game} is a strategic game with player set $\N$, strategy space $\mc A$, and utility functions $u_i:\mc A^{\mc N}\to\R$ defined by
\begin{equation}\label{eq:utility_pab}
	%%	\begin{aligned}
		u_i(S) :=  p^* S_i(p^*) - \int_0^{p^*}S _i(p)\dd p - C_i(S_i(p^*))%\quad i\in\mc N\,,%u_i(S_i, S_{-i}) :=  p^* S_i(p^*) - \int_0^{p^*}S _i(p)\dd p - C_i(S_i(p^*))\,, %S_i, S_{-i}
		%	\end{aligned}
\end{equation}	
%for every producer $i$ in $\N$, 
for $i$ in $\mc N$, where $p^*:=p^*(S)$ is the market-clearing price defined by \eqref{eq:equilibrium}. The strategy profile space of the $\mathcal A$-PAB auction game is denoted by $\mc S=\mc A^{\mc N}$.
\end{definition}

%The game defined above will be denoted as $\mathcal U_{(\mathcal N, C, D, \mathcal A)}$. 
%Throughout, we shall denote the PAB auction game with $\mathcal U (\mathcal N, \mathcal A, \{u_i\}_{i\in \mathcal N})$.	
\begin{figure}
\centering
\begin{tikzpicture}[scale=0.22]
	\draw[->] (-0.5, 0) -- (11, 0) node[right] {$p$};
	\draw[->] (0, -0.5) -- (0, 11) node[above] {$q$}; 
	\draw[scale=1, domain=0:10, smooth, variable=\x] plot ({\x}, {10-(1/10)*\x^2});
	\draw[scale=1, domain=0:10, smooth, variable=\x, color=blue] plot ({\x}, {max(-1/4+(1/16)*\x^2,0)});
	\draw[scale=1, domain=0:8, smooth, variable=\x, color=green] plot ({\x}, {max(-1/8+(1/8)*\x^2,0)});
	\draw[scale=1, domain=0:7, smooth, variable=\x, color=red] plot ({\x}, {max(-(1/4+1/8)+ (1/16+1/8)*\x^2,0)});
	\node[red] at (6, 6.375) {\textbullet};
	\node[green] at (6, 4.375) {\textbullet};
	\node[blue] at (6, 2) {\textbullet};
	\draw[scale=1, domain=0:6.375, dashed, variable=\y, color=red] plot ({6},{\y});
	\draw[scale=1, domain=0:6, dashed, variable=\x, color=green] plot ({\x},{4.375});
	\draw[scale=1, domain=0:6, dashed, variable=\x, color=blue] plot ({\x},{2});
	\node[blue] at (-2.7,2) {$S_2(p^*)$};
	\node[green] at (-2.7,4.375) {$S_1(p^*)$};
	\node at (6,-1) {$p^*$};%{$S_1+S_2$};	
	\node at (10,-1) {$\hat{p}$};
	\node at (4,10) {$D$};%{$S_1$};
	\node[blue] at (10,4) {$S_2$};%{$S_1$};
	\node[green] at (8.3,6) {$S_1$};%{$S_1$};
	\node[red] at (9.5,9){$S_1+S_2$};
\end{tikzpicture}
\begin{tikzpicture}[scale=0.22]
	\draw[name path=A,scale=1, domain=0:8, smooth, variable=\x, color=green] plot ({\x}, {max(-1/8+(1/8)*\x^2,0)});
	\draw[name path = B,scale=1, domain=0:6, dashed, variable=\x, color=green] plot ({\x},{4.375});
	\tikzfillbetween[of=A and B]{green!10};
	
	\draw[->] (-0.5, 0) -- (11, 0) node[right] {$p$};
	\draw[->] (0, -0.5) -- (0, 11) node[above] {$q$}; 
	
	\node[green] at (6, 4.375) {\textbullet};
	\draw[scale=1, domain=0:4.375, dashed, variable=\y] plot ({6},{\y});
	
	\node[green] at (-2.7,4.375) {$S_1(p^*)$};
	\node at (6,-1) {$p^*$};%{$S_1+S_2$};	
	\node[green] at (8.3,6) {$S_1$};%{$S_1$};
	
\end{tikzpicture}
\caption{The market-clearing price (on the left) and the pay-as-bid remuneration (on the right).}
\label{fig:paba}
\end{figure}
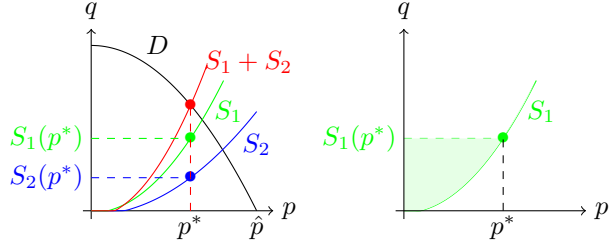  
%\end{definition}
%\begin{remark}
The utility functions \eqref{eq:utility_pab} of the $\mc A$-PAB auction game can be interpreted as follows. Producer $i$ sells the total quantity $S_i(p^*)$ at the bid price and its utility is given by the difference between its total revenue and its production cost. The first two addends in the right-hand side of equation \eqref{eq:utility_pab} quantify producer $i$'s revenue with the pay-as-bid remuneration. Indeed, notice that, if the supply function $S_i$ of producer $i$ is differentiable, then
$$ p^* S_i(p^*) - \int_0^{p^*}S _i(p)\dd p=	\int_0^{p^*}p\,S'_i(p)\dd p  \,,
$$
i.e., the first two addends in  the right-hand side of \eqref{eq:utility_pab} coincide with the aggregate product of the marginal supply times the price, which is the revenue of producer $i$. 
Also, if the supply function $S_i$ is invertible, then its inverse function $P_i(q):=S^{-1}_i(q)$ is referred to as the price function and returns the marginal price $P_i(q)$ at which producer $i$ is willing to sell the quantity of good $q\ge0$, so that  its total revenue in the PAB auction game is
$$p^* S_i(p^*) - \int_0^{p^*}S _i(p)\dd p= 
\int_0^{S_i(p^*)}S_{i}^{-1}(q)\dd q 
=   \int_0^{S_i(p^*)}P_i(q)\dd q \,.
$$
%	the total revenue in the PAB auction game equals the integral from $0$ to $S_i(p^*)$ of the inverse of $S_i$, that is, the price function $p_i(q):=S^{-1}_i(q)$ of agent $i$. The price function assigns to each quantity the marginal price at which agents are willing to sell such quantity for. Therefore, its integral from $0$ to $q^*_i$ determines the total pay-as-bid remuneration for agent $i$ for a quantity $q_i^*$.
Observe that formula \eqref{eq:utility_pab} for the producer's utility function in the PAB  auction game does not rely on any assumptions of differentiability or invertibility of  the supply function $S_i$. 

An example of remuneration of the PAB auction game is depicted on the right of Figure \ref{fig:paba}. %The pay-as-bid remuneration: 
When the supply function is $S_1(p)$ and the market-clearing price is $p^*$, the total revenue for producer $1$ coincides with the green area (the utility is then given by revenue minus the production cost). 
%\end{remark}
\begin{remark}
%The total remuneration in uniform-price auctions is calculated as $S_i(p^)p^$. The distinction between the auction formats primarily lies in the integral term, which considers the path of the supply function. This difference is crucial as it allows for a comprehensive consideration of the supply function's trajectory in the pay-as-bid auction.
%{\color{red}Recall} that
In contrast to the PAB auctions defined above, the total remuneration in uniform-price auctions is computed as $p^*S_i(p^*)$ (see \cite{sfe}). Therefore, the difference between the two auction formats lies in the integral term. This difference is fundamental, since in the PAB remuneration the whole trajectory of the supply function $S_i(p)$,  for $0\le p\le p^*$,  is taken into account, rather than only its value $S_i(p^*)$ at the market clearing price $p^*$. Pay-as-bid remunerations have been considered in fewer works featuring supply functions, e.g., \cite{sfe_pab, genc, pycia2020auctions, willems2022bidding}.
\end{remark}
%\begin{remark}
%Observe that, if we consider the utility with 
%\end{remark}

\subsection{Best responses and Nash equilibria}\label{sec:Nash}
In the following sections, we shall focus on existence, uniqueness, and characterization of Nash equilibria of  $\mc A$-PAB auction games. A strategy profile $S$ is a  \textit{Nash equilibrium} for an $\mc A$-PAB auction game if the supply function $S_i$ of every producer $i$ in $\mc N$ maximizes its utility given the other producers' strategy profile $S_{-i}$. 
Precisely, we have the following definition. 
\begin{definition}
[Best responses and Nash equilibria]\label{defBR+Nash} 
For a non-empty subset of supply functions $\mathcal A \subseteq \cmon$, consider the $\mc A$-PAB auction game with given 
set of producers $\mathcal N$, production cost functions $C_i$ for every $i$ in $\mc N$, and  demand function $D$. 
Then: 
\begin{enumerate}
	\item[(i)]
	the \textit{best response} of a producer $i$ in $\mc N$ to a strategy profile $S_{-i}$ in $\A^{\N\setminus \{i\}}$ is the set   %Let $S_j\in \mathcal A$. 
	\begin{equation}\label{eq:def_br}
		\mathcal B_i(S_{-i}) = \argmax_{S_i \in \mathcal A}\  u_i(S_i, S_{-i})\,;
	\end{equation}
	\item[(ii)]
	a strategy profile $S^*$ in $\mc S$ is a (pure strategy) Nash equilibrium if 
	\be\label{eq:Nash}S^*_i\in\mathcal B_i(S^*_{-i})\,,\qquad \forall i\in\N\,.\ee
\end{enumerate}
\end{definition}

Our first observation is that, when supply functions can be generic non-increasing continuous functions, the $\cmon$-PAB auction game may not admit any Nash equilibria. % in general. %Let $S^0 \equiv 0$ be the supply function that is zero in all the interval $[0, \hat{p}]$. 
In fact, the following result states that, in the $\cmon$-PAB  auction game, the best-response  of a producer $i$ in $ \mathcal N$ is either the constant $0$ supply function or it does not exist. Then, the only possible Nash equilibrium for the $\cmon$-PAB  auction game is the configuration where all producers bid the constant $0$ supply function and  no good is sold on the market. 
%Let $S^0 \equiv 0$ be the supply function that is zero in all the interval $[0, \hat{p}]$. We have the following result.  %and existence of Nash equilibria is not guaranteed. 
%Indeed, if we denote with $S^0 \equiv 0$ the supply function that is zero in all the interval $[0, \hat{p}]$, we can prove that the set of Nash equilibria is either empty or equal to $S^*=[S^0, \dots, S^0]$. More precisely, it holds that, for every $i\in \mathcal N$ and other agents' strategies $S_{-i}$, the best-response is either $S^0$ or does not exist. %We remark that the case when $S^*=[S^0, \dots, S^0]$ is a Nash equilibrium of the game is not particularly interesting, as all agents are selling a zero amount of quantity.
\begin{proposition}\label{pr:no_br}
Consider the $\cmon$-PAB auction game 
with set of producers $\mathcal N$, production cost functions $C_i$ for every $i$ in $\mc N$, and  demand function $D$. 
Then, for every producer $i$ in $\N$ and strategy profile $S_{-i}$, either $\mathcal B_i(S_{-i}) = \emptyset$ or $\mathcal B_i(S_{-i}) =\{0\}$.
\end{proposition}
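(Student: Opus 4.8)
The plan is to fix a producer $i\in\mc N$ and a strategy profile $S_{-i}$ of the other producers, and to recast producer $i$'s maximization through the \emph{residual demand} $R(p):=D(p)-\sum_{j\neq i}S_j(p)$. By the standing assumptions $R$ is continuous and strictly decreasing, with $R(0)=D(0)>0$ and $R(\hat p)=-\sum_{j\neq i}S_j(\hat p)\le 0$ (since $D(\hat p)=0$), so it has a unique zero $\bar p\in[0,\hat p]$. For any $S_i\in\cmon$ the market-clearing price $p^*=p^*(S_i,S_{-i})$, which exists and is unique, is characterized by $S_i(p^*)=R(p^*)$, and, because $S_i\ge 0$, this forces $p^*\le\bar p$. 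Writing $q^*:=S_i(p^*)=R(p^*)\ge 0$, the utility becomes
\[
u_i(S_i,S_{-i})=p^*q^*-\int_0^{p^*}S_i(p)\dd p-C_i(q^*)\ \le\ p^*q^*-C_i(q^*),
\]
using $S_i\ge0$ once more.

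The crux is to rule out any best response with $q^*>0$. If $q^*=S_i(p^*)>0$, continuity of $S_i$ forces $\int_0^{p^*}S_i(p)\dd p>0$, so the displayed inequality is strict, say $u_i(S_i,S_{-i})\le p^*q^*-C_i(q^*)-\delta$ with $\delta>0$. I would then exhibit an admissible strategy with the \emph{same} market-clearing price but a vanishing integral term: for $\eps\in(0,p^*)$ let $\tilde S^\eps$ equal $0$ on $[0,p^*-\eps]$, be affine from $0$ to $q^*$ on $[p^*-\eps,p^*]$, and equal $q^*$ on $[p^*,\hat p]$. Then $\tilde S^\eps\in\cmon$; since $\tilde S^\eps(p^*)=q^*=R(p^*)$ while $\tilde S^\eps-R$ is strictly increasing, the market-clearing price of $(\tilde S^\eps,S_{-i})$ is again $p^*$; and $u_i(\tilde S^\eps,S_{-i})=p^*q^*-\tfrac12 q^*\eps-C_i(q^*)$. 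Taking $\eps<2\delta/q^*$ gives $u_i(\tilde S^\eps,S_{-i})>u_i(S_i,S_{-i})$, so $S_i\notin\mathcal B_i(S_{-i})$.

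It follows that every best response has $q^*=0$, equivalently $p^*=\bar p$, and then $u_i(S_i,S_{-i})=-\int_0^{\bar p}S_i(p)\dd p-C_i(0)\le-C_i(0)$, with equality if and only if $S_i\equiv0$ on $[0,\bar p]$; in particular the constant $0$ function attains the value $-C_i(0)$, its market-clearing price being $\bar p$. I then split into two cases. If $u_i(S_i,S_{-i})\le-C_i(0)$ for every $S_i\in\cmon$, the constant $0$ function is a best response; since the value of a best response on $(\bar p,\hat p]$ affects neither the payoff nor the market-clearing price it may be taken to be $0$, so $\mathcal B_i(S_{-i})=\{0\}$. If instead some $S_i'$ satisfies $u_i(S_i',S_{-i})>-C_i(0)$, then no strategy with $q^*=0$ (whose payoff is at most $-C_i(0)$) can be a best response, and by the previous paragraph neither can any strategy with $q^*>0$; hence $\mathcal B_i(S_{-i})=\emptyset$.

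The only step requiring genuine care is the middle one: the infimum of $\int_0^{p^*}S_i(p)\dd p$ over continuous non-decreasing $S_i$ with $S_i(0)=0$ and $S_i(p^*)=q^*>0$ is $0$ but is not attained, which is precisely why a strategy that sells a positive quantity at the clearing price can always be marginally improved — by shifting its supply toward the clearing price — and hence is never a best response. The residual-demand reformulation, the uniqueness of $p^*$, and the final case analysis are then routine.
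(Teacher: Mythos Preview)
Your argument is correct and reaches the same conclusion, but by a different construction than the paper. The paper's proof is a one-liner: given any $S_i\not\equiv0$ it sets $\tilde S_i(p):=S_i(p^2/p^*)$, which preserves the clearing price (since $\tilde S_i(p^*)=S_i(p^*)$) while strictly reducing $\int_0^{p^*}S_i$, so no nonzero $S_i$ can be a best response. You instead work through the residual demand $R$, bound the utility by $p^*q^*-C_i(q^*)$, and use a piecewise-affine ramp $\tilde S^\eps$ to strictly improve any strategy with $q^*>0$, followed by a case split on whether $\sup u_i\le -C_i(0)$. The paper's substitution is slicker and avoids your final case analysis; your version makes the underlying mechanism (the infimum of the integral term is $0$ but not attained) fully explicit, and incidentally avoids a small domain issue in the paper's $\tilde S_i$ for $p>p^*$. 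Both proofs share the same harmless imprecision at the end: in the case $\sup u_i\le -C_i(0)$, any $S_i$ vanishing on $[0,\bar p]$ is also a best response, so literally $\mathcal B_i(S_{-i})\supsetneq\{0\}$ unless $\bar p=\hat p$; your remark that behavior above $\bar p$ is immaterial is exactly the intended reading.
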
 
\begin{proof}%[Proof. (Proposition \ref{pr:no_br})] 
For a strategy profile $S$ in $\mc S$ and a producer $i$ such that $S_i\not\equiv0$, we shall prove that there exists an alternative supply function $\tilde{S}_i$ in $\cmon$ yielding  the same market-clearing price and a strictly larger utility. Formally, let $p^*=p^*(S_i, S_{-i})>0$ be the market-clearing price in configuration $S$ and define $$\tilde{S}_i(p) := S_i\left({p^2}/{p^*}\right)\,,\qquad \forall p\ge0\,.$$ Observe that $\tilde{S}_i(0)=S_i(0)$, $\tilde{S}_i(p^*)=S_i(p^*)$ and $S_i(p)\geq \tilde{S}_i(p)$ for every $p $ in $[0, p^*]$. In fact, we have that $S_i(p)= \tilde{S}_i(p)$ for every $p$ in $[0, p^*]$ if and only if $S_i\equiv0$. Therefore, if $S_i(p)\neq 0$ for some $p$ in $(0,\hat{p})$, then there exists $p_0$ in $(0, p^*)$ such that $S_i(p_0)> \tilde{S}_i(p_0)$. By continuity, this implies that there exists $\epsilon >0$ such that $S_i(p)> \tilde{S}_i(p)$ for every $p$ in $(p_0-\epsilon, p_0+\epsilon)$.  We then obtain that
$$
\int_0^{p^*}\tilde{S} _i(p)\dd p<	\int_0^{p^*}S _i(p)\dd p\,,
$$
while the other terms in the utility in \eqref{eq:utility_pab} remain constant. Consequently, we find that $u_i(\tilde{S}_i, S_{-i})> u_i(S_i, S_{-i})\,.$
Then, the best response of producer $i$ is either the constant $0$ supply function or it does not exist. %This concludes the proof.
\end{proof}
\begin{remark}\label{rem:step_f}
The main idea behind the proof of Proposition \ref{pr:no_br} is that, for every feasible supply function $S_i\not\equiv 0$ in $\cmon$, there exists another feasible supply function $\tilde{S}_i$ in $\cmon$ yielding to the same market-clearing price and a strictly higher utility. This is due to the non-compactness of the strategy space $\cmon$. The proof %of Proposition \ref{pr:no_br}
suggests that best responses would exist if one could use step functions. However, enlarging the strategy space to discontinuous functions would lead to a number of different technical difficulties as, for instance, multiple solutions of the market-clearing price equation \eqref{eq:equilibrium}. % existence of a unique market-clearing price $p^*$ as the unique solution of \eqref{eq:equilibrium} is no longer guaranteed. %We can define it as the $\sup  $ but it still remains to decide the quantity by each agent when the supply is not fully satisfied. 
%	
%Furthermore, even when we solve technical issues,  Nash equilibria might fail to exist, as shown in the following example. %{\color{red}ESEMPIO COMPLETAMENTE DA RISCRIVERE}
%Furthermore, solving technical issues is in general not sufficient to guarantee existence of Nash equilibria.
\end{remark}

%%%%%%%%%%%%%%%%%%%%%%%%
%%%%%%%%%%%%%%%%%%%%%%%%%%%%%%%%% COMMENTED EXAMPLE %%%%%%%%%%%%%%%%%%%%%%%%%%%%%%%%%%%%%%
%%%%%%%%%%%%%%%%%%%%%%%%%%%%%%%
\section{Existence of Nash equilibria}\label{sec:analysis}

In this section, we present our first main result, establishing the existence and providing a characterization of (pure strategy) Nash equilibria of the PAB auction game with Lipschitz-continuous supply functions. 
%Thus, the main issue is that, without any particular restriction on the strategy space, the best response can be a step function, leading to an empty best response set. We solve this problem by restricting the strategy space of the agents to the space of $K$-Lipschitz supply functions, for a fixed $K>0$. 

A supply function $F: [0, \hat{p}]\rightarrow\R_+$ is called \textit{Lipschitz-continuous} if there exists  a constant $K>0$, to be referred as its Lipschitz constant, such that 
\be\label{K-Lipschitz}
| F(x)-F(y)|\leq K|x -y|\,,\ \forall x ,y \in[0, \hat{p}] \,, \,\,x \neq y \,.\ee
Lipschitz-continuous supply functions with Lipschitz constant $K=1$ are referred to as \emph{non-expansive}. 
We denote the set of supply functions that are Lipschitz-continuous with Lipschitz constant $K>0$ by 
$\lip=\big\{F\in\cmon:\,\eqref{K-Lipschitz}\big\}$.
%The $\lip$-PAB auction game will be the main object of our analysis. 

We shall proceed as follows. First, in Section \ref{ss:non-expansive}, we prove that existence of Nash equilibria for $\lip$-PAB games with arbitrary $K>0$ is equivalent to existence of Nash equilibria for $\nexp$-PAB games. %{\color{red}, introducing a technical reduction that will be helpful throughout the paper}. 
Then, in Section \ref{ss:char}, we show that a best response supply function in a $\nexp$-PAB auction game is always the positive part of an affine function with unitary slope, that can be parametrized by a scalar value in $[0,\hat p]$ to be referred to as its \emph{activation price}. This leads to the introduction of an auxiliary game, the activation price game, where the players have all action space equal to the compact interval $[0,\hat p]$ and whose Nash equilibria are in bijection with those of the corresponding $\nexp$-PAB auction game. In Section \ref{ss:activgame}, we prove several properties of the activation price game, including semi-concavity of its utility functions. Finally, in Section \ref{ss:existence}, we prove the existence of Nash equilibria for the activation price game, which in turn implies that for $\lip$-PAB games.

\subsection{Reduction to non-expansive supply functions}\label{ss:non-expansive}
{We consider a $\lip$-PAB auction game (for a generic $K$) with a set of producers $\mathcal N$, production cost functions $C_i$ for every $i$ in $\mc N$, and demand function $D$. {For $\mc A=\lip$, we let $p^*:\mc A^{\mc N}\to\R_+$ and $u_i:\mc A^{\mc N} \to\R$ to be, respectively, the corresponding market clearing price and the utility functions as defined in \eqref{eq:equilibrium} and \eqref{eq:utility_pab}.} %$p^*:(\lip)^{\mc N}\to\R_+$ and $u_i:(\lip)^{\mc N}\to\R$ to be, respectively, the corresponding market clearing price and the utility functions as defined in \eqref{eq:equilibrium} and \eqref{eq:utility_pab}. 
	We now consider the $\nexp$-PAB auction game with the same set of producers $\mathcal N$, and production cost functions and demand function specified below
	\be\label{new-game}
	\begin{aligned} C_i^{1}(q)&:=K^{-1}C_i(Kq),\; i\in\mc N,\, q\in\R^+ \\ D^1(p)&:=K^{-1}D(p)\,,\quad  p\in\R^+\,.
	\end{aligned}\ee
	We can establish a connection between the two games through the vector space isomorphism from $(\nexp)^{\mc N}$ to $(\lip)^{\mc N}$ given by the scalar multiplication
	%$$
	\be\label{eq:transf}(\nexp)^{\mc N}\ni S\mapsto KS\in (\lip)^{\mc N}\ee
	%We
	%{\color{red} For the sake of clarity, we temporarily} 
	We indicate with a superscript $1$, all quantities and function related to the $\nexp$-PAB auction game above, in particular, $p^{*1}$ and $u_i^1$ denote market clearing price and utility functions for this game, while $\mc B_i^1(S_{-i})$ with $S_{-i}\in (\nexp)^{\mc N\setminus\{i\}}$  indicates the usual best response set.
	\begin{proposition}\label{prop:K=1} The following facts hold for every $S\in (\nexp)^{\mc N}$.
		\begin{enumerate}
			\item[(i)] $p^{*1}(S)=p^*(KS)$ %for every $S\in (\nexp)^{\mc N}$
			\item[(ii)] $u_i^1(S)=K^{-1}u_i(KS)$ %for every $S\in (\nexp)^{\mc N}$
			\item[(iii)] $\mc B^1_i(S_{-i})=\mc B_i(KS_{-i})$
			\item[(iv)] $S$ is a Nash of the $\nexp$-PAB auction game if and only if $KS$ is a Nash of the $\lip$-PAB auction game .
		\end{enumerate}
	\end{proposition}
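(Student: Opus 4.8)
The plan is to view the linear map \eqref{eq:transf} as a similarity transformation between the two games: it rescales each producer's quantity by a factor $K$, turns the primitives $(C_i,D)$ into exactly $(C_i^1,D^1)$ as in \eqref{new-game}, and therefore rescales utilities by $K$ while leaving the market-clearing price invariant. Before anything else I would check that the $\nexp$-PAB auction game built from \eqref{new-game} is a legitimate instance of Definition~\ref{def:PAB}: since $K>0$, each $C_i^1(q)=K^{-1}C_i(Kq)$ is again twice continuously differentiable, non-decreasing and convex, $D^1=K^{-1}D$ is again twice continuously differentiable, strictly decreasing, positive at $0$ and concave on $[0,\hat p]$, and the cutoff $\hat p$ is unchanged because $D^1(p)>0\iff D(p)>0$; hence $p^{*1}$ is well defined on $(\nexp)^{\mc N}$. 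Moreover, multiplication by $K$ is a linear bijection of $\cmon$ onto itself preserving monotonicity and the normalization $F(0)=0$ and turning \eqref{K-Lipschitz} with constant $1$ into \eqref{K-Lipschitz} with constant $K$, so \eqref{eq:transf} is indeed an isomorphism of $(\nexp)^{\mc N}$ onto $(\lip)^{\mc N}$.

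For (i), fix $S\in(\nexp)^{\mc N}$ and set $p=p^{*1}(S)$, so that $D^1(p)=\sum_i S_i(p)$ by \eqref{eq:equilibrium} written for the $\nexp$-game; multiplying by $K$ gives $D(p)=\sum_i (KS_i)(p)$, which is precisely \eqref{eq:equilibrium} for the configuration $KS$ in the $\lip$-game, so by uniqueness of the market-clearing price $p=p^*(KS)$. For (ii), write $p:=p^{*1}(S)=p^*(KS)$, multiply the defining identity \eqref{eq:utility_pab} for $u_i^1(S)$ by $K$, and use $K\,p\,S_i(p)=p\,(KS_i)(p)$, $K\int_0^{p}S_i(p)\,\dd p=\int_0^{p}(KS_i)(p)\,\dd p$ and $K\,C_i^1(S_i(p))=C_i\big((KS_i)(p)\big)$; this yields $K\,u_i^1(S)=u_i(KS)$, i.e.\ $u_i^1(S)=K^{-1}u_i(KS)$.

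Statements (iii) and (iv) follow with no extra argument. By (ii), for every $i$, every $S_{-i}$ and every $T_i\in\nexp$ we have $u_i^1(T_i,S_{-i})=K^{-1}u_i(KT_i,KS_{-i})$; since $K^{-1}>0$ and $T_i\mapsto KT_i$ is a bijection of $\nexp$ onto $\lip$, the maximizers of $u_i^1(\cdot,S_{-i})$ over $\nexp$ are exactly the preimages under \eqref{eq:transf} of the maximizers of $u_i(\cdot,KS_{-i})$ over $\lip$, i.e.\ $T_i\in\mc B_i^1(S_{-i})\iff KT_i\in\mc B_i(KS_{-i})$, which is (iii) read through the identification \eqref{eq:transf}. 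Applying this for all $i\in\mc N$ gives (iv): $S\in(\nexp)^{\mc N}$ is a fixed point of the best-response correspondence of the $\nexp$-PAB game if and only if $KS$ is one for the $\lip$-PAB game, i.e.\ $S$ is a Nash equilibrium of the former exactly when $KS$ is a Nash equilibrium of the latter.

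I do not expect a genuine obstacle here: the argument is essentially a change of variables. The only point requiring care is the bookkeeping of the scaling — getting the powers of $K$ in \eqref{new-game} consistent so that quantities scale by $K$ and utilities by $K$ simultaneously — together with the routine but necessary verification that $C_i^1$ and $D^1$ still satisfy the model's standing assumptions, ensuring that the reduced game and its unique market-clearing price are well posed.
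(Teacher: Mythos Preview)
Your proof is correct and follows essentially the same route as the paper's: verify that the market-clearing equations match after scaling (part (i)), compute the utility directly to obtain the $K^{-1}$ factor (part (ii)), and then observe that (iii) and (iv) are immediate from (ii) together with the bijection $T_i\mapsto KT_i$. You are in fact slightly more careful than the paper, both in checking that $(C_i^1,D^1)$ satisfy the standing model assumptions and in making explicit that the literal equality $\mc B_i^1(S_{-i})=\mc B_i(KS_{-i})$ must be read through the identification \eqref{eq:transf}, since the two best-response sets live in $\nexp$ and $\lip$ respectively.
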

	
	\begin{proof}
		(i) It follows from expression \eqref{eq:equilibrium}  and the definition \eqref{new-game} for the costs and demand functions of the $\nexp$-PAB auction game that
		$$D^1(p^{*}(KS))=K^{-1}D(p^{*}(KS))=K^{-1} \sum_{i=1}^nKS^{}_i(p^{*}(KS))=\sum_{i=1}^nS^{}_i(p^{*}(KS))$$
		This implies that $p^{*}(KS)$ coincides with the market clearing price relative to the configuration $S\in (\nexp)^{\mc N}$ of the $\nexp$-PAB auction game and thus yields (i).
		
		(ii) We put $p^*=p^{*1}(S)=p^*(KS)$.
		%Hence, $p^{(K)}$ satisfies \eqref{eq:equilibrium}, so that it coincides with the market clearing price $p^*$ for the $\nexp$-PAB auction game %with  production cost functions $C_i(q)$ for every $i$ in $\mc N$
		%with non-expansive supply functions $S_i(q)$ for every $i$ in $\mc N$, and  demand function $D(p)$. % \tcb{($p^*$ does not depend on the costs).}
		For every producer $i$ in $\mc N$, the following derivation holds:
		$$
		\ba{rcl}\ds u_i^{1}(S^{})
		&=&\ds p^{*} S_i^{}(p^{*}) - \int_0^{p^{*}}S _i^{}(p)\dd p - C_i^{1}(S_i^{}(p^{*}))\\
		&=&\ds p^* K^{-1}KS_i(p^*) - \int_0^{p^*}K^{-1}KS _i(p)\dd p -  K^{-1}C_i(KS_i(p^*))\\%KC_i^{\circ}(S_i(p^*))
		&=&\ds K^{-1}u_i(KS)\,,\ea$$
		(iii) and (iv) are direct consequences of relation (ii).
\end{proof}}
{According to Proposition \ref{prop:K=1}, utilities, best responses and Nash equilibria of the $\nexp$-PAB auction game with demand and costs in  \eqref{new-game} are uniquely mapped to those of the $\lip$-PAB auction game  through the isomorphism \eqref{eq:transf}. Therefore, from now on, when deriving results on existence and uniqueness of Nash equilibria, we set w.l.o.g. $K=1$. %We refer to $\nexp$-PAB auction game as \textit{non-expansive}. 
	%	In Section , %last section of the paper, 
	We apply the transformations in \eqref{new-game} and \eqref{eq:transf} when we need to characterize Nash equilibria for an arbitrary $K$, e.g., in Section \ref{sec:ld_qc} when we study the limit behavior as $K$ approaches infinity. } %results for the $\nexp$-PAB auction game can be extended to the $\lip$-PAB auction game through the transformation \eqref{new-game}. Therefore, from now on, we set w.l.o.g. $K=1$.}

		\subsection{Characterization of best responses}\label{ss:char}	
		
		Our next result establishes that, in $\nexp$-PAB auction games, best response supply functions have an affine dependence on the price, up to the market-clearing price $p^*$. 
		
		\begin{lemma}\label{lemmma:affine}
			Consider the $\nexp$-PAB auction game with set of producers $\mathcal N$, production cost functions $C_i$ for every $i$ in $\mc N$, and  demand function $D$. 
			Then, for  every strategy profile $S$ in $\mc S=(\nexp)^{\mc N}$ and every producer $i$ in $\mc N$, there exists an activation price $x_i$ in $[0,\hat p]$ such that the non-expansive supply function $\tilde{S}_i(p)=[p-x_i]_+$ satisfies
			\be\label{same-clearing}p^*(\tilde S_i,S_{-i}^*)=p^*(S_i,S_{-i}^*)\,,\ee
			and
			\be\label{eq1}u_i(\tilde S_i,S_{-i}^*)\ge u(S_i,S_{-i}^*)\,,\ee 
			with equality if and only if 
			\begin{equation}\label{eq:br_form}
				S_i(p) =[p-x_i]_+\,,\qquad\forall p\in [0, p^*]\,.%\begin{cases} 
			\end{equation}
		\end{lemma}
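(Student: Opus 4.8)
The key structural observation I would start from is that, for a fixed opponents' profile $S_{-i}$, producer $i$'s utility in \eqref{eq:utility_pab} depends on $S_i$ only through the three numbers $p^*$, $S_i(p^*)$ and $\int_0^{p^*}S_i(p)\dd p$; the first two can be kept fixed while the third is made as small as possible by replacing $S_i$ with the pointwise-smallest non-expansive function having the prescribed value at $p^*$, which is a truncated affine function of unit slope. Concretely, I would fix $S$ and $i$, set $p^*:=p^*(S)$ and $q^*:=S_i(p^*)$, and note that non-expansiveness together with $S_i(0)=0$ gives $q^*=S_i(p^*)-S_i(0)\le p^*$. Hence $x_i:=p^*-q^*$ lies in $[0,p^*]\subseteq[0,\hat p]$, and the function $\tilde S_i(p):=[p-x_i]_+$ belongs to $\nexp$ and satisfies $\tilde S_i(0)=0$ and $\tilde S_i(p^*)=q^*=S_i(p^*)$.

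For \eqref{same-clearing} I would substitute $\tilde S_i$ into \eqref{eq:equilibrium}: since $\tilde S_i(p^*)=S_i(p^*)$, we have $\tilde S_i(p^*)+\sum_{j\ne i}S_j(p^*)=D(p^*)$, and uniqueness of the market-clearing price (demand strictly decreasing, supplies non-decreasing, as recalled in the footnote after \eqref{eq:equilibrium}) forces $p^*(\tilde S_i,S_{-i})=p^*$. Because $p^*$ and the value $S_i(p^*)$ are unchanged, the terms $p^*S_i(p^*)$ and $C_i(S_i(p^*))$ in \eqref{eq:utility_pab} are identical for $(S_i,S_{-i})$ and $(\tilde S_i,S_{-i})$, so \eqref{eq1} is equivalent to $\int_0^{p^*}\tilde S_i(p)\dd p\le\int_0^{p^*}S_i(p)\dd p$. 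I would prove this via the pointwise bound: for every $p\in[0,p^*]$, non-expansiveness yields $S_i(p)\ge S_i(p^*)-(p^*-p)=p-x_i$, while $S_i(p)\ge0$ because $S_i\in\cmon$; hence $S_i(p)\ge[p-x_i]_+=\tilde S_i(p)$. Integrating over $[0,p^*]$ gives the desired inequality, hence \eqref{eq1}. For the equality statement, observe that $S_i-\tilde S_i$ is continuous and nonnegative on $[0,p^*]$, so $\int_0^{p^*}(S_i-\tilde S_i)=0$ holds if and only if $S_i\equiv\tilde S_i$ on $[0,p^*]$, which is precisely \eqref{eq:br_form}.

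I do not expect a serious obstacle: the whole argument rests on the elementary fact that $[p-x_i]_+$ is the smallest element of $\nexp$ with the prescribed value $q^*$ at $p^*$. The two points that deserve care are (a) the inequality $x_i\ge 0$, where non-expansiveness — not mere monotonicity — is essential; this is exactly what fails in the $\cmon$-game of Proposition~\ref{pr:no_br} and what makes the later reduction to an activation-price game possible; and (b) invoking uniqueness of the market-clearing price correctly, so that swapping $S_i$ for $\tilde S_i$ genuinely leaves $p^*$ unchanged. The degenerate case $q^*=0$ (so $x_i=p^*$ and $\tilde S_i\equiv 0$ on $[0,p^*]$) is covered by the same bounds without modification.
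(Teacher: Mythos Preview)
Your proof is correct and follows essentially the same approach as the paper: the paper defines $x_i:=\sum_{j\neq i}S_j(p^*)-D(p^*)+p^*$, which (using the market-clearing identity) is exactly your $p^*-S_i(p^*)$, and then proceeds with the same pointwise comparison $S_i(p)\ge[p-x_i]_+$ on $[0,p^*]$ and the same integral/continuity argument for the equality case. Your presentation is in fact slightly more streamlined, since you obtain the pointwise bound in one step rather than splitting into $[0,x_i]$ and $(x_i,p^*]$, and you make explicit why non-expansiveness is what guarantees $x_i\ge 0$.
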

		\begin{proof}
			Consider a strategy profile $S$ in $\mc S$ and let $p^*=p^*(S)$ denote the associated market-clearing price. 
			For a producer $i$ in $\mc N$, consider the activation price
			$$x_i := {\sum_{j\neq i}S_j(p^*)- D(p^*)}+p^*\,,$$
			and define the supply function
			$\tilde{S}_i(p)=[p-x_i]_+$. 
			(C.f.~Figure \ref{fig:Klip_sf}.) 
			Clearly, $\tilde{S}_i\in\nexp$. 
			Notice that 
			$$\tilde S_i(p^*)=p^*-x_i=D(p^*)-\sum_{j\neq i}S_j(p^*)\,,$$
			so that \eqref{same-clearing} holds true and $\tilde S_i(p^*)=S_i(p^*)$. 
			Moreover, 
			$$S_i(p)\geq 0=\tilde{S}_i(p)\,,\qquad \forall p\in[0, x_i]\,.$$ 
			On the other hand, for every $p$ in $(x_i, p^*]$, we have that 
			$$
			\ba{rcl}
			S_i (p^*) - S_i (p) 
			&=&|S_i (p^*) - S_i (p)|
			\le|p^*-p| \\[7pt]
			%&=& K(p^*-p) \\
			%	 \\[7pt]	&=& 
			&=& p^*-x_i-(p-x_i) 
			= \tilde{S}_i(p^*)-\tilde{S}_i(p)%\\[7pt] 
			%&=& 
			=S_i(p^*)-\tilde{S}_i(p)	\,,
			\ea
			$$
			where the inequality follows from the fact that $S_i\in\nexp$. Hence, we have that  
			$$S_i(p)\geq \tilde{S}_i(p)\,,\qquad \forall p\in[0, p^*]\,,\qquad  S_i(p^*)=\tilde{S}_i(p^*)\,.$$ 
			Therefore, %We now want to show that playing the strategy $\tilde{S}_i$ gives a greater or equal utility than $S_i$, that is,
			$$
			\ba{rcl}
			u_i(S_i, S_{-i}) &= & p^*S_i(p^*)- \int_0^{p^*}S_i(p)dp -C_i(S_i(p^*))\\
			&\leq & p^*\tilde{S}_i(p^*)- \int_0^{p^*}\tilde{S}_i(p)dp -C_i(\tilde{S}_i(p^*))\\&= &u_i(\tilde{S}_i, S_{-i})\,,
			\ea
			$$
			thus proving \eqref{eq1}. 
			%Notice that, as $S_i(p^*)=\tilde{S}_i(p^*)$, the previous inequality holds true if and only if
			%\begin{equation}\label{eq:integral}
			%	\int_0^{p^*}S_i(p)\dd p\geq \int_0^{p^*}\tilde{S}_i(p)\dd p\,.
			%\end{equation}
			We remark that %since the inequality is satisfied punctually, 
			$$\int_0^{p^*}S_i(p)\dd p= \int_0^{p^*}\tilde{S}_i(p)\dd p \qquad \Leftrightarrow\qquad  S_i(p)=\tilde{S}_i(p) \,,\quad \forall p \in[0, p^*]\,,$$
			which implies that $u_i(S_i, S_{-i})=u_i(\tilde{S}_i, S_{-i})$ if and only if $S_i(p)=\tilde{S}_i(p)$ for every $p$ in $[0, p^*]$. Otherwise, $u_i(S_i, S_{-i})<u_i(\tilde{S}_i, S_{-i})$.
			This concludes the proof.
		\end{proof}
		
		\begin{proposition}[Affine best-response]\label{pr:br_form}
			Consider the $\nexp$-PAB auction game with set of producers $\mathcal N$, production cost functions $C_i$ for every $i$ in $\mc N$, and  demand function $D$. 
			For  every strategy profile $S$ in $\mc S=(\nexp)^{\mc N}$, if $S_i \in \mathcal B_i(S_{-i})$ is a best response of a producer $i$ in $\mc N$, then  there exists an activation price $x_i$ in $[0, \hat{p}]$ such that 	\eqref{eq:br_form} holds true. 
			%		\begin{equation}\label{eq:br_form}
				%S_i(p) =[p-x_i]_+\,,\qquad\forall p\in [0, p^*]\,,%\begin{cases} 
				%		\end{equation}
			%	where $p^*=p^*(S)$ is the market-clearing price.
		\end{proposition}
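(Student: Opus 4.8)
The plan is to obtain Proposition \ref{pr:br_form} as a direct corollary of Lemma \ref{lemmma:affine}. Fix a strategy profile $S$ in $\mc S=(\nexp)^{\mc N}$ and a producer $i$ in $\mc N$, and suppose that $S_i\in\mathcal B_i(S_{-i})$. Apply Lemma \ref{lemmma:affine} to the profile $S=(S_i,S_{-i})$: this yields an activation price $x_i$ in $[0,\hat p]$ such that the non-expansive supply function $\tilde S_i(p)=[p-x_i]_+$ induces the same market-clearing price $p^*=p^*(S)$ and satisfies $u_i(\tilde S_i,S_{-i})\ge u_i(S_i,S_{-i})$, with equality precisely when $S_i(p)=[p-x_i]_+$ for all $p$ in $[0,p^*]$.

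Now I would close the loop by the optimality of $S_i$. Since $\tilde S_i\in\nexp$ is a feasible strategy and $S_i$ maximizes $u_i(\cdot,S_{-i})$ over $\nexp$ by hypothesis, we also have $u_i(S_i,S_{-i})\ge u_i(\tilde S_i,S_{-i})$. Combining this with the inequality \eqref{eq1} from Lemma \ref{lemmma:affine} forces $u_i(S_i,S_{-i})=u_i(\tilde S_i,S_{-i})$, and the equality clause of the lemma then gives $S_i(p)=[p-x_i]_+$ for every $p$ in $[0,p^*]$, which is exactly \eqref{eq:br_form}. This completes the argument.

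There is no genuine obstacle here: the substance of the proof has already been carried out inside Lemma \ref{lemmma:affine} — in particular the explicit choice $x_i=\sum_{j\neq i}S_j(p^*)-D(p^*)+p^*$ and the verification that it lies in $[0,p^*]\subseteq[0,\hat p]$ (using $S_i\in\nexp$ and $S_i(0)=0$ to bound $S_i(p^*)\le p^*$), together with the pointwise domination $S_i\ge\tilde S_i$ on $[0,p^*]$ that drives the integral comparison. The only thing the proposition adds is the remark that a best response is pinned down solely on $[0,p^*]$: its values on $(p^*,\hat p]$ do not enter the utility \eqref{eq:utility_pab}, so any non-expansive extension is equally optimal. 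This is precisely what justifies reducing the $\nexp$-PAB auction game to the finite-dimensional activation price game, whose action set is the interval $[0,\hat p]$, in the sections that follow.
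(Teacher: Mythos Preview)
Your proposal is correct and follows exactly the same approach as the paper, which simply states that the claim is a direct consequence of Lemma \ref{lemmma:affine}. Your write-up merely spells out the one missing line: since $S_i$ is a best response and $\tilde S_i\in\nexp$, the inequality \eqref{eq1} must hold with equality, whence the equality clause of Lemma \ref{lemmma:affine} gives \eqref{eq:br_form}.
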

		\begin{proof}
			The claim is a direct consequence of Lemma \ref{lemmma:affine}. 
		\end{proof}
		
		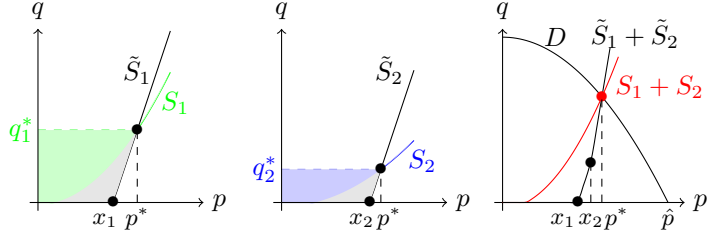
\begin{figure}
			\centering
			\begin{tikzpicture}[scale=0.22]
				\draw[name path=A,scale=1, domain=0:8, smooth, variable=\x, color=green] plot ({\x}, {max(-1/8+(1/8)*\x^2,0)});
				\draw[name path=C,scale=1, domain=0:6, smooth, variable=\x, color=black] plot ({\x}, {max(3*(\x-2/3+6.375/3-6),0)});
				\draw[name path = B,scale=1, domain=0:6, dashed, variable=\x, color=green] plot ({\x},{4.375});
				\tikzfillbetween[of=C and B]{gray!20};
				\tikzfillbetween[of=A and B]{green!20};

				\draw[->] (-0.5, 0) -- (10, 0) node[right] {$p$};
				\draw[->] (0, -0.5) -- (0, 10.5) node[above] {$q$}; 
				\draw[scale=1, domain=6:8, smooth, variable=\x, color=black] plot ({\x}, {max(3*(\x-2/3+6.375/3-6),0)});
				\node at (6, 4.375) {\textbullet};
				\node at (2/3-6.375/3+6, 0) {\textbullet};
				\draw[scale=1, domain=0:4.375, dashed, variable=\y] plot ({6},{\y});
				
				\node[green] at (-1,4.375) {$q_1^*$};
				\node at (6,-1) {\small$p^*$};%{$S_1+S_2$};	
				\node at (2/3-6.375/3+6-0.5,-1.1) {\small$x_1$};
				\node[green] at (8.3,6) {$S_1$};%{$S_1$};
				\node at (6,8) {$\tilde{S}_1$};%{$S_1$};
			\end{tikzpicture}
			\begin{tikzpicture}[scale=0.22]
				\draw[name path=A,scale=1, domain=0:8, smooth, variable=\x, color=blue] plot ({\x}, {max(-1/4+(1/16)*\x^2,0)});
				\draw[name path=C,scale=1, domain=0:6, smooth, variable=\x, color=black] plot ({\x}, {max(3*(\x-4.375/3+6.375/3-6),0)});
				\draw[name path = B,scale=1, domain=0:6, dashed, variable=\x, color=blue] plot ({\x},{2});
				
				\tikzfillbetween[of=C and B]{gray!20};
				\tikzfillbetween[of=A and B]{blue!20};
				\draw[->] (-0.5, 0) -- (10, 0) node[right] {$p$};
				\draw[->] (0, -0.5) -- (0, 10.5) node[above] {$q$}; 
				
				\draw[scale=1, domain=6:8, smooth, variable=\x, color=black] plot ({\x}, {max(3*(\x-4.375/3+6.375/3-6),0)});
				\node at (6, 2) {\textbullet};
				\node at (4.375/3-6.375/3+6, 0) {\textbullet};
				\draw[scale=1, domain=0:2, dashed, variable=\y] plot ({6},{\y});
				
				\node[blue] at (-1,2) {$q_2^*$};
				\node at (6.5,-1) {\small$p^*$};%{$S_1+S_2$};	
				\node at (4.375/3-6.375/3+6-0.5,-1.1) {\small$x_2$};
				\node[blue] at (8.5,2.5) {$S_2$};%{$S_1$};
				\node at (6.5,8) {$\tilde{S}_2$};%{$S_1$};
			\end{tikzpicture}
			\begin{tikzpicture}[scale=0.22]
				\draw[->] (-0.5, 0) -- (11, 0) node[right] {$p$};
				\draw[->] (0, -0.5) -- (0, 10.5) node[above] {$q$}; 
				\draw[scale=1, domain=0:10, smooth, variable=\x] plot ({\x}, {10-(1/10)*\x^2});
				\draw[scale=1, domain=0:7, smooth, variable=\x, color=red] plot ({\x}, {max(-(1/4+1/8)+ (1/16+1/8)*\x^2,0)});
				\draw[scale=1, domain=0:6.5, smooth, variable=\x] plot ({\x}, {max(3*(\x-4.375/3+6.375/3-6),0)+max(3*(\x-2/3+6.375/3-6),0)});
				\node[red] at (6, 6.375) {\textbullet};
				\draw[scale=1, domain=0:6.375, dashed, variable=\y] plot ({6},{\y});
				\draw[scale=1, domain=0:2.375, dashed, variable=\y] plot ({4.375/3-6.375/3+6},{\y});
				\node at (2/3-6.375/3+6, 0) {\textbullet};
				\node at (4.375/3-6.375/3+6, 4.375-2) {\textbullet};
				\node at (6+0.9,-1) {{\small$p^*$}};
				\node at (2/3-6.375/3+6-0.9,-1.1) {{\small$x_1$}};
				\node at (4.375/3-6.375/3+6,-1.1) {{\small$x_2$}};
				\node at (10,-1) {{\small$\hat{p}$}};
				\node at (3.2,10) {$D$};%{$S_1$};
				\node[red] at (9.5,7){$S_1+S_2$};
				\node at (8,10.2){$\tilde{S}_1+\tilde{S}_2$};
			\end{tikzpicture}
			\caption{Graphical illustration of Proposition \ref{pr:br_form}.}
			\label{fig:Klip_sf}
		\end{figure}

		%	\begin{remark}\label{rem:br_form}
			A graphical interpretation of Lemma \ref{lemmma:affine} and Proposition \ref{pr:br_form} is illustrated in Figure \ref{fig:Klip_sf}. Consider two supply functions $S_1$ and $S_2$ as in Figure \ref{fig:paba}. Notice that when playing $\tilde{S}_1(p) = [p-x_1]_+$ for $x_1$ as in figure, %,  constructed as in the proof of Proposition \ref{pr:br_form} for $K=3$,
			producer $1$ receives a higher utility than the one it would have obtained by playing $S_1(p)$. Indeed, its remuneration increases (colored areas), while the market-clearing price does not change, thus yielding the same quantity $\tilde S_i(p^*)=S_i(p^*)$ of good produced, hence the same total production cost $C_i(\tilde S_i(p^*))=C_i(S_i(p^*))$. This is also the case for player $2$ when playing $\tilde{S}_2(p)=[p-x_2]_+$ instead of $S_2(p)$. Then, for every strategy $S_i(p)$, it is possible to construct another supply function $\tilde{S}_i(p)$ as in \eqref{eq:br_form} yielding a higher value of the utility function. Thus, best responses must have such form up to the market-clearing price $p^*$.
			%\end{remark}
			
			%\subsection{The $\aff$-PAB auction game}

			Proposition \ref{pr:br_form}  yields a fundamental simplification in our problem as it implies that the supply functions in every Nash equilibrium $S^*$ of a $\nexp$-PAB auction game are necessarily in the form \eqref{eq:br_form} for every producer $i$ in $\mc N$. 
			This motivates the introduction of a restricted PAB  auction game where the producers choose among supply functions in the set 
			$$\aff=\left\{F: [0, \hat p]\to\R\;|\; F(p) =[p-x]_+\;\hbox{for some}\; x\in  [0, \hat p]\right\}\,.$$
			A simple but key observation is that, differently from the space of non-expansive supply functions $\nexp$, which is infinite-dimensional, the space of affine supply functions $\aff$ is naturally embedded into a one-dimensional space. In fact, for every producer $i$, a supply function as in \eqref{eq:br_form} can be identified with its \emph{activation price} $x_i$ in $[0,\hat p]$ and every configuration $S$ in $(\aff)^{\mc N}$ can be identified with the activation price profile $x$ in $\mc X=[0,\hat p]^{\mc N}$. 
			This leads to the following definition. 
			
			\begin{definition}[Activation price game]\label{def-activation-point-game}
				Consider a set of producers $\mathcal N$, production cost functions $C_i$ for every $i$ in $\mc N$, and demand function $D$. The associated  \emph{activation price game} is the strategic game with player set $\mc N$, action space $[0,\hat p]$, and utility functions
				%\tcr{IO CAMBIAREI NOME ALLE UTILITA" DI QUESTO GIOCO E LE CHIAMEREI $\hat u_i$ o $\ov u_i$}
				\begin{equation}\label{eq:u_r}
					\apu_i(x)%=\apu_i(x_i, x_{-i}) 
					= \,\ov p[\ov p-x_i]_+-\frac{1}{2}[\ov p-x_i]_+^2-C_i\left([\ov p-x_i]_+\right)%\quad i\in\mc N\,, 
					%	u_i(x_i, x_j) = &p^*K(p^*-x_i)^+-\int_{0}^{p^*}K(p-x_i)^+\dd p -C_i\left(K(p^*-x_i)^+\right)\\
					%\text{s.t.: }& \,\,D(p^*)=\sum_{i=1}^nK[p^*-x_i]_+	 \,.
				\end{equation}
				%for every producer $i$ in $\mc N$,  
				for $i$ in $\mc N$,
				where $\ov p=\ov p(x)$ 
				is the unique market-clearing price satisfying 
				\begin{equation}\label{eq:equilibrium_r}
					D(\ov p)=\sum_{i=1}^n[\ov p-x_i]_+\,.
				\end{equation}
				The activation price profile set is denoted by $\mc X=[0,\hat p]^{\mc N}$. 
			\end{definition}

		The following result is a consequence of Lemma \ref{lemmma:affine} and Proposition \ref{pr:br_form}.
		\begin{proposition}\label{prop:restr_game} 
			Consider a set of producers $\mathcal N$, production cost functions $C_i$ for every $i$ in $\mc N$, and  demand function $D$. 
			%		Consider the $K$-L PAB auction game, for some $K>0$, and the restricted % auction
			%		game with the same $K$. 
			Then:
			\begin{enumerate}
				\item[(i)] for every Nash equilibrium $x^*$ of the activation price game, the strategy profile $S^*$ with supply functions 
				\be\label{Si=p-x}S_i^*(p)=[p-x_i^*]_+\,,\qquad\forall p\in[0,\hat p]\,,\ee for every  $i$ in $\mc N$ is a Nash equilibrium of the $\nexp$-PAB auction game; 
				\item[(ii)] for every Nash equilibrium $S^*$ of the $\nexp$-PAB auction game, there exists a Nash equilibrium $x^*$ of the activation price game such that 
				\be\label{p=p}p^*(S^*)=\ov p(x^*)\,,\ee
				and 
				\be\label{Si=p-xi}S_i^*(p)=[p-x_i^*]_+\,,\qquad \forall p\in[0,p^*(S^*)]\,,\ee
				for every $i$ in $\mc N$. 
			\end{enumerate}
		\end{proposition}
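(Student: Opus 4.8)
The plan is to establish a dictionary between affine configurations and the activation price game, and then read off both items from it. The elementary fact I would record first is this: whenever a configuration $S$ satisfies $S_i(p)=[p-x_i]_+$ for all $p\in[0,p^*(S)]$ and all $i\in\mc N$, substituting into \eqref{eq:equilibrium} and \eqref{eq:utility_pab} and using $\int_0^{p^*}[p-x_i]_+\dd p=\tfrac12[p^*-x_i]_+^2$ gives $p^*(S)=\ov p(x)$ (by uniqueness of the solution of \eqref{eq:equilibrium_r}) and $u_i(S)=\apu_i(x)$ for every $i$, where $x=(x_j)_{j\in\mc N}$. Both claims of the proposition then reduce to comparisons between affine profiles.

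For item (i), let $x^*$ be a Nash equilibrium of the activation price game and $S^*$ the affine profile \eqref{Si=p-x}. Fix a producer $i$ and an arbitrary $S_i\in\nexp$. Applying Lemma \ref{lemmma:affine} to the profile $(S_i,S^*_{-i})$ yields an activation price $x_i$ with $\tilde S_i=[\cdot-x_i]_+$ and $u_i(S_i,S^*_{-i})\le u_i(\tilde S_i,S^*_{-i})$; since $(\tilde S_i,S^*_{-i})$ is an affine profile, the dictionary rewrites the right-hand side as $\apu_i(x_i,x^*_{-i})$, which is at most $\apu_i(x^*)=u_i(S^*)$ because $x^*$ is a Nash equilibrium. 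Letting $i$ and $S_i$ range, $S^*_i\in\mc B_i(S^*_{-i})$ for all $i$, so $S^*$ is a Nash equilibrium of the $\nexp$-PAB game.

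For item (ii), let $S^*$ be a Nash equilibrium of the $\nexp$-PAB game and set $p^*=p^*(S^*)$. Proposition \ref{pr:br_form}, applied to every producer $j$, produces activation prices $x^*_j$ with $S^*_j(p)=[p-x^*_j]_+$ on $[0,p^*]$, which is \eqref{Si=p-xi}; moreover $S^*_j(p^*)=p^*-x^*_j\ge 0$ while $S^*_j\in\nexp$ gives $S^*_j(p^*)\le p^*$, so $x^*_j\in[0,p^*]$. Evaluating \eqref{eq:equilibrium} at $p^*$ gives $D(p^*)=\sum_j[p^*-x^*_j]_+$, hence \eqref{p=p} by uniqueness of the market-clearing price, and then $\apu_i(x^*)=u_i(S^*)$ by the dictionary. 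The remaining, and genuinely delicate, point is to show $x^*$ is a Nash equilibrium of the activation price game, i.e.\ $\apu_i(\hat x_i,x^*_{-i})\le u_i(S^*)$ for every $i$ and every $\hat x_i\in[0,\hat p]$. The obstacle is that $S^*_{-i}$ need not agree with the affine profile $(\,[\cdot-x^*_j]_+\,)_{j\ne i}$ above $p^*$, so a deviation to $[\cdot-\hat x_i]_+$ in the $\nexp$-PAB game against $S^*_{-i}$ need not reproduce the activation-game clearing price $\bar q:=\ov p(\hat x_i,x^*_{-i})$.

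To overcome this, I would first note that non-expansiveness of each $S^*_j$ together with $x^*_j\le p^*$ yields $S^*_j(p)\le[p-x^*_j]_+$ for all $p$; comparing the two market-clearing equations (strictly decreasing residual demand versus non-decreasing aggregate of the $[\cdot-x^*_j]_+$) then forces the $\nexp$-PAB clearing price $q$ of $([\cdot-\hat x_i]_+,S^*_{-i})$ to satisfy $q\ge\bar q$, and producer $i$'s residual demand at $q$ equals $[q-\hat x_i]_+\ge[\bar q-\hat x_i]_+$. The decisive step is to cap the affine bid: for any level $m\in\bigl[0,[q-\hat x_i]_+\bigr]$ the bid $\min\bigl([\cdot-\hat x_i]_+,\,m\bigr)$ is a feasible deviation of $i$ against $S^*_{-i}$, the market now clears on its flat part, and a short computation (in which the dependence on the clearing price drops out) shows its utility is exactly $\tfrac12 m^2+\hat x_i m-C_i(m)$; this expression equals $\apu_i(\hat x_i,x^*_{-i})$ precisely when $m=[\bar q-\hat x_i]_+$. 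Since $S^*$ is a Nash equilibrium, every such deviation yields at most $u_i(S^*)$, whence $\apu_i(\hat x_i,x^*_{-i})\le u_i(S^*)=\apu_i(x^*)$. This capping device is the crux of the argument; everything else is bookkeeping.
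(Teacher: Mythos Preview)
Your proof is correct. For part (i) your argument coincides with the paper's: apply Lemma \ref{lemmma:affine} to an arbitrary deviation, translate the resulting affine profile via the ``dictionary'' into the activation price game, and invoke Nash optimality of $x^*$.

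For part (ii) you are actually more careful than the paper. The paper's proof simply asserts, for any $x_i\in[0,\hat p]$ and $S_i(p)=[p-x_i]_+$, the identities $p^*(S_i,S^*_{-i})=\ov p(x_i,x^*_{-i})$ and $u_i(S_i,S^*_{-i})=\ov u_i(x_i,x^*_{-i})$, and then concludes directly. As you correctly observe, this is only clear when the new clearing price does not exceed $p^*(S^*)$ (equivalently, when $x_i\le x_i^*$): above $p^*(S^*)$ the functions $S^*_j$ need not coincide with $[\,\cdot-x^*_j]_+$, so the two clearing prices can differ. Your observation that $S^*_j(p)\le[p-x^*_j]_+$ for all $p$ (from non-expansiveness and $x^*_j\le p^*$), hence $q\ge\bar q$, together with the capping device $\min([\,\cdot-\hat x_i]_+,m)$ and the computation showing its utility is $\tfrac12 m^2+\hat x_i m-C_i(m)$, cleanly closes this gap. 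This extra step is genuine and not merely cosmetic; it is what makes the direction (ii) airtight for an arbitrary Nash equilibrium $S^*$ rather than just one that is globally affine.
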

		\begin{proof} (i) Let $x^*$ in $\mc X$ be a Nash equilibrium of the activation price game and let $S^*$ in $\mc S$ be a strategy profile with supply functions as in \eqref{Si=p-x} for every producer $i$ in $\mc N$. Observe that 
			\be\label{eq0}p^*(S^*)=\ov p(x^*)\,,\qquad u_i(S^*)=\ov u_i(x^*)\,.\ee
			For every supply function $S_i$ in $\nexp$, Lemma \ref{lemmma:affine} implies there exists an activation price $x_i$ in $[0,\hat p]$ such that the supply function $\tilde S_i(p)=[p-x_i]_+$ satisfies \eqref{same-clearing} and \eqref{eq1}. 
			Now, observe that 
			\be\label{eq2} %\begin{array}{l}
			p^*(\tilde S_i,S_{-i}^*)=\ov p(x_i,x^*_{-i})\,,\;u_i(\tilde S_i,S_{-i}^*)=\ov u_i(x_i,x^*_{-i})\,.%\end{array}
			\ee
			Since $x^*$ is a Nash equilibrium of the activation price game, we have that 
			\be\label{eq3}\ov u_i(x^*)\ge\ov u_i(x_i,x^*_{-i})\,.\ee
			Equations \eqref{eq0}, \eqref{eq3}, \eqref{eq2}, and \eqref{eq1} imply that 
			$$
			u_i(S^*)=\ov u_i(x^*)\ge\ov u_i(x_i,x^*_{-i})=u_i(\tilde S_i,S_{-i}^*)\ge u(S_i,S_{-i}^*)\,.
			$$
			Since the above holds true for every producer $i$ in $\mc N$ and every supply functions $S_i$ in $\nexp$, we have that $S^*$ is a Nash equilibrium of the $\nexp$-PAB auction game. 
			
			(ii) Let $S^*$ be a Nash equilibrium of the $\nexp$-PAB auction game. Then, Proposition \ref{pr:br_form}, implies that, for every producer $i$ in $\mc N$, there exists an activation price $x_i^*$ in $[0,\hat p]$ such that \eqref{Si=p-xi} holds true. Observe that \eqref{eq0} holds true. Now, fix $x_i$ in $[0,\hat p]$ and let $S_i(p)=[p-x_i]_+$. Then, we have that 
			\be\label{eq4}p^*(S_i,S_{-i}^*)=\ov p(x_i,x^*_{-i})\,,\; u_i(S_i,S_{-i}^*)=\ov u_i(x_i,x^*_{-i})\,.\ee
			%		Since $S^*$ is a Nash equilibrium of the $\nexp$-PAB auction game, we have that 
			%\be\label{eq5}u_i(S^*)\ge u_i(S_i,S^*_{-i})\,.\ee
			It follows from \eqref{eq0}, and \eqref{eq4} %, and the fact that $S^*$ is a Nash equilibrium of the $\nexp$-PAB auction game 
			that
			%
			%\eqref{eq5}, and \eqref{eq4} that 
			$$\ov u_i(x^*)=u_i(S^*)\ge u_i(S_i,S^*_{-i})=\ov u_i(x_i,x^*_{-i})\,.$$
			Since the above holds true for every producer $i$ in $\mc N$ and every activation price $x_i$ in $[0,\hat p]$, we have that $x^*$ is a Nash equilibrium of the activation price game. 
		\end{proof}
		%		
		%		
		%		
		%			\item if $S^*$ is a Nash equilibrium of the $K$-L PAB auction game and $p^*$ is the market-clearing price, then there exists a Nash equilibrium $x^*$ of the restricted  game such that 
		%			$$
		%			S^*_i(p)=K[p-x_i^*]_+
		%			$$
		%			for all $i$ in $\mathcal N$ and $p$ in $[0, p^*]$.
		%			\item If $x^*$ is a Nash equilibrium of the restricted %Restricted $K$-L PAB auction game
		%			game, then $S^*$ satisfying, for all $i$ in $\mathcal N$,
		%			$$
		%			S^*_i(p)=K[p-x_i^*]_+
		%			$$
		%			is a Nash equilibrium of the $K$-L PAB auction game.
		%		\end{itemize}

	\begin{remark}\label{remark:degenerate} In the setting of Proposition \ref{prop:restr_game}, we notice that for every Nash equilibrium $x^*$ of the activation price game, there are infinitely many Nash equilibria of the $\nexp$-PAB auction game.
		%		
		%		
		%		
		%		 that there is an infinite number of Nash equilbria of the $\nexp$-PAB auction game associated to a Nash equilibrium $x^*$ of the $\aff$-PAB auction game. 
		Indeed, every configuration $\tilde S^*$ such that 
		$\tilde S^*_i(p)= [p-x_i^*]_+$ for every  $p \in [0, \ov p(x^*)]\,,$
		for every producer $i$ in $\mathcal N$, is a Nash equilibrium of the $\nexp$-PAB auction game.
	\end{remark}
	
	The following example illustrates the phenomenon discussed in Remark \ref{remark:degenerate}.
	\begin{figure}
		\centering
		\includegraphics[width=0.45\textwidth]{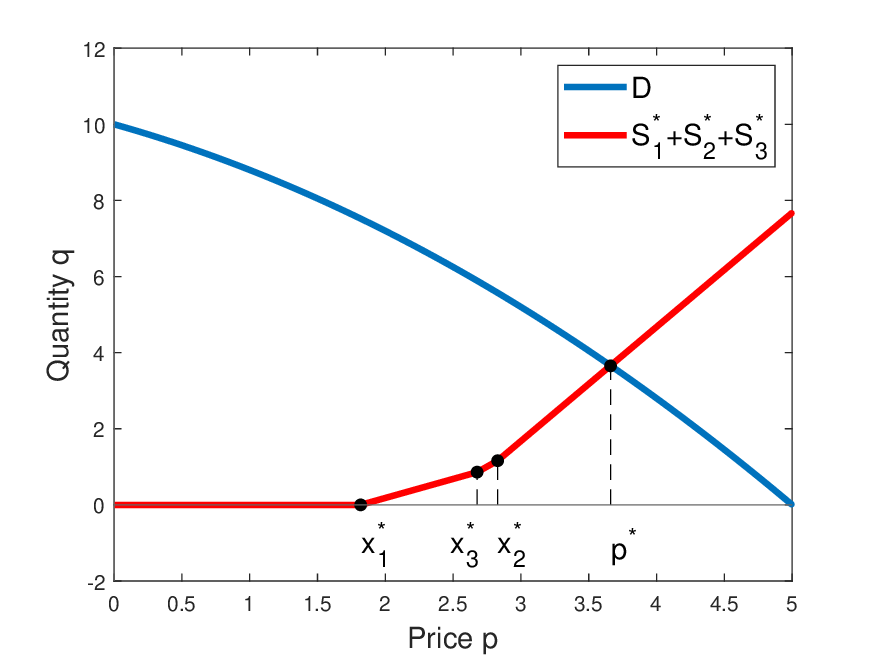}
		\includegraphics[width=0.45\textwidth]{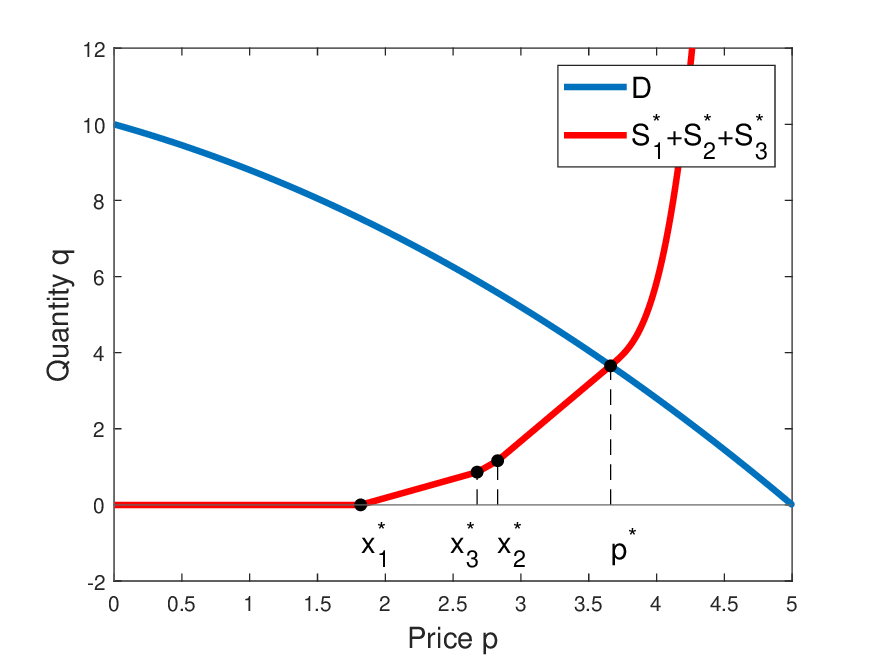}
		\caption{Two Nash equilibria of the $\nexp$-PAB auction game corresponding to the same Nash equilibrium of the activation price game (see Example \ref{ex:1_nonlin})}
		\label{fig:ex_nonlin1}
	\end{figure}
	\begin{example}\label{ex:1_nonlin}	Consider a market with demand function $$D(p)=10-p-0.2p^2$$ and three producers  with production cost functions 
		$$C_1(q)=\frac{1}{2}q+\frac{1}{4}q^2\,,\qquad C_2(q)=q+2q^2\,,\qquad C_3(q)=2q+\frac{1}{4}q^2\,.$$ %Let $K=1$. %{\color{red}Theorem \ref{th:pab_eq} guarantees} that there exists at least one Nash equilibrium $S^*=(S^*_1, S^*_2, S^*_3)$ of the form $S_i(p)=[p-x_i^*]_+$, for all $i$ in $\mathcal N$, for a vector of activation prices $x^*$. In particular, 
		According to Proposition \ref{prop:restr_game}, %existence of Nash equilibria in the $\nexp$-PAB auction game is linked to existence in the $\aff$-PAB auction game. In particular, the $\nexp$-PAB auction game admits at least one Nash equilibrium if and only if there exists
		every Nash equilibrium of the $\nexp$-PAB auction game determines a vector of activation prices $x^*$ leading to a Nash equilibrium $S^{*}$ of the form $S^{*}_i(p)=[p-x_i^*]_+$, for every $i$ in $\mathcal N$. In this example, we obtain a Nash equilibrium when $x^*=[1.82,2.73, 2.65]$, thus obtaining $p^*(x^*)=3.68$ (see Figure \ref{fig:ex_nonlin1}). Notice that $x^{*}$ is a Nash equilibrium of the activation price game and corresponds to an infinite number of Nash equilibria of the $\nexp$-PAB auction game. Indeed, any non-decreasing function that is continuous and equal to $S^*$ up to $p^*$ is a Nash equilibrium of the $\nexp$-PAB auction game (see the right of Figure \ref{fig:ex_nonlin1}).
	\end{example}

	\subsection{Properties of the activation price games}\label{ss:activgame}

		Given a producer $i$ in $\mc N$ and an activation price profile $x_{-i}$ in $[0, \hat{p}]^{\mathcal N\setminus\{i\}}$ of the other producers, let
		\begin{equation}\label{hatpi} \hat p_i(x_{-i})=\ov p(\hat p, x_{-i})\,,\end{equation}
		be the market clearing price of the activation price game when producer $i$ sets its activation price equal to the maximum price $\hat p$ while the rest of the producers' activation prices profile is $x_{-i}$. As shown below, such value represents the threshold above which producer $i$ will be better off selling a $0$ quantity of good, assuming that the rest of the producers play $x_{-i}$. We start with a preliminary result that quantifies the effect of the entrance of producer $i$ in the market.
		\begin{proposition}\label{prop:increasing}
			Consider the activation price game with set of producers $\mathcal N$, production cost functions $C_i$ for every $i$ in $\mc N$,  and demand function $D$. 
			Then,  for every producer $i$ in $\mc N$: 
			\begin{enumerate}
				\item[(i)]  the function $x_i\mapsto \ov p(x_i, x_{-i})$ is non-decreasing 
				for every $x_{-i}$ in $[0,\hat p]^{\mc N\setminus\{i\}}$;  
				\item[(ii)] $\hat p_i(x_{-i})\ge\ov p(x)$;
				\item[(iii)] for every $x$ in $\mathcal{X}$, either 
				\begin{enumerate}
					\item[(a)] $\hat p_i( x_{-i})>\ov p(x)>x_i\,,$ or
					%	\end{enumerate}
				%		or 	\begin{enumerate}		
					\item[(b)] $\hat p_i(x_{-i})=\ov p(x)\leq x_i\,,$
				\end{enumerate}
				holds true. 
			\end{enumerate}
		\end{proposition}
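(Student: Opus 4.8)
The plan is to prove the three items in order, all resting on one elementary fact: for a fixed profile $x_{-i}$ of the other producers, the market-clearing price is the unique zero of a \emph{strictly} decreasing function of $p$. Concretely, for $p\in[0,\hat p]$ and a parameter $y\in[0,\hat p]$ I would introduce
\[
\Phi(p,y):=D(p)-\sum_{j\ne i}[p-x_j]_+-[p-y]_+ .
\]
Since $D$ is continuous and strictly decreasing with $D(0)>0$, while each of $p\mapsto[p-x_j]_+$ and $p\mapsto[p-y]_+$ is continuous and non-decreasing, the map $p\mapsto\Phi(p,y)$ is continuous and strictly decreasing, with $\Phi(0,y)=D(0)>0$ and $\Phi(\hat p,y)\le 0$; hence it has a unique zero in $[0,\hat p]$, which by \eqref{eq:equilibrium_r} is exactly $\ov p(y,x_{-i})$. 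The second, equally elementary, observation is that $y\mapsto[p-y]_+$ is non-increasing, so $y\mapsto\Phi(p,y)$ is non-decreasing for each fixed $p$.

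Item (i) is then a one-line comparison argument: for $y<y'$, set $p=\ov p(y,x_{-i})$ and $p'=\ov p(y',x_{-i})$; then $\Phi(p,y')\ge\Phi(p,y)=0=\Phi(p',y')$, and strict monotonicity of $\Phi(\cdot,y')$ forces $p\le p'$. Item (ii) is immediate from (i) applied with $y=x_i\le\hat p=y'$, which gives $\ov p(x)=\ov p(x_i,x_{-i})\le\ov p(\hat p,x_{-i})=\hat p_i(x_{-i})$.

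For item (iii) I would split on the sign of $\ov p(x)-x_i$. If $\ov p(x)\le x_i$, then $[\ov p(x)-x_i]_+=0$, so \eqref{eq:equilibrium_r} evaluated at $x$ reduces to $D(\ov p(x))=\sum_{j\ne i}[\ov p(x)-x_j]_+$; since moreover $[p-\hat p]_+=0$ for every $p\le\hat p$, this is precisely the equation defining $\hat p_i(x_{-i})=\ov p(\hat p,x_{-i})$, so uniqueness of the market-clearing price gives $\hat p_i(x_{-i})=\ov p(x)$, which together with the case hypothesis is alternative (b). If instead $\ov p(x)>x_i$, then (ii) already gives $\hat p_i(x_{-i})\ge\ov p(x)$ and it only remains to exclude equality: if $p:=\ov p(x)=\hat p_i(x_{-i})$, then writing \eqref{eq:equilibrium_r} at $x$ and at $(\hat p,x_{-i})$ and evaluating both at $p$ yields $D(p)=\sum_{j\ne i}[p-x_j]_++(p-x_i)$ and $D(p)=\sum_{j\ne i}[p-x_j]_+$, hence $p-x_i=0$, contradicting $\ov p(x)>x_i$; thus $\hat p_i(x_{-i})>\ov p(x)>x_i$, which is alternative (a). Since the two cases $\ov p(x)\le x_i$ and $\ov p(x)>x_i$ are exhaustive and mutually exclusive, exactly one of (a), (b) holds. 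This argument has no real obstacle; the one place to be careful is the \emph{strict} decrease of $\Phi(\cdot,y)$, which uses that $D$ is strictly (not merely weakly) decreasing and is exactly what upgrades the weak comparison in (i) to the strict separation $\hat p_i(x_{-i})>\ov p(x)$ in (iii)(a).
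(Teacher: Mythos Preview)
Your proof is correct and follows essentially the same approach as the paper's: both rest on the strict monotonicity of $p\mapsto D(p)-\sum_j[p-x_j]_+$ (which you package as $\Phi$) to get uniqueness of the clearing price and the comparison in (i), and both handle (iii) by the same dichotomy, though you split on $\ov p(x)\lessgtr x_i$ while the paper splits on $\hat p_i(x_{-i})=\ov p(x)$ versus $\hat p_i(x_{-i})\ne\ov p(x)$---these are the same two cases with hypothesis and conclusion swapped, and the computations in each branch mirror each other.
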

		\begin{proof}%{\color{red} %PROOF DA RIVEDERE }%[Proof. (Proposition \ref{prop:increasing})] 
			(i)	Let $x$ and  $\tilde x$ in $\mathcal X$ be two activation price profiles such that $x_i<\tilde x_i$ and $x_{-i}=\tilde x_{-i}$. Assume by contradiction that $\ov p(x)>\ov p(\tilde x)$.
			It follows from  \eqref{eq:equilibrium_r}  and the fact that the demand function  $D$ is strictly decreasing that
			$$ 0=D(\ov p(\tilde x))-\sum_{i=1}^n [\ov p(\tilde x)-\tilde x_i]_+> D(\ov p(x))-\sum_{i=1}^n [\ov p(x)-x_i]_+ =0\,,$$
			thus showing that $x_i\mapsto \ov p(x_i, x_{-i})$ is non-decreasing.
			
			(ii)  By definition \eqref{hatpi} and point (i), we have that 
			$$\hat p_i(x_{-i})=\ov p(\hat p,x_{-i})\ge \ov p(x_{i,x_{-i}})=\ov p(x)\,,$$
			thus proving the claim.
			
			%	
			%	. If $x, \tilde x\in\mathcal X$ are such that $x_i<\tilde x_i$ and $x_{-i}=\tilde x_{-i}$,  for some producer $i$,  it holds that 
			%		$$ D(\ov p(x))-\sum_{i=1}^n [\ov p(x)-\tilde x_i]_+\leq D(\ov p(x))-\sum_{i=1}^n [\ov p-x_i]_+ =0$$
			%		Since $D$ is non increasing, necessarily we must have $\ov p(\tilde x)\geq \ov p(x)$
			%	\end{proof}
		%{
			%	\begin{proposition}\label{residual-offer} The following fact hold true:
				%\begin{equation}\label{eq:eq_sign}
				%	\emph{sgn}(\hat p_i(x_{-i})-x_i)= \emph{sgn}(\ov p(x_i, x_{-i})-x_i)
				%\end{equation}
				%\end{proposition}
				%\begin{proof} 
				(iii) If $\hat p_i(x_{-i})=\ov p(x)$, then   \eqref{eq:equilibrium_r}, \eqref{hatpi}, and the fact that $x_i\le \hat p$ imply that 
				$$\sum_{j=1}^n[\ov p(x)-x_j]_+=D(\ov p(x))=D(\hat p_i(x_{-i}))=%[\hat p-x_i]_++
				\sum_{j\ne i}[\hat p_i(x_{-i})-x_j]_+=\sum_{j\ne i}[\ov p(x)-x_j]_+\,,$$
				so that  $[\ov p(x)-x_i]_+=0$, hence $\hat p_i(x_{-i})=\ov p(x)\le x_i$. 
				
				On the other hand, if $\hat p_i(x_{-i})\ne\ov p(x)$, then, by point (ii), it must be
				\be\label{must}\hat p_i(x_{-i})> \ov p(x)\,.\ee
				%	We claim that \eqref{must} implies that 
				%	\begin{equation}\label{eq:eq_sign2}
					%	%	\begin{aligned}
						%		\hat p_i(x_{-i})-x_i>0\quad \Leftrightarrow \quad \ov p(x)-x_i>0\,.
						%	%	\end{aligned}
					%	\end{equation}
				%Indeed, implication $\Leftarrow$ trivially follows from \eqref{must}. % by Proposition \ref{prop:increasing}. 
				%To prove implication $\Rightarrow$, observe that 
				Then, we have that 
				\begin{equation}\label{eq:comp}
					\ba{rcl}
					0&=&\ds D(\hat p_i(x_{-i}))-\sum_{j} [\hat p_i(x_{-i})- x_j]_+%-[\hat p_i(x_{-i})-\hat p]_+
					\\[7pt]
					&=&\ds D(\hat p_i(x_{-i}))-\sum_{j\neq i} [\hat p_i(x_{-i})- x_j]_+\\[7pt]
					&<&\ds D(\ov p(x))-\sum_{j\neq i} [\ov p(x)- x_j]_+\\[7pt]
					&=&\ds [\ov p(x)-x_i]_+	\,, 
					\ea
				\end{equation}
				where the first equality follows again from \eqref{eq:equilibrium_r} and \eqref{hatpi},
				the second one from the fact that $\hat p_i(x_{-i})=\ov p(\hat p, x_{-i})\le\hat p$
				, the central inequality follows from \eqref{must} and the fact that the demand function  $D$ is strictly decreasing, and the last equality again from \eqref{eq:equilibrium_r}. Equations \eqref{must} and \eqref{eq:comp} imply that  
				$$\hat p_i(x_{-i})>\ov p(x)>x_i\,,$$
				thus completing the proof. 
				%
				%We conclude by proving
				%%\begin{equation}\label{eq:eq_sign3}
				%	%	\begin{aligned}
					%$$		\hat p_i(x_{-i})-x_i=0\quad \Leftrightarrow \quad \ov p(x_i,x_{-i})-x_i=0\,.$$
					%		%	\end{aligned}
				%%\end{equation}
				%Notice that if $x_i=\hat p_i(x_{-i})$, then $\ov p(\hat p_i(x_{-i}),x_{-i})=\hat p_i(x_{-i})$, as producer $i$ is bidding the market-clearing price and therefore sells a zero quantity. Similarly, if $x_i= \ov p(x_i,x_{-i})$, then $x_i=\hat p_i(x_{-i})$.
				%%Notice now that under the assumption $\hat p_i(x_{-i})>\ov p(x)$ the right-hand side of \eqref{eq:eq_sign3} can never be true. Indeed, if $x_i=\hat p_i(x_{-i})$ then it follows from \eqref{eq:comp} that $\ov p(x)>x_i$ which yields a contradiction. %(In alternativa, forse si puÃ² semplicemente usare la definitione di prezzo di equilibrio e di $p_i$)
			\end{proof}
			We are now ready to prove 
			%existence of Nash equilibria in the activation price game. As the strategy space $[0, \hat{p}]$ is compact and convex and the utility functions  are continuous in all their entries, our goal is to apply Proposition 20.3 in  \cite{osborne1994course}. % (pp. 19-20), or equivalently Kakutani's fixed-point theorem \cite{kakutani} as proposed in \cite{debreu, glicksberg, fan}. 
			%In order to do this, we need to prove 
			quasi-concavity of producer $i$'s utility $u_i(x_i,x_{-i})$ in its activation price $x_i$ for every activation profile $x_{-i}$ of the other producers.  %, in which we prove that $u_i$ is quasi-concave in $x_i\in[0, \hat{p}]$ for every choice of $x_{-i}\in[0, \hat{p}]^{\mathcal N\setminus\{i\}}$. 
			Recall that a function $f: \mc Z \rightarrow \R$ is \textit{quasi-concave} if, for every $c$ in $\R$, the upper level set $\{ x \in \mc Z \mid f(z) \geq c\}$ is convex. When, as is our case, the domain of  $f$ is an interval $\mc Z =[0, \hat{p}]\subseteq\R$, quasi-concavity can be proved by showing that $f$ is unimodal, i.e., that there exists $z^*$ in $[0, \hat{p}]$ such that $f(z)$ is monotonically non-decreasing for $z$ in $[0,z^*]$ and monotonically non-increasing for $z$ in $[z^*,\hat p]$.
			
			\begin{proposition}\label{prop:quasi-concavity}		Consider the activation price game 
				with set of producers $\mathcal N$, production cost functions $C_i$ for every $i$ in $\mc N$, and  demand function $D$. 
				Then, for every producer $i$ in $\mathcal N$ and activation price profile  $x_{-i}$ in $[0, \hat{p}]^{\mathcal N\setminus\{i\}}$ of the other producers:
				\begin{enumerate}
					\item[(i)] $x_i\mapsto \apu_i(x_i,x_{-i})$ is quasi-concave on $[0,\hat p]$ and constant on $[\hat p_i(x_{-i}),\hat p]$;
					\item[(ii)] $x_i\mapsto \apu_i(x_i,x_{-i})$ has a unique maximum point in  $[0, \hat p_i(x_{-i})]$, i.e., $$|\mathcal B_i(x_{-i})\cap [0, \hat p_i(x_{-i})]|=1\,;$$
					\item[(iii)] $\hat p_i(x_{-i})\in \mathcal B_i(x_{-i})$ if and only if $C_i'(0)\geq \hat p_i(x_{-i})$\,.
				\end{enumerate}
			\end{proposition}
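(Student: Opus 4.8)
The plan is to reparametrize producer $i$'s choice by the quantity it actually sells rather than by its activation price; this collapses all three claims to a single one‑dimensional strict‑concavity statement.

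Fix $i$ and the others' activation profile $x_{-i}$, and write $g(p):=\sum_{j\neq i}[p-x_j]_+$ for the residual supply, which is continuous, non‑decreasing, convex and piecewise linear. For $q\ge0$ let $\rho(q)$ be the unique price solving $D(\rho)=q+g(\rho)$: it exists and is unique because $D$ is strictly decreasing and $g$ non‑decreasing, it is continuous and strictly decreasing in $q$, and $\rho(0)=\hat p_i(x_{-i})$ directly from \eqref{hatpi}. Put $\hat q_i:=\ov p(0,x_{-i})>0$ and $\Phi(x_i):=[\ov p(x_i,x_{-i})-x_i]_+$. Using Proposition~\ref{prop:increasing}(iii) one verifies that $\Phi$ is continuous and non‑increasing on $[0,\hat p]$, that it restricts to a strictly decreasing homeomorphism of $[0,\hat p_i(x_{-i})]$ onto $[0,\hat q_i]$ — strictness because on the active range the implicit derivative of $x_i\mapsto\ov p(x_i,x_{-i})$ equals $1/(1+g'-D')\in(0,1)$, so that of $\Phi$ is negative — and that $\Phi\equiv0$ on $[\hat p_i(x_{-i}),\hat p]$. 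Moreover, whenever $\Phi(x_i)>0$, the same part of Proposition~\ref{prop:increasing} identifies $\ov p(x_i,x_{-i})$ with $\rho(\Phi(x_i))$, and substituting into \eqref{eq:u_r} yields
\[
\apu_i(x_i,x_{-i})=v_i(\Phi(x_i))\,,\qquad v_i(q):=\rho(q)\,q-\tfrac12 q^2-C_i(q)\,,
\]
for all $x_i\in[0,\hat p]$, with $\apu_i(x_i,x_{-i})=v_i(0)=-C_i(0)$ on $[\hat p_i(x_{-i}),\hat p]$.

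The core step, which I expect to be the delicate one, is to show that $v_i$ is strictly concave on $[0,\hat q_i]$. Implicit differentiation of $D(\rho(q))=q+g(\rho(q))$ gives $\rho'(q)=\bigl(D'(\rho(q))-g'(\rho(q))\bigr)^{-1}<0$ at all but finitely many $q$ ($\rho$ is piecewise $C^1$, with corners only where $\rho(q)$ crosses some $x_j$). Since $D$ is concave, $q\mapsto D'(\rho(q))$ is non‑decreasing (a non‑increasing function composed with the decreasing $\rho$); since $g$ is convex, $q\mapsto g'(\rho(q))$ is non‑increasing; hence $D'(\rho(q))-g'(\rho(q))$ is non‑decreasing and negative, so $\rho'$ is non‑increasing between corners and — this is the point requiring care — jumps \emph{downward} at each corner, i.e.\ $\rho$ is concave on $[0,\hat q_i]$. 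It follows that $q\mapsto\rho(q)q$ has (one‑sided) derivative $\rho'(q)q+\rho(q)$, which is strictly decreasing because its derivative is $\le2\rho'(q)<0$ between corners and jumps downward at corners; hence $\rho(q)q$ is strictly concave, and adding the strictly concave $-\tfrac12 q^2$ and the concave $-C_i(q)$ (as $C_i$ is convex) shows $v_i$ is strictly concave, with $v_i'$ strictly decreasing (indeed $v_i''\le-1$ where defined).

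Granting strict concavity of $v_i$, the three assertions follow quickly. For~(i): a strictly concave function is quasi‑concave, and composing a quasi‑concave function with a continuous non‑increasing map preserves quasi‑concavity (the $\Phi$‑preimage of an interval is an interval), so $\apu_i(\cdot,x_{-i})=v_i\circ\Phi$ is quasi‑concave on $[0,\hat p]$; and it is constant, equal to $v_i(0)$, on $[\hat p_i(x_{-i}),\hat p]$ since $\Phi$ vanishes there. For~(ii): $v_i$ has a unique maximizer $q_i^*$ on $[0,\hat q_i]$; since $\Phi$ is a strictly decreasing bijection from $[0,\hat p_i(x_{-i})]$ onto $[0,\hat q_i]$ and the maximum of $\apu_i$ over $[0,\hat p]$ equals $\max\{v_i(q_i^*),v_i(0)\}=v_i(q_i^*)$, the point $\Phi^{-1}(q_i^*)$ is the unique maximizer of $\apu_i(\cdot,x_{-i})$ in $[0,\hat p_i(x_{-i})]$, i.e.\ $|\mathcal B_i(x_{-i})\cap[0,\hat p_i(x_{-i})]|=1$. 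For~(iii): $\hat p_i(x_{-i})\in\mathcal B_i(x_{-i})$ iff $\apu_i(\hat p_i(x_{-i}),x_{-i})=v_i(0)$ is the maximum of $v_i$ on $[0,\hat q_i]$, iff $q_i^*=0$, iff $v_i$ is non‑increasing on $[0,\hat q_i]$ (by strict concavity), iff $v_i'(0^+)\le0$; since $v_i'(0^+)=\rho(0)-C_i'(0)=\hat p_i(x_{-i})-C_i'(0)$, this is equivalent to $C_i'(0)\ge\hat p_i(x_{-i})$.
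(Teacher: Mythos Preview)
Your argument is correct and takes a genuinely different route from the paper's. The paper works directly in the activation-price variable: it sets $u(z)=\ov u_i(z,x_{-i})$ and $\varphi(z)=\ov p(z,x_{-i})$, computes $u'$ and $u''$ via the implicit function theorem, shows that $u''(z^*)<0$ at every stationary point $z^*$ within each smooth piece, and then carries out an interval-by-interval case analysis of how the one-sided derivatives of $u$ compare across the breakpoints $z_k$ (labelling each piece as a $(+)$, $(-)$, $(+\,0\,-)$, etc.\ interval) to conclude unimodality. You instead change variables to the quantity $q=\Phi(x_i)$ actually sold by producer $i$ and prove the stronger fact that the induced profit $v_i(q)=\rho(q)q-\tfrac12q^2-C_i(q)$ is \emph{strictly concave} in $q$, which you reduce to concavity of the residual inverse demand $\rho$. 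Your approach is cleaner: the piecewise structure is absorbed entirely into the concavity of $\rho$ (a one-line monotonicity argument on $D'-g'$ plus a jump check), you obtain strict concavity rather than only quasi-concavity in the natural economic variable, and parts (ii) and (iii) become immediate consequences of a single first-order condition $v_i'(0^+)=\hat p_i(x_{-i})-C_i'(0)$. The paper's direct approach avoids the reparametrization bookkeeping (verifying that $\Phi$ is a homeomorphism onto $[0,\hat q_i]$ and that $\ov p=\rho\circ\Phi$), but at the cost of a noticeably longer sign-tracking argument.
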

			The proof of Proposition \ref{prop:quasi-concavity} is presented  in Appendix \ref{sec:proof-prop:quasi-concavity}. 
			The main technical challenges in proving Proposition \ref{prop:quasi-concavity} stem from the implicit way the utility function is defined and in the fact that utility functions are not everywhere differentiable. By making use of the Implicit Function Theorem, we show that, within any interval where the function $x_i\mapsto \apu_i(x_i,x_{-i})$ is differentiable, either its derivative $\frac{\partial}{\partial x_i}\apu_i(x_i,x_{-i})$ keeps the same sign or it undergoes a single sign change from positive to negative. This is proved by establishing that the utility is concave in the stationary points and relies on the assumptions of concavity of demand function $D(p)$ and  convexity of the production cost functions $C_i(q_i)$. Additionally, we establish that at points of non-differentiability, if there is a change
			in the derivative's sign, it necessarily transitions from positive to negative or remains positive. 
			%{\color{red} AGGIUNGERE COMMENTI E CONTROLLARE DIMOSTRAZIONE PROOF IN APPENDICE.}
			
			{Proposition \ref{prop:quasi-concavity} has fundamental implications for existence and characterization of Nash equilibria in the activation price  game. Notice that, in Proposition \ref{prop:quasi-concavity}(iii), the best-response is characterized in terms of the threshold price $\hat p_{-i}$. In the following result, we show that this condition can be written equivalently in terms of the market-clearing price $\ov p$. } %Let us first prove one last result.
			\begin{proposition}\label{pr:costs_br}
				Consider the activation price game with set of producers $\mathcal N$, production cost functions $C_i$ for every $i$ in $\mc N$, and demand function $D$. 
				Then, for every activation price profile $x$ in $\mathcal X$ and producer $i$ in $\mathcal N$ such that 
				\be\label{xiinBi}x_i\in\mathcal B_i(x_{-i})\,,\ee 
				the following conditions are equivalent:
				\begin{itemize}
					\item[(a)] $x_i\geq \hat p_i(x_{-i})=\ov p(x)$;
					\item[(b)] $C_i'(0)\geq \hat p_i(x_{-i})$;
					\item[(c)] $C_i'(0)\geq \ov p(x)$.		
				\end{itemize}
			\end{proposition}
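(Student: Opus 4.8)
The plan is to prove the three conditions equivalent by establishing the cycle $(a)\Rightarrow(b)\Rightarrow(c)\Rightarrow(a)$. The main ingredients are Proposition~\ref{prop:increasing}---in particular the dichotomy in its part~(iii), by which either $\hat p_i(x_{-i})>\ov p(x)>x_i$ (producer $i$ active; call this case~A) or $\hat p_i(x_{-i})=\ov p(x)\le x_i$ (case~B)---together with Proposition~\ref{prop:quasi-concavity}. Note that condition~(a) is exactly ``case~B holds'', and that the equality $\hat p_i(x_{-i})=\ov p(x)$ follows as soon as $x_i\ge\hat p_i(x_{-i})$ is known; I will use this repeatedly.

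For $(a)\Rightarrow(b)$ I would argue that, under~(a), the best response $x_i$ lies in $[\hat p_i(x_{-i}),\hat p]$, an interval on which $x_i\mapsto\apu_i(x_i,x_{-i})$ is constant by Proposition~\ref{prop:quasi-concavity}(i); hence $\hat p_i(x_{-i})$ also attains the maximal value, so $\hat p_i(x_{-i})\in\mathcal B_i(x_{-i})$, which by Proposition~\ref{prop:quasi-concavity}(iii) is equivalent to $C_i'(0)\ge\hat p_i(x_{-i})$, i.e.~(b). For $(b)\Rightarrow(c)$, Proposition~\ref{prop:quasi-concavity}(iii) first rewrites~(b) as $\hat p_i(x_{-i})\in\mathcal B_i(x_{-i})$; then, if one had $x_i<\hat p_i(x_{-i})$, both $x_i$ and $\hat p_i(x_{-i})$ would be maximizers of $\apu_i(\cdot,x_{-i})$ inside $[0,\hat p_i(x_{-i})]$, contradicting uniqueness of the maximizer there (Proposition~\ref{prop:quasi-concavity}(ii)); so $x_i\ge\hat p_i(x_{-i})$, we are in case~B, $\hat p_i(x_{-i})=\ov p(x)$, and~(b) turns into~(c).

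The implication $(c)\Rightarrow(a)$ is where I expect the real work to be, since---unlike the two reductions above---it does not follow by bookkeeping from Propositions~\ref{prop:increasing} and~\ref{prop:quasi-concavity} alone; this is the main obstacle. I would proceed by contradiction: if~(a) fails, the dichotomy puts us in case~A, so producer $i$ is active and produces $q:=\ov p(x)-x_i>0$, with payoff $\apu_i(x_i,x_{-i})=\ov p(x)\,q-\tfrac12 q^2-C_i(q)$. Comparing with the deviation $x_i'=\hat p$---under which the market-clearing price becomes $\ov p(\hat p,x_{-i})=\hat p_i(x_{-i})\le\hat p$, so producer $i$ produces $[\hat p_i(x_{-i})-\hat p]_+=0$ and earns $-C_i(0)$---the best-response hypothesis $x_i\in\mathcal B_i(x_{-i})$ gives $\ov p(x)\,q-\tfrac12 q^2-C_i(q)\ge -C_i(0)$, hence $\ov p(x)\,q-\tfrac12 q^2\ge C_i(q)-C_i(0)$. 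Using convexity of $C_i$ through $C_i(q)-C_i(0)=\int_0^q C_i'(s)\,\dd s\ge q\,C_i'(0)$ (as $C_i'$ is non-decreasing) and dividing by $q>0$, I obtain $C_i'(0)\le\ov p(x)-\tfrac12 q<\ov p(x)$, contradicting~(c). Thus~(a) holds, which closes the cycle and completes the proof.
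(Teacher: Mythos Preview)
Your proof is correct and follows essentially the same route as the paper's---the cycle $(a)\Rightarrow(b)\Rightarrow(c)\Rightarrow(a)$ via Propositions~\ref{prop:increasing} and~\ref{prop:quasi-concavity}, with the convexity estimate $C_i(q)-C_i(0)\ge q\,C_i'(0)$ doing the real work in $(c)\Rightarrow(a)$. The only differences are presentational: for $(b)\Rightarrow(c)$ the paper simply invokes $\hat p_i(x_{-i})\ge\ov p(x)$ from Proposition~\ref{prop:increasing}(ii) rather than your detour through~(a), and for $(c)\Rightarrow(a)$ the paper splits into the sub-cases $C_i'(0)\ge\hat p_i(x_{-i})$ and $\hat p_i(x_{-i})>C_i'(0)\ge\ov p(x)$, whereas your single contrapositive argument covers both at once and is slightly cleaner.
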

			\begin{proof}
				By Proposition \ref{prop:quasi-concavity}(i), (a) and \eqref{xiinBi} imply that $\hat p_i(x_{-i})\in\mc B_i(x_{-i})$, so that (b) follows by Proposition \ref{prop:quasi-concavity}(iii). This proves the implication (a)$\Rightarrow$(b).
				
				The implication (b)$\Rightarrow$(c) follows from Proposition \ref{prop:increasing}(ii). 
				
				To prove the implication (c)$\Rightarrow$(a), we can argue as follows. If $C_i'(0)\geq \hat p_i(x_{-i})$ then, by Proposition \ref{prop:quasi-concavity}(iii), we have that $\hat p_i(x_{-i})\in\mc B_i(x_{-i})$, so that \eqref{xiinBi} and  Proposition \ref{prop:quasi-concavity}(ii) imply that $x_i\geq \hat p_i(x_{-i})$ and (a) follows from Proposition \ref{prop:increasing}(iii). Otherwise, if 
				\begin{equation}\label{eq:case2}
					\hat p_i(x_{-i})>C_i'(0)\geq \ov p(x)\,,
				\end{equation}
				then Proposition \ref{prop:quasi-concavity} and \eqref{xiinBi} imply that 
				$ x_i<\hat p_i(x_{-i})$, so that Proposition \ref{prop:increasing} (iii) yields 
				\be\label{pixi} x_i<\ov p(x)\,,\ee
				whereas from \eqref{eq:u_r} and the fact that $C_i(z)\geq C_i'(0)z$ because of convexity of $C_i$, we obtain that
				$$
				\ba{rcl}\ds\apu_i(x_i,x_{-i})
				&=&\ds\ov p(x)[\ov p(x)-x_i]_+-\frac{1}{2}[\ov p(x)-x_i]_+^2-C_i\left([\ov p(x)-x_i]_+\right)\\[7pt]
				&\leq&\ds (\ov p(x)-C_i'(0))[\ov p(x)-x_i]_+-\frac{1}{2}(\ov p(x)-x_i)^2 \\[7pt]
				&\leq&\ds -\frac{1}{2}(\ov p(x)-x_i)^2\,.\ea
				$$
				The above and \eqref{pixi} imply that $\apu_i(x_i,x_{-i})<0=\apu_i(\hat p,x_{-i})\,,$ thus contradicting \eqref{xiinBi}.
			\end{proof}

			\subsection{Existence and characterization of Nash equilibria}% of the $\aff$-PAB auction game}
		\label{ss:existence}
		%We shall now prove existence of Nash equilibria , hence for the $\aff$-PAB auction. %Thanks to Lemma \ref{lemma:K=1} and Proposition \ref{prop:restr_game}, this will imply existence of Nash equilibria in the $\nexp$-PAB auction game. 

		The following result establishes existence of Nash equilibria for the activation price game and characterizes % let us denote
		the set of \textit{active players}, which is defined as \begin{equation}\label{eq:act_ag}
			\act(x):=\{i\in \mathcal N\,: \,x_i < \ov p(x)\}\,.
		\end{equation}
		
		%We can now prove the main result of this section. 
		\begin{theorem}\label{th:existence}
			Consider a set of producers $\mathcal N$, production cost functions $C_i$ for every $i$ in $\mc N$, and  demand function $D$. 
			Then, 
			\begin{enumerate}
				\item[(i)] the activation price game admits a Nash equilibrium.
			\end{enumerate}
			Moreover, 
			\begin{enumerate}
				\item[(ii)]
				the set of active players %producers
				in any Nash equilibrium $x^*$ of the activation price game  is 
				\be\label{active}\act(x^*)=\{i \in \mathcal N\,:\, C_i'(0)<\ov p(x^*) \}\,.\ee
			\end{enumerate}
			As a consequence: 
			\begin{enumerate}
				\item[(iii)] if $C_i'(0)\leq C_j'(0)$ and
				$j\in \act(x^*)$, then $i \in \act(x^*)$;
				\item[(iv)]	if $C_i'(0)=c<\hat p$ for every $i$ in $\mathcal N$, then $\act(x^*)=\mathcal N$.
			\end{enumerate}
			%	The  $\aff$-PAB auction game admits pure strategy Nash equilibria. Furthemore, if  $x^*$ is a Nash equilibrium of the $\aff$-PAB auction game, then
			%		\begin{equation}\label{active}
				%	{\tcr{\bar\act(x^*)}=\{i \in \mathcal N\mid C_i'(0)<\ov p(x^*) \}\,.}
				%	\end{equation}
			%	$$\tcr{\bar\act(x^*)}=\{i \in \mathcal N\mid C_i'(0)<\ov p_{-i}(x^*_{-i}) \}\,.$$	The $\aff$-PAB auction game $\mathcal U_r$ admits pure strategy Nash equilibria.
		\end{theorem}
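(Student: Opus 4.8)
The plan is to dispatch the four claims in order. Claim~(i) carries essentially all of the remaining work, since it is an existence statement, whereas claims (ii)--(iv) will follow quickly from the best-response characterizations already established, namely Propositions~\ref{prop:increasing}, \ref{prop:quasi-concavity} and \ref{pr:costs_br}.

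For (i), I would appeal to the classical Nash existence theorem for games with compact convex strategy sets and continuous payoffs that are quasi-concave in the own strategy (Debreu--Glicksberg--Fan). Three hypotheses must be checked. First, each player's action set $[0,\hat p]$ is a non-empty compact convex subset of $\R$. Second, quasi-concavity of $x_i\mapsto\apu_i(x_i,x_{-i})$ on $[0,\hat p]$ for every $x_{-i}$ is exactly Proposition~\ref{prop:quasi-concavity}(i). Third, and this is the only point requiring a new (short) argument, each $\apu_i$ must be continuous on $\mc X$; by \eqref{eq:u_r} this reduces to continuity of the market-clearing map $x\mapsto\ov p(x)$. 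I would establish the latter by a topological implicit-function argument, which is robust to the lack of differentiability of the involved functions: the map $G(p,x):=D(p)-\sum_j[p-x_j]_+$ is jointly continuous, strictly decreasing in $p$, with $G(0,x)=D(0)>0$ and $G(\hat p,x)\le 0$, so $\ov p(x)$ is its unique zero in $[0,\hat p]$; if $x^{(k)}\to x$, then $(\ov p(x^{(k)}))$ lies in the compact interval $[0,\hat p]$ and, by continuity of $G$ and uniqueness of the zero, every one of its limit points equals $\ov p(x)$, whence $\ov p(x^{(k)})\to\ov p(x)$. Composing $\ov p$ with the continuous maps $t\mapsto[t]_+$ and $C_i$ and with arithmetic operations shows each $\apu_i$ is continuous, and the existence theorem then yields a pure-strategy Nash equilibrium. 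The main (mild) obstacle here is making the continuity argument clean despite the non-differentiability; the topological route above avoids any appeal to the implicit function theorem.

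For (ii), I would fix a Nash equilibrium $x^*$, so that $x_i^*\in\mc B_i(x^*_{-i})$ for every $i\in\mc N$, and apply Proposition~\ref{pr:costs_br} at $x=x^*$ for each $i$. By Proposition~\ref{prop:increasing}(iii), the statement $i\notin\act(x^*)$, i.e.\ $x_i^*\ge\ov p(x^*)$, is precisely condition (a) of Proposition~\ref{pr:costs_br} (the case $\hat p_i(x^*_{-i})=\ov p(x^*)\le x_i^*$). The equivalence (a)$\Leftrightarrow$(c) then gives $i\notin\act(x^*)$ if and only if $C_i'(0)\ge\ov p(x^*)$; taking complements over $i\in\mc N$ produces exactly the identity \eqref{active}.

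Claims (iii) and (iv) are then immediate consequences of (ii). For (iii): if $j\in\act(x^*)$ then $C_j'(0)<\ov p(x^*)$ by (ii), so $C_i'(0)\le C_j'(0)<\ov p(x^*)$ and hence $i\in\act(x^*)$. For (iv): let $c$ be the common value of the $C_i'(0)$ and suppose, toward a contradiction, that $c\ge\ov p(x^*)$; then (ii) forces $\act(x^*)=\emptyset$, so $[\ov p(x^*)-x_j^*]_+=0$ for all $j$ and \eqref{eq:equilibrium_r} gives $D(\ov p(x^*))=0$, which (as $D$ is continuous, strictly decreasing, and positive on $[0,\hat p)$) forces $\ov p(x^*)=\hat p$; but then $c\ge\hat p$, contradicting $c<\hat p$. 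Therefore $c<\ov p(x^*)$ and (ii) yields $\act(x^*)=\mc N$.
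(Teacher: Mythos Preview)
Your proposal is correct and follows essentially the same route as the paper: existence via the Debreu--Glicksberg--Fan theorem (compact convex action sets, continuity, and the quasi-concavity from Proposition~\ref{prop:quasi-concavity}(i)), the characterization (ii) via Proposition~\ref{pr:costs_br}, and (iii)--(iv) as direct corollaries. Your explicit topological argument for the continuity of $x\mapsto\ov p(x)$ is a welcome addition that the paper leaves implicit, and your contrapositive phrasing of (iv) is a minor but equivalent variant of the paper's contradiction.
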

		\begin{proof}
			(i) Since the action space of every player in the activation price game is the compact convex set $[0,\hat p]$, existence of Nash equilibria follows from continuity of the utility functions $\ov u_i(x)$ and their quasi-concavity in $x_i$ (Proposition \ref{prop:quasi-concavity}(i)) by a classical result in game theory, see e.g., Proposition 20.3 pp.~19--20 in \cite{osborne1994course}.  
			
			(ii) The characterization \eqref{active} follows from Proposition \ref{pr:costs_br}. 
			
			(iii) This follows directly from \eqref{active}.
			
			(iv) Suppose that $x^*$ is a Nash equilibrium and let $\ov p(x^*)$ be the corresponding market clearing price.  By contradiction, assume that $\ov p(x^*)\ge\hat p$. Then, \eqref{eq:equilibrium_r} implies that $0\ge D(\ov p(x^*))=\sum_{i=1}^n[\ov p(x^*)-x^*_i]_+$ so that $x^*_i=\ov p(x^*)=\hat p$ for every producer $i$ in $\mathcal N$. By Proposition \ref{pr:costs_br}, this would contradict the assumption $C_i'(0)=c<\hat p$ for all $i$ in $\mathcal N$. Hence, we necessarily have that $\ov p(x^*)<\hat p$. This implies that $\bar\act(x^*)$ is nonempty.  It then follows from \eqref{active} and the assumption that $C_i'(0)=c$ for every $i$ in $\mc N$ that $\act(x^*)=\mc N$. 
		\end{proof}

		Thanks to Proposition \ref{prop:restr_game} and Proposition \ref{prop:K=1}, existence of Nash equilibria of the  $\affone$-PAB  auction game guarantees existence of Nash equilibria of the $\lip$-PAB auction game. Precisely, we obtain the following conclusive result. %This observations leads 
		\begin{corollary}\label{coro:pab_eq}
			For every $K>0$, the $\lip$-PAB auction game with set of producers $\mathcal N$, production cost functions $C_i$ for every $i$ in $\mc N$, and  demand function $D$ 
			%The $\lip$-PAB auction 
			admits at least one Nash equilibrium $S^*$ having the form $$S^*_i(p)=K[p-x_i]_+\,,\qquad \forall p \in [0, p^*]\,,$$ for every producer $i$ in $\mathcal N$, where $p^*=p^*(S)$ is the market clearing price \eqref{eq:equilibrium} and 
			$x_i $ in $[0, \hat{p}]$ is the activation price. Furthermore, for every Nash equilibrium $S^*$,  
			$$
			S^*_i(p^*) >0 \quad \Leftrightarrow \quad C_i'(0)<p^*\,.
			$$
			%	 \tcr{the set of active players is
				%	 \be\label{active_S}
				%	 	 \act(S^*):=\{i\in \mathcal N\,: \,S_i(p^*) >0\}=\{i \in \mathcal N\,:\, C_i'(0)<p^*(S^*) \}\,.
				%	 \ee
				%As a consequence: 
				%\begin{enumerate}
				%	\item[(i)] if $C_i'(0)\leq C_j'(0)$ and
				%	$j\in \act(S^*)$, then $i \in \act(S^*)$;
				%	\item[(ii)]	if $C_i'(0)=c<\hat p$ for every $i$ in $\mathcal N$, then $\act(S^*)=\mathcal N$.
				%\end{enumerate} 
				%}
		\end{corollary}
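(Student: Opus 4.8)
The plan is to obtain the statement purely by assembling three results already established in the excerpt: the existence part of Theorem~\ref{th:existence}, the correspondence between Nash equilibria of the activation price game and of the $\nexp$-PAB auction game (Proposition~\ref{prop:restr_game}), and the rescaling isomorphism between $\nexp$-PAB and $\lip$-PAB auction games (Proposition~\ref{prop:K=1}). For the existence claim I would first fix $K>0$ and pass to the transformed data $C_i^1$, $D^1$ from \eqref{new-game}. Theorem~\ref{th:existence}(i), applied to the activation price game associated with $C_i^1$ and $D^1$, yields a Nash equilibrium $x^*$ in $\mc X$ of that game. Proposition~\ref{prop:restr_game}(i) then lifts $x^*$ to the Nash equilibrium $S^1$ of the $\nexp$-PAB auction game with data $C_i^1$, $D^1$ given by $S^1_i(p)=[p-x^*_i]_+$, and Proposition~\ref{prop:K=1}(iv) maps $S^1$ to the Nash equilibrium $S^*:=KS^1$ of the original $\lip$-PAB auction game, i.e., $S^*_i(p)=K[p-x^*_i]_+$ with $x^*_i\in[0,\hat p]$. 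Proposition~\ref{prop:K=1}(i) together with the definition of the activation price game identifies the market-clearing price as $p^*=p^*(S^*)=p^{*1}(S^1)=\ov p(x^*)$, and since only the affine form on $[0,p^*]$ is required, this settles the first part.

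For the equivalence $S^*_i(p^*)>0\iff C_i'(0)<p^*$, I would take an arbitrary Nash equilibrium $S^*$ of the $\lip$-PAB auction game and run the same chain backwards. By Proposition~\ref{prop:K=1}(iv), the configuration $K^{-1}S^*$ (obtained via the isomorphism \eqref{eq:transf}) is a Nash equilibrium of the $\nexp$-PAB auction game with data $C_i^1$, $D^1$, and by Proposition~\ref{prop:restr_game}(ii) there is a Nash equilibrium $x^*$ of the corresponding activation price game with $\ov p(x^*)=p^{*1}(K^{-1}S^*)=p^*(S^*)=:p^*$ and $K^{-1}S^*_i(p)=[p-x^*_i]_+$ on $[0,p^*]$. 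In particular $S^*_i(p^*)=K[p^*-x^*_i]_+$, so that $S^*_i(p^*)>0$ is exactly the condition $x^*_i<\ov p(x^*)$, i.e., $i\in\act(x^*)$. Theorem~\ref{th:existence}(ii), applied to the activation price game with data $C_i^1$, $D^1$, characterizes this membership as $(C_i^1)'(0)<\ov p(x^*)$; the proof then closes by the one-line chain-rule computation $(C_i^1)'(0)=C_i'(0)$ and by $\ov p(x^*)=p^*$.

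I do not expect a genuine obstacle here, since all the substantial work — quasi-concavity of the activation-price utilities, existence of the activation-price equilibrium, the characterization \eqref{active}, and the reductions of Section~\ref{ss:non-expansive} and Section~\ref{ss:char} — is already carried out in the preceding results. The only points deserving explicit (though trivial) verification are that the rescaling \eqref{new-game} leaves the marginal cost at zero invariant, $(C_i^1)'(0)=C_i'(0)$, and rescales the market-clearing price exactly as recorded in Proposition~\ref{prop:K=1}(i), so that the active-player characterization transfers verbatim from the activation price game to the $\lip$-PAB auction game.
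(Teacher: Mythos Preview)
Your proposal is correct and follows exactly the route the paper intends: the paper's own proof is the one-line statement that the corollary is a ``direct consequence of Theorem~\ref{th:existence}, Proposition~\ref{prop:restr_game} and Proposition~\ref{prop:K=1},'' and you have simply unpacked how these three results chain together, including the small check $(C_i^1)'(0)=C_i'(0)$ needed to transport the active-player characterization through the rescaling.
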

		\begin{proof} Direct consequence of Theorem \ref{th:existence}, Proposition \ref{prop:restr_game} and Proposition \ref{prop:K=1}. %\tcr{More precisely, according to  Proposition \ref{prop:restr_game}, for every Nash equilibrium $S^*$ there exists $x^*$ such that \eqref{Si=p-xi} and \eqref{p=p} hold. \eqref{Si=p-xi} leads to $\act (S^*)=\bar{\act} (x^*)$ and \eqref{p=p} leads to \eqref{active_S}. }}
		%	Theorem \ref{th:pab_eq} is a direct consequence of Proposition \ref{prop:restr_game} and Corollary \ref{cor:existence}. Indeed, in Proposition \ref{prop:restr_game}, we prove that Nash equilibria of the  $\lip$-PAB auction game can be fully characterized starting from Nash equilibria of the $\aff$-PAB auction game, while, in Corollary \ref{cor:existence}, we show that existence of Nash equilibria of the $\aff$-PAB auction game Nash equilibria is guaranteed. 
	\end{proof}
	%\tcb{Vecchia versione: %it holds
		%$$
		%S_i^*(p^*)>0 \quad \Leftrightarrow \quad  \tcr{C_i'(0)<\hat p_i(S^*_{-i})=p^*(0,S^*_{-i}) E' VERA? E A COSA SERVE?}\,.
		%$$}
	
	%for some $x_i $ in $[0, \hat{p}]$. %Such $x_i$ can be found solve the system   %More precisely
	%{\color{red} AGGIUNGERE COMMENTI ED ESEMPI}
	
	\section{Uniqueness of Nash equilibria}\label{sec:uniqueness}
	In this section, we focus on the special case of affine demand function, i.e.,
	\begin{equation}\label{eq:aff_dem}
		D(p)=\gamma (\hat p-p)\,,
	\end{equation}
	for some $\gamma >0$ and $\hat p>0$, and investigate uniqueness of Nash equilibria for the activation price game.		
	We show that, when the producers' marginal costs in zero are homogeneous, i.e., if 
	\be\label{eq:homogeneous-marginal-copsts}C_i'(0)=C_j'(0)\,,\qquad \forall i,j\in\mc N\,,\ee 
	then the Nash equilibrium $x^*$ of the activation price game is unique. 
	By Propositions \ref{prop:restr_game} and \ref{prop:K=1}, this implies that for every $K>0$, the Nash equilibrium $S^*$ of the $\lip$-PAB auction game is unique up to the clearing price $p^*$. 
	We also show examples of non-uniqueness of Nash equilibria arising when the producers marginal costs in zero are heterogeneous. %We conclude with a comprehensive analysis of the special case when the demand function is affine and the cost functions are quadratic. %We conclude discussing non-uniqueness in the general case.}

%	\subsection{A related family of super-modular games}
%	super-modular games \cite{Topkins:1979,Milgrom.ROberts:1990,Vives:1990,Topkins:1998, amir2005super} are an important class of games characterized		by âstrategic complementaritiesâ, that is, when one player increases the strategy,		the others have an incentive in doing the same. %These games enjoy very useful properties in terms of existence and structure of their Nash equilibria.  
%	While the $\aff$-PAB auction game is in general not super-modular, we can introduce a family of strongly related games that indeed, in many interesting cases, happen to be super-modular with a unique Nash equilibrium. Such games turn out to be instrumental to achieve finer results on the Nash equilibria of the original $\aff$-PAB auction game.
\subsection{All-active games}
Theorem \ref{th:existence} motivates the interest in studying Nash equilibria $x^*$ of the activation price game where all producers are active, i.e., % that belong to the set $\mc X_o$, that is, 
where $\act(x^*)=\mc N$. In this subsection, we introduce a related family of super-modular games (c.f.~\cite{Topkins:1979,Milgrom.ROberts:1990,Vives:1990,Topkins:1998, amir2005super}) that admit a unique Nash equilibrium. Such games turn out to be instrumental in studying uniqueness and properties of Nash equilibria of the original $\lip$-PAB auction game. %{\color{red} To lighten the notation, we shall assume $K=1$: as discussed in Remark \ref{rem:K1}, this implies no loss of generality.} %(oppure possiamo uniformare tutto con anche $K$)}
%Without loss of generality, let us assume that the players in $\mathcal N$ are ordered in increasing order of $C_i'(0)$, namely
%$$0\leq C_1'(0)\leq C_2'(0)\leq \dots \leq C_n'(0)	$$ 

\begin{definition}[All-active game]\label{def:all-active} Consider a set of producers $\mathcal N$, production cost functions $C_i$ for every $i$ in $\mc N$, and affine demand function \eqref{eq:aff_dem}. 
The \textit{all-active game} is a strategic game with player set $\N$, strategy space $[0,\hat p]$, and utility functions
\begin{equation}\label{eq:ut_aa}
	\begin{array}{rcl}\ua_i(x)&=&\ds\pa(x)(\pa(x)-x_i)-\frac12{(\pa(x)-x_i)^2}\\[10pt]&&\ds-\ca_i(\pa(x)-x_i)\\[10pt]
		&=&\ds\frac12\left(\pa(x)^2-x_i^2\right) -\ca_i(\pa(x)-x_i)\,,
	\end{array}
\end{equation}
for every producer $i$ in $\N$, where 
\begin{equation}\label{eq:c_ext}
	\ca_i(z)= \begin{cases}
		C_i(z)\quad &\text{if }z\geq 0\\
		C_i(0)+C_i'(0)z+\frac{1}{2}C_i''(0)z^2 \quad &\text{if }z<0 \,,
	\end{cases}
\end{equation}
is an extended production cost function and 
\begin{equation}\label{eq:p_tilde}
	\pa(x)=\frac{\gamma\hat p+\sum_{j=1}^n x_j}{n+\gamma}\,,
\end{equation}	
is the corresponding market-clearing price. 
\end{definition}

%For every producer $i$ in $\mathcal N$, let us define the following extension $\tilde C_i:\R\to\R$ of the production cost function $C_i$: 
%We shall define the \textit{all-active game} as the game with player set $\mathcal N=\{1, \dots, n\}$, action set $\mathcal A=[0, \hat{p}]$ and utility function, for all $i$ in $\mathcal N$,
%where $\pa:\R^n\rightarrow \R$ is defined as %implicitly through
%\begin{equation}\label{eq:p_tilde}
%	\pa(y)=\frac{N+y_i+\sum_{j\neq i} y_j}{n+\gamma}\,.
%\end{equation}	
%%\begin{equation}\label{eq:pq}
%%	D(\pa(y))=\sum\limits_{j=1}^n(\pa(y)-y_j)= q\pa(y)-\sum_{j=1}^qy_j\,.% \quad \Leftrightarrow \quad y(x)=\frac{N+\sum_{j=1}^{n} x_j}{n+\gamma}
%%\end{equation}
{We recall that a game with a compact action space $\mc A\subset \R$ is \textit{super-modular} when  its utilities are upper semi-continuous and satisfy the \textit{increasing difference property}, that is, for all $i$ in $\mc N$, $x_i'\geq x_i$ and $x_{-i}'\geq x_{-i}$, 
\begin{equation}\label{eq:incr_diff}
	u_i(x_i',x_{-i}')-u_i(x_i,x_{-i}')\geq u_i(x_i',x_{-i})-u_i(x_i,x_{-i})\,.
\end{equation} When $\mc A$ is an interval and the utilities are twice differentiable, this condition is satisfied if and only if $\frac{\partial^2 u_i}{\partial x_i \partial x_j}(x)\geq 0$, for every  $x$  in $\mc X$, and $i$ and $j$ in $\mc N$, $i\neq j$.} 
For all-active games, we have the following fundamental fact.
\begin{proposition}\label{pr:all_sup}
Consider a set of producers $\mathcal N$, production cost functions $C_i$ for every $i$ in $\mc N$, and affine demand function \eqref{eq:aff_dem}. 
Then:
\begin{enumerate}
	\item[(i)]   for every producer $i$ in $\mc N$, the utility function $\ua_i(x)=\ua(x_i,x_{-i})$, defined in \eqref{eq:ut_aa}, is strictly concave in $x_i$ for every action profile $x_{-i}$ in $[0,\hat p]^{\mc N\setminus\{i\}}$;
	\item[(ii)] the all-active game is super-modular; 
	\item[(iii)] the all-active game admits a unique Nash equilibrium $x^*$ in $[0,\hat p]^{\mc N}$. 
\end{enumerate}
\end{proposition}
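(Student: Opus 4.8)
The plan is to derive all three claims from two elementary second-order computations on the smooth function $\ua_i$. First set $\alpha:=1/(n+\gamma)\in(0,1)$, so that $\pa(x)=\alpha\gamma\hat p+\alpha\sum_j x_j$ and $\partial\pa/\partial x_j=\alpha$ for every $j$; note also that the extended cost $\ca_i$ of \eqref{eq:c_ext} is of class $C^2$ on $\R$ (value, first and second derivatives match at $0$) with $\ca_i''(z)=C_i''(z)\ge0$ for $z\ge0$ by convexity of $C_i$ and $\ca_i''(z)=C_i''(0)\ge0$ for $z<0$, hence $\ca_i''\ge0$ everywhere and $\ua_i\in C^2(\R^{\mc N})$. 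Differentiating $\ua_i(x)=\tfrac12\pa(x)^2-\tfrac12 x_i^2-\ca_i(\pa(x)-x_i)$ twice in $x_i$ gives
\[\frac{\partial^2\ua_i}{\partial x_i^2}(x)=\alpha^2-1-(1-\alpha)^2\,\ca_i''(\pa(x)-x_i)<0\,,\]
since $\alpha^2-1<0$ and $\ca_i''\ge0$; this proves strict concavity of $x_i\mapsto\ua_i(x_i,x_{-i})$, i.e.\ (i). For $j\neq i$ the mixed derivative is
\[\frac{\partial^2\ua_i}{\partial x_i\partial x_j}(x)=\alpha^2+\alpha(1-\alpha)\,\ca_i''(\pa(x)-x_i)\ \ge\ \alpha^2>0\,,\]
so $\ua_i$ has the increasing-difference property; together with continuity of the $\ua_i$ and the fact that each action set $[0,\hat p]$ is a compact interval, this yields (ii).

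For (iii), existence is immediate from (i): the $\ua_i$ are continuous on the compact convex set $[0,\hat p]^{\mc N}$ and strictly concave in $x_i$, so a Nash equilibrium exists (alternatively, (ii) and Topkis's theorem give a smallest and a largest equilibrium). Strict concavity also makes each best response $\mathcal B_i(x_{-i})$ a singleton, so the best-response map $B\colon[0,\hat p]^{\mc N}\to[0,\hat p]^{\mc N}$ is a well-defined function whose fixed points are exactly the Nash equilibria, and it suffices to show $B$ is a contraction in the $\ell^\infty$ norm. Since $\ua_i(\cdot,x_{-i})$ is strictly concave on all of $\R$ with own-derivative tending to $+\infty$ (resp.\ $-\infty$) as $x_i\to-\infty$ (resp.\ $+\infty$), it has a unique unconstrained maximizer $\hat x_i(x_{-i})$ determined by $\partial_{x_i}\ua_i=0$, and $\partial^2_{x_i}\ua_i<0$ lets the Implicit Function Theorem give that $\hat x_i$ is $C^1$ in $x_{-i}$ with, for $j\neq i$,
\[\frac{\partial\hat x_i}{\partial x_j}=\frac{\alpha^2+\alpha(1-\alpha)t_i}{(1-\alpha^2)+(1-\alpha)^2 t_i}\,,\qquad t_i:=\ca_i''(\pa-x_i)\ge0\,.\]
Summing over the $n-1$ indices $j\neq i$ and using $n\alpha<1$ (equivalent to $\gamma>0$) together with $n\alpha^2<1$ (equivalent to $(n+\gamma)^2>n$, true since $n\ge1$ and $\gamma>0$), a short computation shows $\sum_{j\neq i}|\partial\hat x_i/\partial x_j|<1$ for every $t_i\ge0$; as this sum is continuous in $t_i$ with limit $(n-1)\alpha/(1-\alpha)<1$ as $t_i\to\infty$, its supremum over $t_i\ge0$ is some $c<1$. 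Because $B_i(x_{-i})$ is obtained from $\hat x_i(x_{-i})$ by clamping to $[0,\hat p]$ and clamping is $1$-Lipschitz, $B$ is $c$-Lipschitz in $\ell^\infty$, and Banach's fixed point theorem gives a unique fixed point, i.e.\ a unique Nash equilibrium.

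The two derivative computations and the inequality $\sum_{j\neq i}|\partial\hat x_i/\partial x_j|<1$ are routine. The step that needs genuine care is the uniformity in the contraction argument: $\ca_i''$ is not bounded by an absolute constant, so one cannot invoke a fixed contraction modulus, and one must instead note that the per-point bound, as a function of $t_i\in[0,\infty)$, stays strictly below $1$ and has limit strictly below $1$ — which is precisely where the hypothesis $\gamma>0$ is used (twice). A secondary point to state carefully is that $B$ is genuinely defined on the whole cube via the clamping, and that clamping preserves the $\ell^\infty$ contraction estimate; both are standard but should be spelled out.
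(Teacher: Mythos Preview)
Your proof is correct. Parts (i) and (ii) are exactly the paper's computations, written with the convenient shorthand $\alpha=1/(n+\gamma)$. For (iii) the paper takes a different packaging: it observes that the same second-derivative formulas give strict diagonal dominance,
\[
\frac{\partial^2\ua_i}{\partial x_i^2}+\sum_{j\ne i}\left|\frac{\partial^2\ua_i}{\partial x_i\partial x_j}\right|<0\,,
\]
and then cites the classical result of Gabay--Moulin that strict diagonal dominance of the Jacobian of the pseudo-gradient implies uniqueness of the Nash equilibrium. Your route---Implicit Function Theorem on the first-order condition, then showing the best-response map is an $\ell^\infty$-contraction, then Banach---is essentially a self-contained proof of that cited result in this particular case: the inequality $\sum_{j\ne i}|\partial\hat x_i/\partial x_j|<1$ you verify is \emph{identical}, after clearing denominators, to the diagonal-dominance inequality above. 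What your approach buys is that no external theorem needs to be invoked; what the paper's buys is brevity and a direct pointer to a standard tool that applies more broadly. Your care about the uniform bound in $t_i$ (noting that the ratio is monotone in $t_i$ with limit $(n-1)\alpha/(1-\alpha)=(n-1)/(n+\gamma-1)<1$) and about clamping preserving the Lipschitz constant is well placed; these are exactly the points where a sloppy version of the argument would break.
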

\begin{proof}
%	First, observe that, for an affine demand function as in \eqref{eq:aff_dem}, we have that  
%	$$
%		%	N-\gamma \pa=\sum_{j=1}^{n} (\pa-y_j) \quad \Leftrightarrow \quad 
%		\pa(y, y_{-i})=\frac{N+y+\sum_{j\neq i} y_j}{n+\gamma}
%		$$
%		We can  compute the derivatives of $\ua_i$ for $i$ in $\mathcal N$. %With a slight abuse of notation, we shall denote with $\pa'(y)=\frac{\partial \pa}{\partial y_i}(y)$. 		 Observe that $\pa'$ does not depend on $i$. 
%		We shall start by computing the first derivative in $y_i$, thus obtaining 
For every producer $i$ in $\mc N$, equations \eqref{eq:ut_aa}, \eqref{eq:c_ext}, and \eqref{eq:p_tilde} imply that 
$$
\ba{rcl}
\ds\frac{\partial \ua_i(x)}{\partial x_i }%&=\pa\pa'-y_i +(\pa'-1)C_i'(\pa-y_i)\\
&=&\ds\pa(x)\frac{\partial \pa(x)}{\partial x_i }-x_i -\ca_i'(\pa(x)-x_i)\frac{\partial \pa(x)}{\partial x_i } \\[10pt]
&=&\ds\frac{\pa(x)}{n+\gamma}-y_i +\left(\frac{n+\gamma-1}{n+\gamma}\right)C_i'(\pa(x)-x_i)\,,\ea
%\end{aligned}
$$
and
%where $\pa'=\pa_{y_i}'=\frac{1}{n+\pa}$ for all $i$. 
%	We shall then compute the second derivatives and obtain:
$$
%		\begin{aligned}
	\ba{rcl}
	\ds \frac{\partial^2 \ua_i(x)}{\partial x_i^2}
	&=&\ds\ds\frac{1}{n+\gamma}\frac{\partial \pa(x)}{\partial x_i }-1 +\left(\frac{n+\gamma-1}{n+\gamma}\right)C_i''(\pa(x)-x_i)\frac{\partial \pa(x)}{\partial x_i }\\[10pt]
	&=&\ds\frac{1}{(n+\gamma)^2}-1 -\left(\frac{n+\gamma-1}{n+\gamma}\right)^2C_i''(\pa(x)-x_i)<0
	%			\\[10pt]
	%			&<&0
	\,,\ea
	%		\end{aligned}
$$
where the last inequality follows from the fact that $n\ge1$, $\gamma>0$, and $C_i''(q_i)\ge0$ since the cost function $C_i$ is convex. The fact that ${\partial^2 \ua_i}/{\partial x_i ^2}<0$ readily implies point (i) of the claim. 

On the other hand, for every $i\ne j$ in $\mc N$, we have 
\be\label{extra-diag}\ba{rcl}
\ds \frac{\partial^2 \ua_i(x)}{\partial x_i \partial x_j}
&=&\ds\ds\frac{1}{n+\gamma}\frac{\partial \pa(x)}{\partial x_j} \\[10pt] &&\ds+\left(\frac{n+\gamma-1}{n+\gamma}\right)C_i''(\pa(x)-x_i)\frac{\partial \pa(x)}{\partial x_j }\\[10pt]
&=&\ds\frac{1}{(n+\gamma)^2} +\frac{(n+\gamma-1)}{(n+\gamma)^2}C_i''(\pa(x)-x_i)
\\[10pt]
&>&0
\,,\ea\ee
where the last inequality again follows from the fact that $n\ge1$, $\gamma>0$, and $C_i''(q_i)\ge0$. 
Equation \eqref{extra-diag} implies that the game is super-modular, thus proving point (ii) of the claim. 

Finally, we have that 
\be\label{diag-dom}\ba{rcl}
\small	\ds\sum_{j=1}^n\frac{\partial^2 \ua_i(x)}{\partial x_i \partial x_j}
%&=&\ds\frac{n}{(n+\gamma)^2}-1\\[10pt] %&&\ds+\frac{(n-1)(n+\gamma-1)-(n+\gamma-1)^2}{(n+\gamma)^2}C_i''(\pa(x)-x_i)\\[10pt]
&=&\ds\frac{n}{(n+\gamma)^2}-1\\[10pt] &&\ds-\frac{\gamma(n+\gamma-1)}{(n+\gamma)^2}C_i''(\pa(x)-x_i)<0
%                 \\[10pt]
%                 &<&\ds0
\,,\ea\ee
where the last inequality follows again from the fact that $n\ge1$, $\gamma>0$, and $C_i''(q_i)\ge0$.
Notice that \eqref{extra-diag} and \eqref{diag-dom} imply that 
$$\ds\frac{\partial^2 \ua_i(x)}{\partial x_i^2}%+
+\sum_{j\ne i}\left|\frac{\partial^2 \ua_i(x)}{\partial x_i \partial x_j}\right|<0\,.$$
By \cite[Theorem 4.1]{Gabay.Moulin:80}, the strict diagonal dominance above implies uniqueness of the Nash equilibrium, thus proving point (iii) of the claim. 
\end{proof}

Observe that, with respect to the activation price game (Definition \ref{def-activation-point-game}), in defining the all-active game we drop the positive part of the difference $\pa(x)-x_i$ between the market clearing price and the activation price both in the utility function \eqref{eq:ut_aa} (c.f.~\eqref{eq:u_r}) and in the corresponding the market clearing price \eqref{eq:p_tilde} (c.f.~\eqref{eq:equilibrium_r}). The following connection between the activation price game and the all-active game can be established.
\begin{proposition}\label{prop:game-comp1} 
For a set of producers $\mathcal N$, production cost functions $C_i$ for every $i$ in $\mc N$, and affine demand function \eqref{eq:aff_dem}, consider the activation price game with clearing price $\ov p(x)$ and utility functions $\ov u_i(x)$, and the all-active game with clearing price $\tilde p(x)$ and utility functions $\tilde u_i(x)$. 
%Then, for every $K>0$, the  $\aff$-PAB auction game admits a Nash equilibrium. Moreover,  the set of active producers in any Nash equilibrium $x^*$ of the $\aff$-PAB auction game  is 
Then, for every activation price profile $x$ in $\mathcal X$, the following conditions are equivalent:
\begin{enumerate}
	\item[(a)] $x_i\leq \ov p(x)$ for every $i$ in $\mc N$;
	\item[(b)] $x_i\leq \pa(x)$ for every $i$ in $\mc N$.
\end{enumerate}
Moreover, if any of the above conditions is satisfied, then \be\label{p*=pa}\ov p(x)=\pa(x)\,,\ee and \be\label{u=u*}\apu_i(x)=\ua_i(x)\,,\ee
for every producer $i$ in $\mc N$. 
\end{proposition}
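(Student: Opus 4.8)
The plan is to establish the equivalence of (a) and (b) first, and then derive the two identities \eqref{p*=pa} and \eqref{u=u*} as easy consequences. The key observation is that when all activation prices lie below the relevant clearing price, every producer is active, so the positive parts in \eqref{eq:equilibrium_r} and \eqref{eq:u_r} can be dropped, and the market-clearing equation $D(\ov p(x))=\sum_j[\ov p(x)-x_j]_+$ collapses to the linear equation $\gamma(\hat p-\ov p(x))=\sum_j(\ov p(x)-x_j)$, whose unique solution is exactly $\pa(x)$ as given by \eqref{eq:p_tilde}. Symmetrically, $\pa(x)$ is by construction the solution of $\gamma(\hat p-\pa(x))=\sum_j(\pa(x)-x_j)$ with no positive parts, and one must check that this forces $x_i\le\pa(x)$ for all $i$ exactly when $x_i\le\ov p(x)$ for all $i$.

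First I would prove (a)$\Rightarrow$(b) together with \eqref{p*=pa}: assuming $x_i\le\ov p(x)$ for all $i$, each term $[\ov p(x)-x_j]_+$ equals $\ov p(x)-x_j$, so \eqref{eq:equilibrium_r} with the affine demand \eqref{eq:aff_dem} reads $\gamma(\hat p-\ov p(x))=\sum_j(\ov p(x)-x_j)=n\ov p(x)-\sum_j x_j$, which rearranges to $\ov p(x)=(\gamma\hat p+\sum_j x_j)/(n+\gamma)=\pa(x)$; hence $x_i\le\ov p(x)=\pa(x)$ for all $i$, giving (b). For the converse (b)$\Rightarrow$(a), assume $x_i\le\pa(x)$ for all $i$; then $\sum_j[\pa(x)-x_j]_+=\sum_j(\pa(x)-x_j)=n\pa(x)-\sum_j x_j$, and by the definition \eqref{eq:p_tilde} this equals $\gamma\hat p-\gamma\pa(x)=D(\pa(x))$. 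Thus $\pa(x)$ satisfies the defining equation \eqref{eq:equilibrium_r} of the market-clearing price of the activation price game; by the uniqueness of that solution (guaranteed by the standing assumptions on $D$ and the supply functions), $\ov p(x)=\pa(x)$, and so $x_i\le\pa(x)=\ov p(x)$ for all $i$, which is (a). This simultaneously re-establishes \eqref{p*=pa} under condition (b).

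Finally, assuming either condition holds (so $\ov p(x)=\pa(x)=:p$ and $x_i\le p$ for all $i$), I would verify \eqref{u=u*} by direct substitution. In \eqref{eq:u_r} we have $[\,\ov p(x)-x_i]_+=p-x_i\ge0$, and since $p-x_i\ge0$ the extended cost $\ca_i$ in \eqref{eq:c_ext} agrees with $C_i$ at the argument $p-x_i$, i.e.\ $\ca_i(p-x_i)=C_i(p-x_i)$. Hence
\[
\apu_i(x)=p(p-x_i)-\tfrac12(p-x_i)^2-C_i(p-x_i)=\pa(x)(\pa(x)-x_i)-\tfrac12(\pa(x)-x_i)^2-\ca_i(\pa(x)-x_i)=\ua_i(x)\,,
\]
using \eqref{eq:ut_aa} and $\pa(x)=p$. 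This closes the argument.

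The only mildly delicate point — and the one I would treat as the main obstacle — is invoking uniqueness of the solution of \eqref{eq:equilibrium_r} to conclude $\ov p(x)=\pa(x)$ in the direction (b)$\Rightarrow$(a): one must make sure that $\pa(x)$ indeed lies in $[0,\hat p]$ (so that it is a legitimate candidate clearing price), which follows because $\pa(x)$ is a convex combination of $\hat p$ and the values $x_j\in[0,\hat p]$, and that the supply side $\sum_j[\,\cdot-x_j]_+$ is continuous and non-decreasing while $D$ is continuous and strictly decreasing with $D(0)>0$, exactly the hypotheses ensuring a unique crossing. Everything else is elementary algebra with the affine demand.
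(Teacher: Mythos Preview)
Your proof is correct and follows essentially the same route as the paper: drop the positive parts under (a) to identify $\ov p(x)$ with $\pa(x)$, and under (b) show $\pa(x)$ solves \eqref{eq:equilibrium_r} and invoke uniqueness of the clearing price. Your additional remark that $\pa(x)\in[0,\hat p]$ because it is a convex combination of $\hat p$ and the $x_j$'s is a welcome clarification that the paper leaves implicit.
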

\begin{proof}
If condition (a) is satisfied, then \eqref{eq:equilibrium_r} and \eqref{eq:aff_dem} imply that
%$$D(p^*(x))=\sum\limits_{i=1}^n(p^*(x)-x_i)\,.% \quad \Leftrightarrow \quad y(x)=\frac{N+\sum_{j=1}^{n} x_j}{n+\gamma}
$$\sum_{i=1}^n(\ov p(x)-x_i)=\sum_{i=1}^n[\ov p(x)-x_i]_+=D(\ov p(x))=\gamma (\hat p-\ov p(x))\,,$$
so that condition (a) and \eqref{eq:p_tilde} yield
$$x_i\le \ov p(x)=\frac{\gamma\hat p+\sum_{j=1}^n x_j}{n+\gamma}=\pa(x)\,,\;\forall i\in\mc N$$  
This proves that condition (a) implies that both \eqref{p*=pa} and condition (b) are satisfied.  

Conversely, if condition (b) is satisfied, then \eqref{eq:equilibrium_r} and \eqref{eq:aff_dem} imply that
$$\sum_{i=1}^n[\pa(x)-x_i]_+=\sum_{i=1}^n(\pa(x)-x_i)=\gamma(\hat p-\pa(x))=D(\pa(x))\,,$$
so that $\pa(x)$ satisfies equation \eqref{eq:equilibrium_r} for the market clearing price of the activation price game. Since equation \eqref{eq:equilibrium_r} admits $\ov p(x)$ as its unique solution, this implies that \eqref{p*=pa} is satisfied, 
so that condition (b) and \eqref{eq:p_tilde} yield
$x_i\le\pa(x)=\ov p(x)\,,$
for every producer $i$ in $\mc N$, thus proving that condition (a) is satisfied.  

Finally, observe that, for every $x$ such that conditions (a) and (b) are verified, we have that  $\pa(x)=\ov p(x)$, so that \eqref{eq:u_r} and \eqref{eq:ut_aa} imply that 
$$\ba{rcl}\apu_i(x)
&=&\ov p[\ov p(x)-x_i]_+-\frac{1}{2}[\ov p(x)-x_i]_+^2-C_i\left([\ov p(x)-x_i]_+\right)\\[7pt]
&=&\pa(x)(\pa(x)-x_i)-\frac12{(\pa(x)-x_i)^2}-\ca_i(\pa(x)-x_i)\\[7pt]
&=&\ua_i(x)\,,
\ea$$
for every producer $i$ in $\mc N$, thus proving \eqref{u=u*}. 
\end{proof}

We now want to link our result on all-active games to the activation price games. The following result provides a sufficient condition for the Nash equilibria of the all-active game and the activation price game to coincide.
%		We denote the unique Nash equilibrium of $\mathcal G_a$ with the symbol $y^{*(q)}$ and we notice that it can be characterized as the unique solution of the set of equations
%		\begin{equation}\label{eq:part_du}
%			\frac{\partial \ua_i}{\partial y_i}=\left(\pa(y)-C_i'(\pa(y)-y_i)\right)\left(\frac{\partial \pa}{\partial y_i}-1\right)+\pa(y)-y_i=0
%		\end{equation}
%The concavity of $u_i$ further leads to the following result, that explicits the relation between the Nash equilibria of the $\aff$-PAB auction game and the all-active game. {\color{red} POTREBBE ESSERE INTERESSANTE AGGIUNGERE UN RISULTATO IN CUI DICIAMO QUANDO/SE GLI EQUILIBRI COINCIDONO ANCHE SE LA DOMANDA NON E' AFFINE}

\begin{proposition}\label{prop:game-comp2} 
Consider a set of producers $\mathcal N$, production cost functions $C_i$ for every $i$ in $\mc N$, and affine demand function \eqref{eq:aff_dem}. 
Then, every Nash equilibrium $x^*$ of the activation price game such that $\act(x^*)=\mc N$
%\be\label{x*<\ov p}x^*_i< \ov p(x^*)\,,\qquad \forall i\in\mc N\,,\ee 
is also a Nash equilibrium of the all-active game.
\end{proposition}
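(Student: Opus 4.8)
The plan is to exploit that the utility functions of the activation price game and of the all-active game coincide on a neighborhood of any all-active profile, and then to promote the local optimality that $x^*$ enjoys as a Nash equilibrium of the activation price game to the global optimality required in the all-active game, using the concavity from Proposition~\ref{pr:all_sup}(i).

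First I would observe that, since $\act(x^*)=\mc N$, every activation price satisfies $x^*_j<\ov p(x^*)$, so condition~(a) of Proposition~\ref{prop:game-comp1} holds at $x^*$; hence $\ov p(x^*)=\pa(x^*)$ and $\apu_i(x^*)=\ua_i(x^*)$ for every producer $i$. Fixing $i$, I would then use that $\pa$ depends affinely (hence continuously) on $x_i$ and that $\pa(x^*)=\ov p(x^*)$ is strictly larger than each $x^*_j$: therefore there is $\delta>0$ such that, along the segment $x=(x_i,x^*_{-i})$ with $|x_i-x^*_i|<\delta$ and $x_i\in[0,\hat p]$, one still has $x_j\le\pa(x)$ for every $j$ in $\mc N$, the deviating producer $i$ included. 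On this $\delta$-neighborhood condition~(b) of Proposition~\ref{prop:game-comp1} holds, so $\apu_i(x_i,x^*_{-i})=\ua_i(x_i,x^*_{-i})$.

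Next I would invoke the Nash property of $x^*$ in the activation price game, namely $\apu_i(x^*_i,x^*_{-i})\ge\apu_i(x_i,x^*_{-i})$ for all $x_i$ in $[0,\hat p]$, and restrict it to $x_i$ near $x^*_i$. Together with the identities from the previous paragraph this yields $\ua_i(x^*_i,x^*_{-i})\ge\ua_i(x_i,x^*_{-i})$ for all $x_i$ with $|x_i-x^*_i|<\delta$, i.e., $x^*_i$ is a local maximizer of $x_i\mapsto\ua_i(x_i,x^*_{-i})$ on $[0,\hat p]$. Since this map is concave on $[0,\hat p]$ by Proposition~\ref{pr:all_sup}(i), a local maximizer is a global maximizer, so $x^*_i$ is a best response in the all-active game; as $i$ was arbitrary, $x^*$ is a Nash equilibrium of the all-active game.

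The only delicate point is the choice of $\delta$ above: I must make sure that a unilateral perturbation of $x_i$ keeps \emph{every} producer---including the deviating one---on the active side $x_j\le\pa(x)$, so that Proposition~\ref{prop:game-comp1} can be applied on the full neighborhood. This is precisely where the \emph{strict} inequalities $x^*_j<\ov p(x^*)$ guaranteed by $\act(x^*)=\mc N$ (together with the affinity of $\pa$) are needed; the remaining steps are immediate consequences of Propositions~\ref{prop:game-comp1} and~\ref{pr:all_sup}.
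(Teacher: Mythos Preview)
Your proposal is correct and follows essentially the same approach as the paper: establish that the two games' utilities agree on a neighborhood of $x^*$ via Proposition~\ref{prop:game-comp1}, deduce that $x^*_i$ is a local maximizer of $\ua_i(\cdot,x^*_{-i})$, and then upgrade to a global maximizer using the (strict) concavity from Proposition~\ref{pr:all_sup}(i). The only cosmetic difference is that you verify condition~(b) of Proposition~\ref{prop:game-comp1} by continuity of $\pa$, whereas the paper verifies condition~(a) by continuity of $\ov p$; since the two conditions are equivalent, the arguments are interchangeable.
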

\begin{proof}
Since the clearing price $\ov p(x)$ of the activation price game is a continuous function of the activation price profile $x$, we have that for activation profiles $x$ sufficiently close to the Nash equilibrium $x^*$, condition (a) of Proposition \ref{prop:game-comp1} is satisfied, so that \eqref{u=u*} holds true. In particular, for every producer $i$ in $\mc N$,  there exists $\eps>0$ such that 
%and $y_i=x_i$ sufficiently close to $y^*_i=x^*_i$
$$ \ua_i(x_i, x^*_{-i})= u(x_i, x^*_{-i})\leq u_i(x^*_i, x^*_{-i})=\ua_i(x^*_i, x^*_{-i})\,,$$
for every $x_i$ in $(x_i^*-\epsilon, x_i^*+\epsilon)$. 
This implies that $x^*_i$ is a local maximum of  the function $x_i\mapsto \ua_i(x_i, x^*_{-i})$. By Proposition \ref{pr:all_sup}, the function $x_i\mapsto \ua_i(x_i, x^*_{-i})$ is strictly concave, so that $x^*_i$ is indeed the global maximum point.  This proves that, in the all active game, $x^*_i$ is a best response to $x_{-i}^*$ for every producer $i$ in $\mc N$,  i.e., $x^*$ is a Nash equilibrium of the all-active game.
\end{proof}

We can now state the main result of this section.
\begin{theorem}\label{th:eq_cost} 
Consider a set of producers $\mathcal N$, production cost functions $C_i$ for every $i$ in $\mc N$, and affine demand function \eqref{eq:aff_dem}. 
If the marginal costs in $0$ are homogeneous, i.e., if \eqref{eq:homogeneous-marginal-copsts} holds true, then the activation game admits a unique Nash equilibrium $x^*$. Moreover, the activation prices at equilibrium satisfy
\begin{equation}\label{eq:part_du}
	\pa(x^*)-(n+\gamma)x^*_i +\left(n+\gamma-1\right)C_i'(\pa(x^*)-x^*_i)=0%\,,\qquad \forall i\in\mc N\,,
\end{equation}
%\begin{equation}\label{eq:part_du}
%	\pa(x^*)-(n+\gamma)x^*_i +\left(n+\gamma-1\right)C_i'(\pa(x^*)-x^*_i)=0\,,\qquad \forall i\in\mc N\,,
%\end{equation}
for $i$ in $\mc N$, where $\tilde p$ is given by \eqref{eq:p_tilde}.
%		\begin{equation}\label{eq:part_du}
	%		\left(\frac{n+\gamma+1}{n+\gamma}\right)x_i -C_i'(\frac{N+\sum_{j\neq i}x_j}{n+\gamma}-\left(\frac{n+\gamma-1}{n+\gamma}\right)x_i)=\frac{N+\sum_{j\neq i} x_j}{n+\gamma-1}
	%	\end{equation}

%=y^{*}$, where $y^*$ is the unique Nash equilibrium of the all-active game. % is the unique Nash of the $\aff$-PAB auction game and $\tcr{\bar\act(x^*)}=\mathcal N$.
\end{theorem}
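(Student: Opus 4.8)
The plan is to reduce the uniqueness question for the activation price game, under homogeneous marginal costs at zero, to the already-established uniqueness of the Nash equilibrium of the all-active game (Proposition~\ref{pr:all_sup}(iii)), and then to read off \eqref{eq:part_du} as the first-order stationarity condition of that game. Write $c:=C_i'(0)$ for the common value in \eqref{eq:homogeneous-marginal-copsts}. I would first dispose of the degenerate regime $c\ge\hat p$: there Theorem~\ref{th:existence}(ii) forces $\act(x^*)=\emptyset$ for any Nash equilibrium, hence $D(\ov p(x^*))=\sum_i[\ov p(x^*)-x_i^*]_+=0$, so $\ov p(x^*)=\hat p$ and $x_i^*=\hat p$ for all $i$, which is the unique equilibrium; for $c=\hat p$ one checks directly that $x^*=(\hat p,\dots,\hat p)$ satisfies \eqref{eq:part_du}, since there $\pa(x^*)=\hat p=x_i^*$ and $n+\gamma-1>0$. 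From now on I assume the relevant case $c<\hat p$, where Theorem~\ref{th:existence}(iv) guarantees that \emph{every} Nash equilibrium $x^*$ of the activation price game is all-active, i.e.\ $\act(x^*)=\mc N$, equivalently $x_i^*<\ov p(x^*)$ for every $i$.

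The reduction then runs as follows. A Nash equilibrium $x^*$ of the activation price game exists by Theorem~\ref{th:existence}(i); since $\act(x^*)=\mc N$, Proposition~\ref{prop:game-comp2} shows that $x^*$ is also a Nash equilibrium of the all-active game associated with the same producers, cost functions, and affine demand. As the all-active game has exactly one Nash equilibrium (Proposition~\ref{pr:all_sup}(iii)), the activation price game has at most one Nash equilibrium, and combined with existence this proves uniqueness of $x^*$.

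It remains to extract \eqref{eq:part_du}. Since $\act(x^*)=\mc N$, condition (a) of Proposition~\ref{prop:game-comp1} holds, so $\ov p(x^*)=\pa(x^*)$; moreover $\pa(x^*)=(\gamma\hat p+\sum_j x_j^*)/(n+\gamma)\le\hat p$ because $x_j^*\le\hat p$ for all $j$, whence $x_i^*<\pa(x^*)\le\hat p$, so $x_i^*$ is not the right endpoint of $[0,\hat p]$. Nor is it the left endpoint: from the expression for $\partial\ua_i/\partial x_i$ computed in the proof of Proposition~\ref{pr:all_sup}, its value at $x_i=0$ equals $\tfrac1{n+\gamma}\pa(0,x^*_{-i})+\tfrac{n+\gamma-1}{n+\gamma}C_i'\big(\pa(0,x^*_{-i})\big)>0$, using $\pa(0,x^*_{-i})\ge\gamma\hat p/(n+\gamma)>0$, $C_i'\ge0$ and $n+\gamma-1>0$; hence the strictly concave map $x_i\mapsto\ua_i(x_i,x^*_{-i})$ is increasing at $0$ and its maximizer $x_i^*$ is positive. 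Thus $x_i^*\in(0,\hat p)$ is an interior maximizer of the differentiable strictly concave function $\ua_i(\cdot,x^*_{-i})$, so $\partial\ua_i/\partial x_i(x^*)=0$; multiplying by $n+\gamma$ and using $\ca_i'=C_i'$ on $[0,\infty)$ together with $\pa(x^*)-x_i^*>0$ gives exactly \eqref{eq:part_du}.

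The hard part is this last step: upgrading ``$x_i^*$ maximizes $\ua_i(\cdot,x^*_{-i})$ on the compact interval $[0,\hat p]$'' to the unconstrained stationarity equation \eqref{eq:part_du} hinges on the interiority of $x_i^*$, which is precisely where the all-active property $\act(x^*)=\mc N$ (ruling out $x_i^*=\hat p$) and the strict positivity of the clearing price $\pa$ (ruling out $x_i^*=0$) enter; everything else is assembling Propositions~\ref{pr:all_sup}, \ref{prop:game-comp1}, \ref{prop:game-comp2} and Theorem~\ref{th:existence}.
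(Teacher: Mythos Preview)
Your proof is correct and follows the same strategy as the paper's: invoke Theorem~\ref{th:existence}(iv) and Proposition~\ref{prop:game-comp2} to identify every activation-game Nash equilibrium with the unique all-active-game equilibrium (Proposition~\ref{pr:all_sup}(iii)), then read off \eqref{eq:part_du} as the first-order stationarity condition of $\ua_i$. You are in fact more careful than the paper in two respects---treating the degenerate regime $c\ge\hat p$ separately (the paper's proof silently applies Theorem~\ref{th:existence}(iv), which requires $c<\hat p$) and verifying interiority of $x_i^*$ in $[0,\hat p]$ before writing $\partial\ua_i/\partial x_i(x^*)=0$---but the core argument is identical.
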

\begin{proof}
%	We shall start by proving that, if $x^*\in\mathcal X^*$ and $x^*_i< p^*(x^*)$ for every $i\leq n$, then $y^*=x^*$ is a Nash equilibrium of the all-active game.
By Theorem \ref{th:existence}(iv), every Nash equilibria $x^*$ of the activation price game has active set $\act(x^*)=\mathcal N$, so that Proposition \ref{prop:game-comp2} implies that it must also be a Nash equilibrium for the all-active game. That the activation price game admits a unique Nash equilibrium then follows from uniqueness of Nash equilibria of the all-active game (Proposition \ref{pr:all_sup}). Equation \eqref{eq:part_du} then follows from the first order conditions $\frac{\partial \tilde u_i}{\partial x_i}(x^*)=0$ on the utility functions \eqref{eq:ut_aa} of the all-active game.
\end{proof}

Theorem \ref{th:eq_cost} establishes uniqueness of Nash equilibria for the activation price game when the marginal costs are homogeneous in zero. This assumption guarantees that at the Nash equilibrium the players are either all active or all inactive. Thanks to Proposition \ref{prop:K=1} and Proposition \ref{prop:restr_game}(ii), Theorem \ref{th:eq_cost} implies the following result on the uniqueness of the Nash equilibrium of the $\lip$-PAB auction game up to the clearing price. 

\begin{corollary}\label{coro:uniqueness}
Consider a set of producers $\mathcal N$, production cost functions $C_i$ for every $i$ in $\mc N$, and affine demand function \eqref{eq:aff_dem}. 
If the marginal costs in $0$ are homogeneous, i.e., if \eqref{eq:homogeneous-marginal-copsts} holds true, 
then, for every $K>0$, there exists a unique $x^*$
such that every Nash equilibrium $S^*$ of the $\lip$-PAB auction game satisfies, for $i$ in $\mc N$, \be\label{unique-eq}S^*_i(p)=K[p-x^*_i]_+\,,\qquad \forall p \in [0, p^*(S^*)]\ee%\,,\qquad\forall i\in\mc N\,.\ee 
%	Moreover, if $K=1$, the activation prices at equilibrium satisfy \eqref{eq:part_du} where $\tilde p$ is given by \eqref{eq:p_tilde}.
\end{corollary}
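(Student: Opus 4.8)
The plan is to chain together the uniqueness result for the activation price game (Theorem~\ref{th:eq_cost}) with the two reductions already established: the isomorphism between $\lip$-PAB and $\nexp$-PAB auction games (Proposition~\ref{prop:K=1}) and the correspondence between Nash equilibria of $\nexp$-PAB auction games and those of activation price games (Proposition~\ref{prop:restr_game}). Nothing new needs to be proved; the work is in threading the two transformations correctly.

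First, fix $K>0$ and pass to the $\nexp$-PAB auction game with transformed data $C_i^1(q)=K^{-1}C_i(Kq)$ and $D^1(p)=K^{-1}D(p)$ as in \eqref{new-game}. Here one must check that the hypotheses of Theorem~\ref{th:eq_cost} survive the rescaling: the transformed demand $D^1(p)=(\gamma/K)(\hat p-p)$ is still affine in the sense of \eqref{eq:aff_dem}, with the same $\hat p$ and slope $\gamma/K>0$, and $(C_i^1)'(0)=C_i'(0)$ for every $i$, so the homogeneity condition \eqref{eq:homogeneous-marginal-copsts} continues to hold for the transformed marginal costs. Consequently, the activation price game associated with the transformed data admits a \emph{unique} Nash equilibrium, which I denote $x^*$. (Note that $x^*$ depends on $K$ through $D^1$ and the $C_i^1$, but for each fixed $K$ it is uniquely determined by the data, which is all the statement requires.)

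Next, apply Proposition~\ref{prop:restr_game}(ii) to the transformed $\nexp$-PAB game: for any of its Nash equilibria $S$ there is a Nash equilibrium $\tilde x$ of the corresponding activation price game with $p^{*1}(S)=\ov p(\tilde x)$ and $S_i(p)=[p-\tilde x_i]_+$ for every $p\in[0,p^{*1}(S)]$ and $i\in\mc N$; by the uniqueness just established, $\tilde x=x^*$. Finally, transport back to the $\lip$-PAB game: if $S^*$ is a Nash equilibrium of the $\lip$-PAB auction game, then Proposition~\ref{prop:K=1}(iv) makes $K^{-1}S^*$ a Nash equilibrium of the transformed $\nexp$-PAB game, Proposition~\ref{prop:K=1}(i) gives $p^*(S^*)=p^{*1}(K^{-1}S^*)$, and the previous sentence yields $(K^{-1}S^*)_i(p)=[p-x^*_i]_+$ on $[0,p^*(S^*)]$; multiplying through by $K$ gives \eqref{unique-eq}. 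The proof is essentially bookkeeping, and the only step demanding genuine attention is the verification in the second paragraph that the rescaling preserves both affineness of the demand and homogeneity of the marginal costs at zero — that is precisely what keeps Theorem~\ref{th:eq_cost} available for the transformed game.
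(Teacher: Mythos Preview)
Your proof is correct and follows exactly the same route as the paper, which simply invokes Propositions~\ref{prop:K=1} and~\ref{prop:restr_game}(ii) together with Theorem~\ref{th:eq_cost}. You have merely spelled out the bookkeeping in more detail, including the useful verification that the rescaling preserves both the affineness of the demand and the homogeneity of $(C_i^1)'(0)=C_i'(0)$, which the paper leaves implicit.
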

%\begin{corollary}\label{coro:uniqueness}
%\color{red}
%Consider a set of producers $\mathcal N$, production cost functions $C_i$ for every $i$ in $\mc N$, and affine demand function \eqref{eq:aff_dem}. If the marginal costs in $0$ are homogeneous, i.e., if \eqref{eq:homogeneous-marginal-copsts} holds true,  then, for every $K>0$, there exists a unique $x^*$ such that every Nash equilibrium $S^*$ of the $\lip$-PAB auction game satisfies \be\label{unique-eq}S^*_i(p)=K[p-x^*_i]_+\,,\qquad \forall p \in [0, p^*(S^*)]\,,\qquad\forall i\in\mc N\,.\ee Moreover, the activation prices at equilibrium satisfy \begin{equation}\label{eq:part_duK}
%	\pa(x^*)-(n+\gamma/K)x^*_i +\left(n+\gamma/K-1\right)C_i'(\pa(x^*)-x^*_i)=0\,,\qquad \forall i\in\mc N\,,
%\end{equation} where $\tilde p$ is given by\begin{equation}\label{eq:p_tildeK}
%	\pa(x)=\frac{\gamma\hat p/K+\sum_{j=1}^n x_j}{n+\gamma/K}\,.
%\end{equation}
%\end{corollary}
\begin{proof}
	The claim follows from Propositions \ref{prop:K=1} and \ref{prop:restr_game}(ii), and Theorem \ref{th:eq_cost}. %We remark that we substituted $\gamma$ with $\gamma/K$ since $D^{(K)}(p)=KD(p)=\gamma(\hat{p}-p)$, leads to $D(p)=\gamma/K(\hat{p}-p)$. 
\end{proof}

In the following, we will present some examples with some intrinsic multiplicity phenomena that can appear when this condition is not met.
In particular, the following example shows that, if there are agents that sell a zero quantity, then there can exist multiple Nash equilibria of the activation price game (hence, for the $\lip$-PAB auction game), though equivalent from the point of view of utilities. %We provide an example illustrating such possibility.

\begin{figure}
	\centering
	\includegraphics[width=0.4\textwidth]{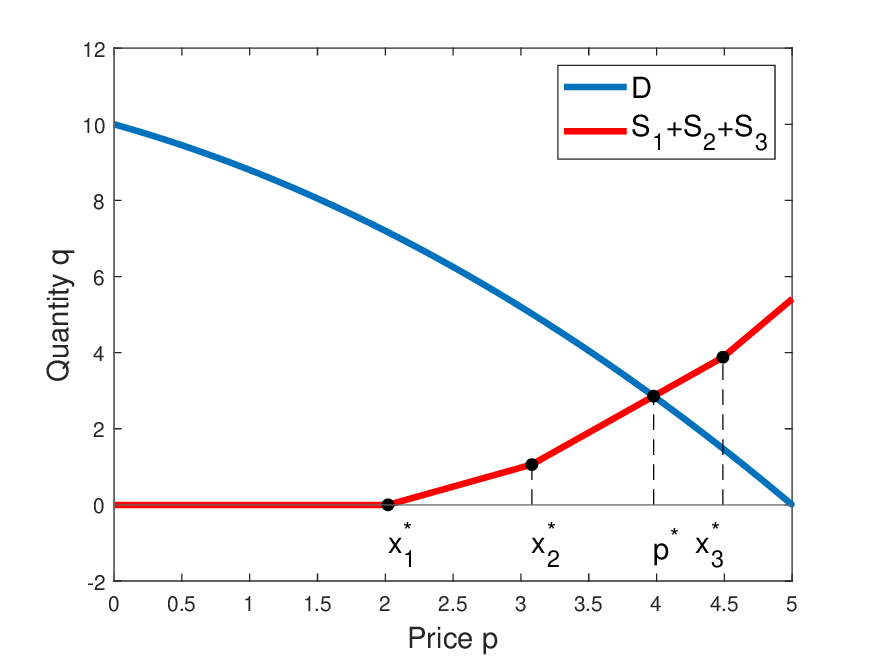}
	\includegraphics[width=0.4\textwidth]{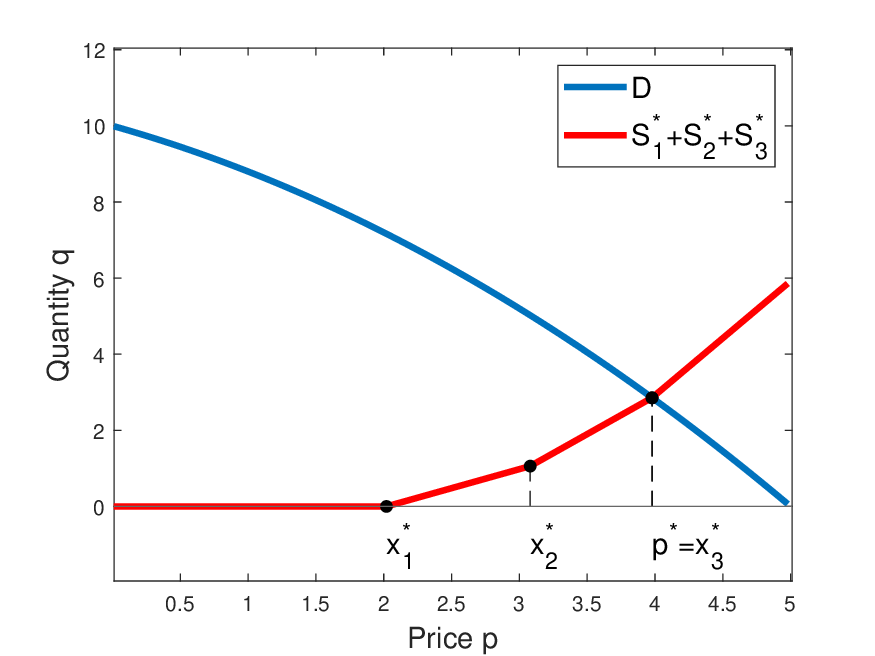}
	\caption{Two Nash equilibria of both the activation price  game and the $\lip$-PAB auction game (see Example \ref{ex:2_nonlin})}
	\label{fig:ex_nonlin2}
\end{figure}
\begin{example}\label{ex:2_nonlin}
	Let us consider the same setting as in Example \ref{ex:1_nonlin}, except for the production cost of producer $i=3$ that is now $C_3(q)=4q+\frac{1}{4}q^2$. In this case, we obtain that the vector of activation prices $x^*=[2.08,3.17, 4.5]$ corresponds to a Nash equilibrium of the activation price game with market-clearing price $p^*(x^*)=4$ (see Figure \ref{fig:ex_nonlin2}). Since $x^*_3>p^*(x^*)$, we have that, at this Nash equilibrium, agent $3$ sells a zero quantity and has no influence on the market-clearing price. %Thanks to Proposition \ref{pr:restr_space}, 
	Notice that all vectors $y=[x_1^*, x_2^*, y_3]$ where $p^*(x^*)\leq y_3 \leq x^*_3$ are Nash equilibria. %Actually, it can easily be shown that the $\aff$-PAB auction game admits an infinite number of Nash equilibria (all leading to same market-clearing price and same utilities for all the players). 
	%This observation will be exploited in Section \ref{ss:cons_ss}.
\end{example}
%Recall that, if $x\in \mathcal X \setminus \mathcal X_b$ is a Nash equilibrium, then  $y$ as in \eqref{eq:xtilde} is a Nash equilibrium. We remark that the opposite does not hold, i.e., there can be a configuration $x\in \mathcal X^b\setminus \mathcal X_o$ which is a Nash equilibrium but there are no corresponding equilibria outside $\mathcal X_b$.
%Proposition \ref{pr:restr_space} and Proposition \ref{prop:restr_game} motivate the interest for the study of 
On the other hand, we could wonder whether multiple Nash equilibria that are not equivalent from the point of view of utilities do exist. In the following example, we shall observe that even in very simple scenarios, there can be a multiplicity of Nash equilibria that are not equivalent from the point of view of utilities. 

\begin{example}
	%{\color{red} ANCORA DA RIVEDERE PER ENFATIZZARE LE OSSERVAZIONI FATTE DA FABIO} 
	Let $D(p)=1-p$ and $\mathcal N=\{1,2\}$ with $C_1(q)=\frac{1}{2}q^2$ and $C_2(q)=c_2q+\frac{1}{2}q^2$. 
	%Observe that $C_1'(0)=0$. Therefore, according to Proposition \ref{prop:quasi-concavity} $(iii)$, firm $1$ has always an incentive in selling a non-zero quantity. Let us fix $x_2$. 
	Then, the utility of firm $1$ in the activation price game is given by
	$$
	\begin{aligned}
		\bar u_1(x_1, x_2)%&=
		%			\begin{cases}	%\frac{1}{2}(p^*(x))^2-\frac{1}{2}x_1^2-\frac{1}{2}\left(p^*(x)-x_1\right)^2\,\, &\text{if }p^*(x)\geq x_1\\		
			%0&\text{otherwise}
			%				\end{cases}	\\
		&=[p^*(x)-x_1]_+x_1\\&=
		\begin{cases}
			\frac{1}{2}\left(1-x_1\right)x_1 \,\, &\text{if }0\leq x_1 \leq 2x_2-1%\leq 1%\, \land\, x_1\leq 1  
			\\
			\frac{1}{3}(1+x_2-2x_1)x_1 \,\,&\text{if }2x_2-1<x_1 \leq (x_2+1)/2\\%&\text{if }x_1> 2x_2-1 \land \, x_1\leq (x_2+1)/2\\
			0 &\text{if }(x_2+1)/2< x_1 \leq 1\,,%&\text{otherwise.}
		\end{cases}
	\end{aligned}
	$$
	which leads to the best response
	\begin{equation}\label{eq:br1_ex}
		\mathcal B_1(x_2)=\begin{cases}
			%		(1+x_2)/4 \quad &\text{if }x_2\leq \frac{1}{2}
			(1+x_2)/4 \quad &\text{if }x_2< \frac{5}{7}\\
			2x_2-1 \quad &\text{if }\frac{5}{7}\leq x_2 < \frac{3}{4}\\
			\frac{1}{2} &\text{if }x_2 \geq \frac{3}{4}
		\end{cases}
	\end{equation}
	%Let us now study the best response of firm $2$. Let us then fix $x_1$. 
	Similarly, %the utility of firm $2$ is given by 
	%$$
	%\begin{aligned}
	%\bar u_2(x_1, x_2)&=[p^*(x)-x_2]_+(x_2-c_2)\\&=
	%\begin{cases}
	%	\frac{1}{2}\left(1-x_2\right)(x_2-c_2) \,\, &\text{if }0\leq x_2 \leq 2x_1-1%\leq 1%\, \land\, x_1\leq 1  
	%	\\
	%	\frac{1}{3}(1+x_1-2x_2)(x_2-c_2) \,\,&\text{if }2x_1-1<x_2 \leq (x_1+1)/2\\%&\text{if }x_1> 2x_2-1 \land \, x_1\leq (x_2+1)/2\\
	%	0 &\text{if }(x_1+1)/2< x_2 \leq 1\,,%&\text{otherwise.}
	%\end{cases}
	%\end{aligned}
	%$$
	%which leads to the best response
	we can derive that the best response of firm $2$ is given by
	\begin{equation}\label{eq:br2_ex}
		\mathcal B_2(x_1)=\begin{cases}
			\left[\frac{x_1+1}{2},1\right] &\text{if }x_1< 2c_2-1\\
			\frac{1+x_1+2c_2}{4} \quad &\text{if }2c_2-1\leq x_2< \frac{5+2c_2}{7}\\
			2x_1-1 \quad &\text{if }\frac{5+2c_2}{7}\leq x_2 < \frac{3+c_2}{4}\\
			\frac{1+c_2}{2}\quad &\text{if }x_1 \geq \frac{3+c_2}{4}
			%		(1+x_2)/4 \quad &\text{if }x_2\leq \frac{1}{2}
		\end{cases}
	\end{equation}
	
	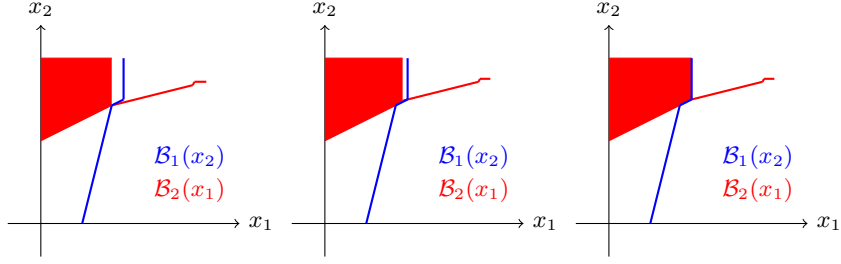
\begin{figure}
		\centering
		\begin{tikzpicture}[scale=2.2]
			\draw[->] (-0.2, 0) -- (1.2, 0) node[right] {\small $x_1$};
			\draw[->] (0, -0.2) -- (0, 1.2) node[above] {\small $x_2$}; 
			
			\draw[name path=A,domain=0:3/7, variable=\x, color=red] plot ({\x}, {(\x+1)/2});
			\draw[name path=B,domain=0:3/7,variable=\x, color=red] plot ({\x}, {1});
			\tikzfillbetween[of=A and B]{red}%, opacity=0.05};
		\draw[domain=3/7:45/49,thick, variable=\x, red] plot ({\x},{(17+7*\x)/28});
		\draw[domain=45/49:13/14,thick, variable=\x,red] plot ({\x},{2*\x-1});
		\draw[domain=13/14:1, thick,variable=\x,red] plot ({\x},{6/7});
		
		\draw[domain=0:5/7,thick, variable=\x, blue] plot ({(1+\x)/4}, {\x});
		\draw[domain=5/7:3/4,thick, variable=\x,blue] plot ({2*\x-1}, {\x});
		\draw[domain=3/4:1, thick,variable=\x,blue] plot ({1/2}, {\x});
		
		%	\draw[domain=5/7:3/4, dashed,thick, variable=\x] plot ({2*\x-1}, {\x});
		%	\draw[domain=3/4:1, dashed, variable=\x] plot ({1/2}, {\x});
		
		%			\node at (1/4,-0.2) {$1/4$};%{$S_1$};
		\node[blue] at (0.9,0.4){\small $\mathcal B_1(x_2)$};
		\node[red] at (0.9,0.2){\small $\mathcal B_2(x_1)$};
		%	\node at (8,10.2){$\tilde{S}_1+\tilde{S}_2$};
	\end{tikzpicture}
	\begin{tikzpicture}[scale=2.2]
		\draw[->] (-0.2, 0) -- (1.2, 0) node[right] {\small $x_1$};
		\draw[->] (0, -0.2) -- (0, 1.2) node[above] {\small $x_2$}; 
		%	    	\begin{axis}[	
			%				    	xmin=-0.2,xmax=1.2,
			%						ymin=-0.2,ymax=1.2,	
			%						ticks=both,
			%						grid=both,]
			%					\end{axis}
		
		\draw[name path=A,domain=0:1/2-0.03, variable=\x, color=red] plot ({\x}, {(\x+1)/2});
		\draw[name path=B,domain=0:1/2-0.03,variable=\x, color=red] plot ({\x}, {1});
		\tikzfillbetween[of=A and B]{red}%, opacity=0.05};
	\draw[domain=1/2-0.03:13/14,thick, variable=\x, red] plot ({\x},{(5+2*\x)/8});
	\draw[domain=13/14:15/16,thick, variable=\x,red] plot ({\x},{2*\x-1});
	\draw[domain=15/16:1, thick,variable=\x,red] plot ({\x},{7/8});
	
	\draw[domain=0:5/7,thick, variable=\x, blue] plot ({(1+\x)/4}, {\x});
	\draw[domain=5/7:3/4,thick, variable=\x,blue] plot ({2*\x-1}, {\x});
	\draw[domain=3/4:1, thick,variable=\x,blue] plot ({1/2}, {\x});
	
	\node[blue] at (0.9,0.4){\small $\mathcal B_1(x_2)$};
	\node[red] at (0.9,0.2){\small $\mathcal B_2(x_1)$};
	
\end{tikzpicture}
\begin{tikzpicture}[scale=2.2]
	\draw[->] (-0.2, 0) -- (1.2, 0) node[right] {\small $x_1$};
	\draw[->] (0, -0.2) -- (0, 1.2) node[above] {\small $x_2$}; 
	\draw[name path=A,domain=0:1/2, variable=\x, color=red] plot ({\x}, {(\x+1)/2});
	\draw[name path=B,domain=0:1/2,variable=\x, color=red] plot ({\x}, {1});
	\draw[domain=1/2:13/14,thick, variable=\x, red] plot ({\x},{(5+2*\x)/8});
	\draw[domain=13/14:15/16,thick, variable=\x,red] plot ({\x},{2*\x-1});
	\draw[domain=15/16:1, thick,variable=\x,red] plot ({\x},{7/8});
	\tikzfillbetween[of=A and B]{red};
	\draw[domain=0:5/7,thick, variable=\x, blue] plot ({(1+\x)/4}, {\x});
	\draw[domain=5/7:3/4,thick, variable=\x,blue] plot ({2*\x-1}, {\x});
	\draw[domain=3/4:1, thick,variable=\x,blue] plot ({1/2}, {\x});
	\node[blue] at (0.9,0.4){\small $\mathcal B_1(x_2)$};
	\node[red] at (0.9,0.2){\small $\mathcal B_2(x_1)$};
\end{tikzpicture}
\caption{Best responses and Nash equilibria for $c_2=\frac{5}{7}$, $c_2\in(\frac{5}{7},\frac{3}{4})$ and $c_2=\frac{3}{4}$, respectively.}
\label{fig:br}
\end{figure}

We then consider the following three cases.
\begin{itemize}
\item If $c_2\in [0, \frac{5}{7}]$, we obtain the unique Nash equilibrium
\begin{equation}\label{eq:eq_2ac}
	x^*=\left(\frac{2c_2}{15}+\frac{1}{3}, \frac{8c_2}{15}+\frac{1}{3}\right)
\end{equation}
which is also the unique equilibrium of the all-active game. The best responses of firm $1$ and firm $2$  for $c_1=\frac{5}{7}$ are shown on the left of Fig \ref{fig:br}.
\item If $c_2\in (\frac{5}{7}, \frac{3}{4})$, we obtain a continuum of Nash equilibria %, we obtain that the configurations in $\mathcal X_1^*$ are no longer equilibria. T
%the set of Nash equilibria is then given by
given by the set
$$
\mathcal X_1^*=\{x_1=[2x_2-1, x_2]\,,\,x_2 \in [5/7,c_2)\}
$$
The best responses of firm $1$ and firm $2$ in this case are shown on the center of Fig \ref{fig:br}. We remark that the Nash equilibria in $\mathcal X_1^*$ are not equivalent. In particular, the utility of firm $1$ is different in each Nash equilibrium, while the utility of firm $2$ is always $0$.
\item Finally, if $c_2 \in [\frac{3}{4},1]$, the set of Nash equilibria of the activation price  game is given by $\mathcal X^* =\tilde{\mathcal X}^*_1 \cup \mathcal X^*_2$ where
$$
\begin{aligned}
	\tilde{\mathcal X}_1^*&=\{x_1=[2x_2-1, x_2]\,,\,x_2 \in [5/7,3/4)\}\\
	\mathcal X_2^*&=\{x_1=\frac{1}{2}\,,\,x_2 \in [3/4,1]\}
\end{aligned}
$$
The best responses of firm $1$ and firm $2$  for $c_1=\frac{3}{4}$ are shown on the left of Fig \ref{fig:br}.	We remark that all Nash equilibria in the set $\mathcal X_2^*$ are equivalent from the point of view of utilities, while Nash equilibria in the set $\tilde{\mathcal X_1^*}$ give rise to different outcomes for firm $1$. 
\end{itemize}
Observe that the unique Nash equilibrium of the all-active game is given by $x^*$ in \eqref{eq:eq_2ac}
leading to the market-clearing price 
$\tilde p^{*}=\frac{2c_2+5}{9}$.
Observe that 
$$
x^{*}_2\leq \tilde p^{*}\quad \Leftrightarrow \quad c_2\leq \frac{5}{7}
$$
Therefore, for $c_2> \frac{5}{7}$, the Nash equilibria of the activation price game and the Nash equilibria of the all-active game do not coincide. On the other hand, if we consider the all-active game with $n=1$ agent with cost $c_1$, we obtain that the unique equilibrium is given by $x^*_1=\frac{1}{2}$ and $\tilde p^{*}=\frac{3}{4}$. For $c_2\geq \frac{3}{4}$ the unique equilibrium of the all-active game with one player gives the same outcome of the Nash equilibria in $\tilde{\mathcal X}^*_1$. Anyway, the activation price game admits also the equilibria in $\mathcal X^*_2$. Finally, for $c_2\in (\frac{5}{7}, \frac{3}{4})$, there is no correspondence from Nash equilibria of all-active games and Nash equilibria of the activation price game.
\end{example}
%{\color{red} AGGIUNGERE COMMENTO DI COLLEGAMENTO CON SEZIONE SUCCESSIVA}
%%%% c=1 %%%%%
%	\begin{tikzpicture}[scale=2.2]
%		\draw[->] (-0.2, 0) -- (1.2, 0) node[right] {\small $x_1$};
%		\draw[->] (0, -0.2) -- (0, 1.2) node[above] {\small $x_2$}; 
%		\draw[name path=A,domain=0:1, variable=\x, color=red] plot ({\x}, {(\x+1)/2});
%		\draw[name path=B,domain=0:1,variable=\x, color=red] plot ({\x}, {1});
%		\tikzfillbetween[of=A and B]{red}%, opacity=0.05};
%	\draw[domain=0:5/7,thick, variable=\x, blue] plot ({(1+\x)/4}, {\x});
%	\draw[domain=5/7:3/4,thick, variable=\x,blue] plot ({2*\x-1}, {\x});
%	\draw[domain=3/4:1, thick,variable=\x,blue] plot ({1/2}, {\x});
%	\node[blue] at (0.9,0.4){\small $\mathcal B_1(x_2)$};
%	\node[red] at (0.9,0.2){\small $\mathcal B_2(x_1)$};
%\end{tikzpicture}

\section{The affine demand and quadratic costs case}\label{sec:ld_qc}
In this section, we study Nash equilibria of the $\lip$-PAB auction game in the special case when the demand function is affine as in \eqref{eq:aff_dem}  and the production cost functions are quadratic, specifically when  there exist constants $\cost>0$ and $c_i> 0$, for every producer $i$ in $\mc N$, such that 
\be\label{eq:quad_c} C_i(q) = \cost q+c_iq^2/2\,.\ee 
Observe that \eqref{eq:quad_c} implies that the marginal production costs of the producers 
$C_i'(q)=\cost +c_i q$ are affine and homogeneous in $0$. 
Hence, Corollary \ref{coro:uniqueness} implies that, for every $K>0$, the Nash equilibrium of the $\lip$-PAB auction game is unique up to the clearing price. In the following, we shall provide an explicit expression of such Nash equilibrium.  

{
Towards this goal, consider two vectors $d$ and $a$, respectively defined by, for each $i$ in $\mc N$,
\be\label{eq:quadr_coeff2}
%	a_{i} =\frac{1+K\tilde{c}_it}{t(t+2+K\tilde{c}_it)}\,,
d_{i} =\frac{K(n-1)+\gamma }{Kc_i(n-1)+c_i\gamma+n+\gamma/K}\,,
\ee

\be\label{eq:quadr_coeff}
%	a_{i} =\frac{1+K\tilde{c}_it}{t(t+2+K\tilde{c}_it)}\,,
a_i=\frac{K}{Kn+\gamma}-\frac{d_i}{Kn+\gamma}
%a_{i} =\frac{Kc_i(n-1)+c_i\gamma+1}{(n+\gamma/K)(Kc_i(n-1)+c_i\gamma+n+\gamma/K)}\,,
\ee
Define, moreover,
\be\label{Delta-def}\Delta=\sum_{i\in\mc N}d_i\,.\ee

}

%Towards this goal,  let  
%					\be\label{eq:quadr_coeff}
%	%	a_{i} =\frac{1+K\tilde{c}_it}{t(t+2+K\tilde{c}_it)}\,,
%	a_{i} =\frac{Kc_i(n-1)+c_i\gamma+1}{(n+\gamma/K)(Kc_i(n-1)+c_i\gamma+n+\gamma/K)}\,,
%\ee
%\be\label{eq:quadr_coeff2}
%	%	a_{i} =\frac{1+K\tilde{c}_it}{t(t+2+K\tilde{c}_it)}\,,
%	d_{i} =\frac{K(n-1)+\gamma }{Kc_i(n-1)+c_i\gamma+n+\gamma/K}\,,
%\ee
%for every producer $i$ in $\mc N$, and define 
%\be\label{Delta-def}\Delta=\sum_{i\in\mc N}d_i\,.\ee

Then, we have the following result. 

\begin{theorem}\label{theo:linear}
Consider a set of producers $\mathcal N$, quadratic production cost functions as in \eqref{eq:quad_c} for every $i$ in $\mc N$, and affine demand function \eqref{eq:aff_dem}. 
Then, for every $K>0$, every Nash equilibrium $S^*$ of the $\lip$-PAB auction game satisfies \eqref{unique-eq}, with activation price {vector
\be\label{eq:ne_lin}
x^* = \ds\left(n+\frac{\gamma}{K}\right)\frac{\hat p\gamma+b\Delta}{\gamma+\Delta}a+\frac bKd\,,
\ee
%for every producer $i$ in $\mc N$, 
}and clearing price  
\be\label{eq:ep_ne_lin}
p^*=\frac{\hat p\gamma+b\Delta}{\gamma+\Delta} \,.
\ee
\end{theorem}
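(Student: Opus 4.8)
The plan is to reduce to the case $K=1$ via the rescaling of Proposition~\ref{prop:K=1}, invoke the uniqueness already available, and then solve a system of linear first-order conditions explicitly. Since $C_i'(q)=\cost+c_iq$ by \eqref{eq:quad_c}, we have $C_i'(0)=\cost$ for every $i\in\mc N$, so the marginal costs are homogeneous in zero and Corollary~\ref{coro:uniqueness} applies: for every $K>0$ there is a \emph{unique} activation price vector $x^*$ such that every Nash equilibrium $S^*$ of the $\lip$-PAB auction game satisfies \eqref{unique-eq}. Hence it only remains to compute $x^*$ and the corresponding clearing price $p^*=p^*(S^*)$. By \eqref{new-game}, the $\nexp$-PAB auction game associated with our $\lip$-PAB game has cost functions $C_i^1(q)=\cost q+c_iKq^2/2$ and demand $D^1(p)=(\gamma/K)(\hat p-p)$, and by Proposition~\ref{prop:K=1} the isomorphism $S\mapsto KS$ preserves activation prices and clearing prices. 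Consequently $x^*$ is the unique Nash equilibrium of the activation price game built from these rescaled data, and $p^*=\pa(x^*)$ with $\pa$ given by \eqref{eq:p_tilde} after replacing $\gamma$ by $\gamma/K$, i.e.\ $\pa(x)=\bigl((\gamma/K)\hat p+\sum_{j}x_j\bigr)/(n+\gamma/K)$.

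The rescaled game still has homogeneous marginal costs in zero, so Theorem~\ref{th:eq_cost} guarantees that $x^*$ satisfies, for every $i\in\mc N$, the stationarity equation \eqref{eq:part_du} with $\gamma$ replaced by $\gamma/K$ and $C_i'$ replaced by $(C_i^1)'(z)=\cost+c_iKz$, namely
\[
p^*-\Bigl(n+\tfrac{\gamma}{K}\Bigr)x_i^*+\Bigl(n+\tfrac{\gamma}{K}-1\Bigr)\bigl(\cost+c_iK(p^*-x_i^*)\bigr)=0 .
\]
This equation is linear in $x_i^*$; solving for $x_i^*$ and using the identities $K(n-1)+\gamma=K\bigl(n+\tfrac{\gamma}{K}-1\bigr)$ and $Kc_i(n-1)+c_i\gamma+n+\tfrac{\gamma}{K}=c_iK\bigl(n+\tfrac{\gamma}{K}-1\bigr)+\bigl(n+\tfrac{\gamma}{K}\bigr)$ to recognise the coefficient in \eqref{eq:quadr_coeff2}, one obtains $x_i^*=p^*-\tfrac{d_i}{K}(p^*-\cost)$, an affine function of $p^*$.

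It remains to pin down $p^*$. Substituting $x_i^*=p^*-\tfrac{d_i}{K}(p^*-\cost)$ into the market-clearing identity $\bigl(n+\tfrac{\gamma}{K}\bigr)p^*=(\gamma/K)\hat p+\sum_{j}x_j^*$, the terms $np^*$ cancel and, with $\Delta=\sum_j d_j$ as in \eqref{Delta-def}, one is left with $(\gamma/K)p^*=(\gamma/K)\hat p-(\Delta/K)(p^*-\cost)$; multiplying through by $K$ gives $(\gamma+\Delta)p^*=\gamma\hat p+\cost\Delta$, which is \eqref{eq:ep_ne_lin}. Inserting this value of $p^*$ back and rewriting $x_i^*=p^*\tfrac{K-d_i}{K}+\tfrac{\cost}{K}d_i=\bigl(n+\tfrac{\gamma}{K}\bigr)a_i\,p^*+\tfrac{\cost}{K}d_i$ with $a_i=(K-d_i)/(Kn+\gamma)$ as in \eqref{eq:quadr_coeff} yields the announced closed form \eqref{eq:ne_lin}.

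I expect the main obstacle to be organisational rather than conceptual: one must be careful to apply the first-order characterisation \eqref{eq:part_du} to the \emph{rescaled} $\nexp$-game and its associated activation price game (not to the original $\lip$-game), and the algebraic bookkeeping needed to match the explicit solution of the linear system with the compact but opaque definitions of $d_i$, $a_i$, and $\Delta$ in \eqref{eq:quadr_coeff2}--\eqref{Delta-def} is the step that deserves the most attention.
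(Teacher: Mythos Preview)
Your proof is correct and follows essentially the same route as the paper: reduce to the rescaled $\nexp$-game via Proposition~\ref{prop:K=1}, invoke Theorem~\ref{th:eq_cost} (equivalently Corollary~\ref{coro:uniqueness}) for uniqueness and the first-order conditions~\eqref{eq:part_du}, and solve the resulting linear system. The only cosmetic difference is that you solve for $x_i^*$ as an affine function of $p^*$ first and then determine $p^*$ from market clearing, whereas the paper writes $x^*=a\mathbf 1^Tx^*+\tfrac{\hat p\gamma}{K}a+\tfrac{b}{K}d$ and left-multiplies by $\mathbf 1^T$ to isolate $\mathbf 1^Tx^*$; both routes are equivalent linear algebra.
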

\begin{proof}
{%\color{blue}
The homogeneity of the marginal costs in $0$ allows to apply Theorem \ref{th:eq_cost}. This together with  Propositions \ref{prop:K=1} and \ref{prop:restr_game}(ii)  imply that every Nash equilibrium $S^*$ of the $\lip$-PAB auction game satisfies \eqref{unique-eq}, with activation price vector $x^*$ coinciding with the unique Nash equilibrium of the activation point game with set of producers $\mc N$, demand function $\ov D(p)=D(p)/K=\ov{\gamma}(\hat p-p)$, where $\ov\gamma=\gamma/K$, and production cost functions $\ov C_i(q)=C_i(K q)/K$ for every $i$ in $\mc N$. 

By substituting $\ov C_i'(q)=b+c_iKq$ and $\ov{\gamma}=\gamma/K$ into \eqref{eq:part_du}, we obtain
\be\label{new-eq}
\ba{rcl}0&=&\ds\tilde p -(n+\ov\gamma)x^*_i+(n+\ov\gamma-1)\ov C_i'(\tilde p -x^*_i) \\[7pt] 
&=&\ds\tilde p -(n+\gamma/K)x^*_i\\[7pt] &&
+(n+\gamma/K-1)(c_i K(\tilde p -x^*_i)+b)\,,\ea
\ee
for every $i$ in $\mc N$,  where 
\be\label{ptilde}\pa=\frac{\hat p\gamma/K+\sum_{j=1}^{n}x_j^*}{n+\gamma/K}\,.\ee
is the market clearing price.

% It follows from Propositions \ref{prop:K=1} and \ref{prop:restr_game}(ii), and Theorem \ref{th:eq_cost}, 
% that every Nash equilibrium $S^*$ of the $\lip$-PAB auction game satisfies \eqref{unique-eq}, with activation price vector $x^*$ coinciding with the unique Nash equilibrium of the activation point game with set of producers $\mc N$, demand function $\ov D(p)=D(p)/K=\ov{\gamma}(\hat p-p)$, where $\ov\gamma=\gamma/K$, and production cost functions $\ov C_i(q)=C_i(K q)/K$ for every $i$ in $\mc N$. 
%
%
%
% for every producer $i$ in $\mc N$ and clearing price $p^*=\tilde p=(\hat p\gamma+b\Delta)/(\gamma+\Delta)$, thus completing the proof. 
%
%
%Let $\ov{\mc G}$ be the activation point game with set of producers $\mc N$, demand function $\ov D(p)=D(p)/K=\ov{\gamma}(\hat p-p)$, where $\ov\gamma=\gamma/K$, and production cost functions $\ov C_i(q)=C_i(K q)/K$ for every $i$ in $\mc N$. 
%Since the marginal costs $\ov C_i'(0)=\cost$ are homogeneous in $0$, Theorem \ref{th:eq_cost} implies that $\ov{\mc G}$ admits a unique Nash equilibrium $x^*$. 
%
%By substituting $\ov C_i'(q)=b+c_iKq$ and $\ov{\gamma}=\gamma/K$ into \eqref{eq:part_du}, we obtain
%\be\label{new-eq}
%	\ba{rcl}0&=&\ds\tilde p -(n+\ov\gamma)x^*_i+(n+\ov\gamma-1)\ov C_i'(\tilde p -x^*_i) \\[7pt] 
%	&=&\ds\tilde p -(n+\gamma/K)x^*_i+(n+\gamma/K-1)(c_i K(\tilde p -x^*_i)+b)\,,\ea
%\ee
%for every $i$ in $\mc N$,  where 
%\be\label{ptilde}\pa=\frac{\hat p\gamma/K+\sum_{j=1}^{n}x_j^*}{n+\gamma/K}\,.\ee
%is the market clearing price.
Substituting \eqref{ptilde} into \eqref{new-eq}, multiplying by $(n+\gamma/K)$, and re-arranging terms, we get that the activation price vector satisfies the equation
\be\label{eq:foc_lin_ex}
x^*=a\1^Tx^*+\frac{\hat p\gamma}Ka+\frac bKd\,.
%
%x^*=a\sum_{j=1}^{n} x_j^*+\frac{\hat p\gamma}Ka_i+\frac bKd_i\,,\qquad \forall i\in\mc N\,,
\ee
where $a$ and $d$ are defined in \eqref{eq:quadr_coeff} and \eqref{eq:quadr_coeff2}, respectively. Left multiplying by the vector $\1^T$ we now obtain
\be\label{average}\1^Tx^*=\frac{\frac{\hat p\gamma}K\1^Ta+\frac bK\1^Td}{1-\1^Ta}\ee
Substituting expression \eqref{average} into \eqref{eq:foc_lin_ex}, recalling definition \eqref{Delta-def}, and noticing that $1-\1^Ta=\frac{\Delta+\gamma}{Kn+\gamma}>0$, we obtain \eqref{eq:ne_lin}.
Substituting expression \eqref{average} into \eqref{ptilde}, we finally get expression \eqref{eq:ep_ne_lin}.
%Substituting  \eqref{eq:ne_lin} into \eqref{ptilde} and using \eqref{1-sumai}, we then get 
%$$\pa=\frac{\hat p\gamma+K\sum_{j=1}^{n}x_j^*}{Kn+\gamma}=\frac{\hat p\gamma+b\Delta}{Kn+\gamma}+\frac{\hat p\gamma+b\Delta}{\gamma+\Delta}\sum_{j=1}^{n}a_j=\frac{\hat p\gamma+b\Delta}{\gamma+\Delta}\,.$$

}
\end{proof}

We conclude this section with some examples.
\begin{example}\label{ex:1_lin}
Consider an affine demand $D(p)=10-p$ with $N=10$ and $\gamma=1$ and let $K=1$. Let us assume that there are $n=3$ firms participating in the auction and that their costs are $C_1(q)=\frac{1}{4}q^2$, $C_2(q)=q^2$ and $C_3(q)=\frac{3}{2}q^2$, that is, $c_1=\frac{1}{2}$, $c_2=2$ and $c_3=3$. Then, according to 
Theorem \ref{theo:linear}, %the activation price game admits a unique Nash equilibrium
satisfies \eqref{unique-eq} with activation price $x^*=[ 2.19,3.37,3.71]$, resulting in the market-clearing price $p^*(x^*)=4.82$. The aggregate demand and supply at Nash equilibrium $S^*$ are shown on the left in Figure \ref{fig:ex_lin}. We remark that %the same $S^*$ is a Nash equilibrium of the $\lip$-PAB auction game. More precisely, 
all Nash equilibria of the $\lip$-PAB auction game are identical to $S^*$ up to the point $p^*$ and yield to the same market-clearing price and the same utilities for all the producers. 
\end{example}
\begin{example}\label{ex:2_lin}
Let us now consider the same setting of Example \ref{ex:1_lin} except for the production cost of firm $3$ that is now $C_3(q)=50q^2$, with  $c_3=100$. In this case, we obtain vector of activation prices $x=[2.45,3.77,5.34]$ and market-clearing price $p^*(x^*)=5.39$ (see the middle of Figure \ref{fig:ex_lin}). Notice that $x^*_i<p^*(x^*)$ for all $i$, that is all firms are selling a nonzero quantity, including firm $i=3$ that has very high costs. This follows from the assumption  that $C'_i(0)=0$ for all $i$. %This second case is shown in the middle of Figure \ref{fig:ex_lin}. 
\end{example}
\begin{example}\label{ex:3_lin}
Let us consider the same demand in Example \ref{ex:3_lin} and let $c=[\frac{1}{2},2,3]$ and $K=100$. In this case, we obtain the activation prices  %$x^*=[ 2.60, 2.6395,2.6439]$ 
$x^*_1=2.60$, $x^*_2=2.6395$ and $x^*_3=2.6439$, and the market-clearing price $p^*(x^*)= 2.65$. The aggregate supply at the Nash equilibrium $S^*$ of the $\lip$-PAB auction game is shown on the right of Figure \ref{fig:ex_lin}. Observe that, as $K$ grows large, the activation prices at Nash equilibrium are very close to the market-clearing price $p^*$. Also, the market-clearing price resulting when $K=100$ is lower than the market-clearing price resulting when $K=1$. These observations will be addressed in detail in the following section.
\end{example}
\begin{figure}
\centering
\includegraphics[width=0.32\textwidth]{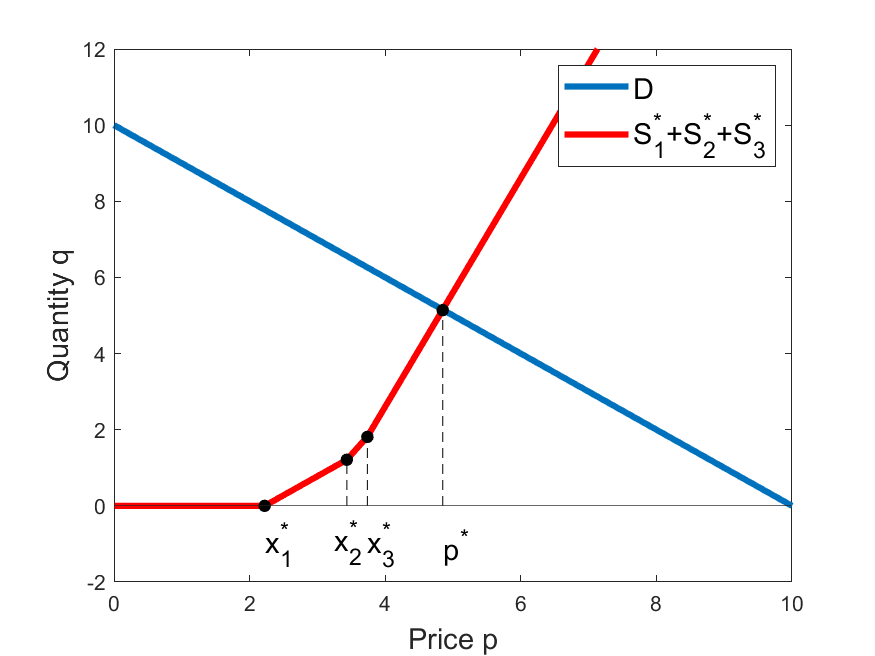}
\includegraphics[width=0.32\textwidth]{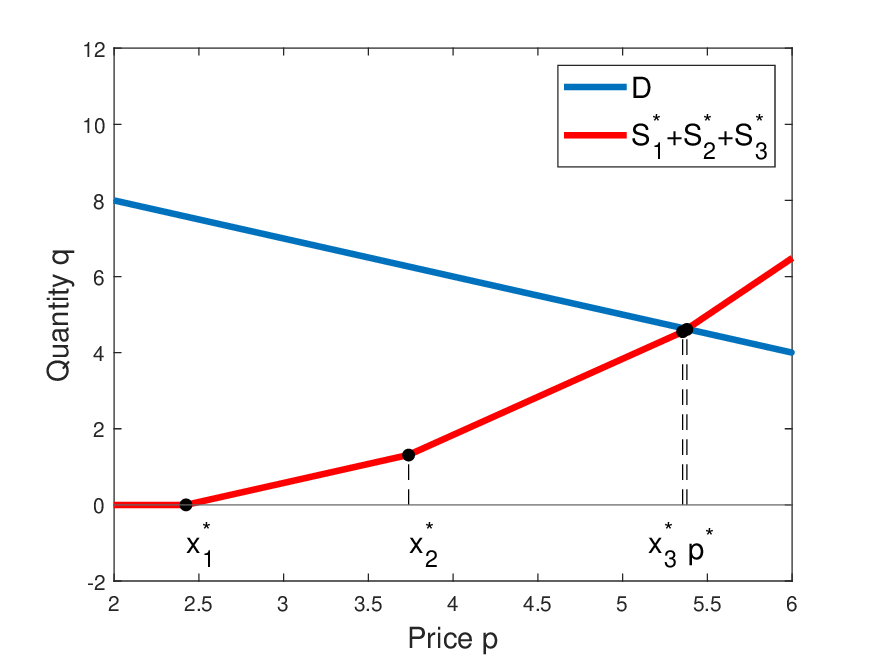}
%		\end{figure}
%			\begin{figure}
%				\includegraphics[width=0.4\textwidth]{ex2}
\includegraphics[width=0.32\textwidth]{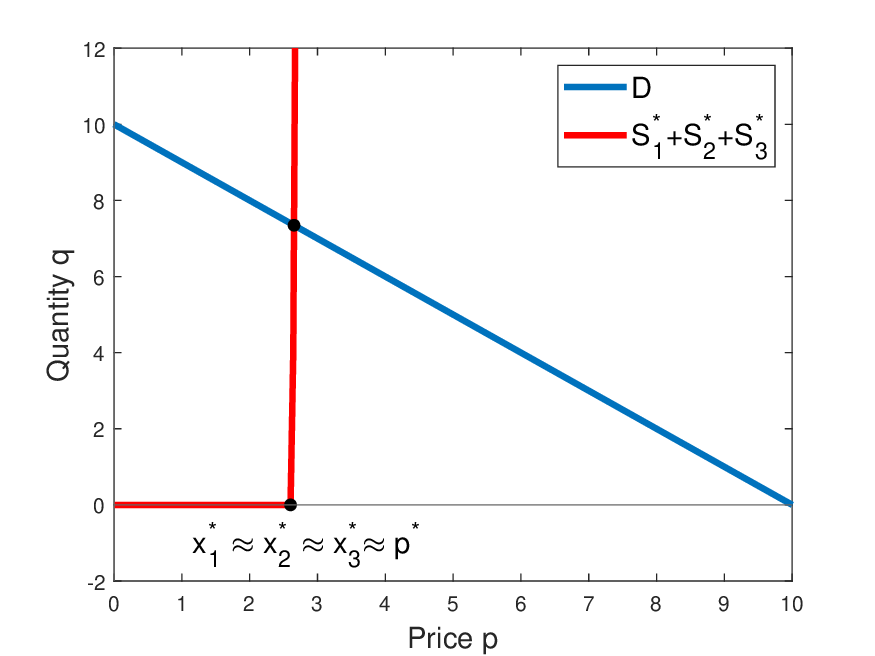}
\caption{Aggregate supply and demand at Nash equilibrium in the settings of Example \ref{ex:1_lin}, \ref{ex:2_lin} and \ref{ex:3_lin} (from left to right, respectively).}%From the left, aggregate demand and supply af Nash equilibrium $S^*$, for vector of costs $c=[]$, $c=$ and $K=100$.}
\label{fig:ex_lin}
\end{figure}
Example \ref{ex:3_lin} leads to a discussion on the value of $K$. %Indeed, our model requires $K$ to be fixed, but for any $K$ we obtain different equilibrium outcomes.  Recall that, according to Proposition \ref{pr:no_br} and the following remark, best responses would exist in the general setting if one could use step functions. Also, as $K$ increases, we are enlarging the strategy space. Therefore, a fundamental example that requires a deeper study is the case when $K$ approaches infinity. 
In the following result, we compute explicitly the resulting market-clearing price when $K$ goes to infinity. This limit permits a comparison with equilibrium outcome in standard oligopoly models, such as Cournot and Bertrand, and in the Supply Function equilibrium. 

\begin{proposition}\label{pr:limit}
Consider a set of producers $\mathcal N$, quadratic production cost functions as in \eqref{eq:quad_c} for every $i$ in $\mc N$, and affine demand function \eqref{eq:aff_dem}. 
For $K>0$, let $x_i^{(K)}$, for $i$ in $\mc N$ and $p^*_K$ be the activation prices and, respectively, the clearing price of any Nash equilibrium $S^{(K)}$ of the $\lip$-PAB auction game. Then, for every producer $i$ in $\mc N$, 
\be\label{eqlimxip}\lim_{K\to+\infty}x_i^{(K)}=\lim_{K\to+\infty}p^*_K=\frac{\hat p\gamma+b\sum_{j=1}^{n}1/c_j}{\gamma+\sum_{j=1}^{n}1/c_j}.\ee
Moreover, the asymptotic quantity of good sold by a producer $i$ in $\mc N$ is
\be\label{eqlimSK}\lim_{K\to+\infty}S_i^{(K)}(p^*_K)=\frac{\gamma/c_i}{\gamma+\sum_{j=1}^{n}1/c_j}[\hat p-b]_+\,,\ee
while its asymptotic profit is given 
\be\label{eqlimuiK}\lim_{K\to+\infty}u_i^{(K)}(p^*_K)=\frac{\gamma^2}{c_i(\gamma+\sum_{j=1}^{n}1/c_j)^2}[\hat p-b]^2_+\,.\ee
\end{proposition}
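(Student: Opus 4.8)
The plan is to read everything off the closed-form expressions of Theorem~\ref{theo:linear} --- the activation-price vector $x^{(K)}$ of \eqref{eq:ne_lin}, the clearing price $p^*_K$ of \eqref{eq:ep_ne_lin}, and the scalar coefficients $d_i^{(K)}$, $a_i^{(K)}$ and $\Delta^{(K)}=\sum_{j=1}^n d_j^{(K)}$ of \eqref{eq:quadr_coeff2}, \eqref{eq:quadr_coeff} and \eqref{Delta-def} --- so that the whole statement reduces to elementary limits of these scalars. Throughout I assume $\hat p>b$; in the complementary degenerate case $\hat p\le b$ one has $C_i'(0)=b\ge\hat p$ for every $i$, so by Corollary~\ref{coro:pab_eq} no producer is active at any Nash equilibrium, whence $S_i^{(K)}(p^*_K)=0$ and $u_i^{(K)}(p^*_K)=C_i(0)=0$ for all $K$, and \eqref{eqlimSK}--\eqref{eqlimuiK} hold trivially because $[\hat p-b]_+=0$.

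The first step is to compute the limits of the coefficients. Dividing numerator and denominator of \eqref{eq:quadr_coeff2} by $K$ gives $d_i^{(K)}\to 1/c_i$ (for $n\ge2$), hence $\Delta^{(K)}\to\sum_{j=1}^n 1/c_j$, and substituting into \eqref{eq:ep_ne_lin} yields $p^*_K\to p_\infty$ with $p_\infty:=\big(\hat p\gamma+b\sum_{j=1}^n 1/c_j\big)/\big(\gamma+\sum_{j=1}^n 1/c_j\big)$ and $p_\infty-b=\gamma(\hat p-b)/\big(\gamma+\sum_{j=1}^n 1/c_j\big)>0$. Since $a_i^{(K)}=(K-d_i^{(K)})/(Kn+\gamma)\to 1/n$ and $b\,d_i^{(K)}/K\to0$ while $n+\gamma/K\to n$, writing \eqref{eq:ne_lin} as $x_i^{(K)}=(n+\gamma/K)\,p^*_K\,a_i^{(K)}+\tfrac{b}{K}\,d_i^{(K)}$ and passing to the limit gives $x_i^{(K)}\to n\cdot p_\infty\cdot\tfrac{1}{n}=p_\infty$, which is \eqref{eqlimxip}.

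For the quantities, the key point is that by Corollary~\ref{coro:uniqueness} one has $S_i^{(K)}(p^*_K)=K\,[p^*_K-x_i^{(K)}]_+$, so this is an $\infty\cdot0$ indeterminacy rather than a plain limit. Here I would use the exact identity $(n+\gamma/K)\,a_i^{(K)}=1-d_i^{(K)}/K$, immediate from \eqref{eq:quadr_coeff} (equivalently, from the first-order condition \eqref{new-eq}), which upon substitution into \eqref{eq:ne_lin} yields the exact relation $K\,(p^*_K-x_i^{(K)})=d_i^{(K)}\,(p^*_K-b)$; passing to the limit then gives $S_i^{(K)}(p^*_K)\to\tfrac{1}{c_i}(p_\infty-b)$, which is \eqref{eqlimSK}. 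For the profit I would substitute $S_i^{(K)}(p)=K[p-x_i^{(K)}]_+$ into \eqref{eq:utility_pab}: the pay-as-bid remuneration integral $\int_0^{p^*_K}S_i^{(K)}(p)\dd p$ equals $\tfrac{K}{2}(p^*_K-x_i^{(K)})^2=\tfrac{1}{2K}\big(d_i^{(K)}(p^*_K-b)\big)^2\to0$, while $p^*_K\,S_i^{(K)}(p^*_K)-C_i\big(S_i^{(K)}(p^*_K)\big)$ converges by the limits already obtained, and collecting the surviving terms gives the limiting profit \eqref{eqlimuiK}.

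The only genuine obstacle is this indeterminate form $K\,(p^*_K-x_i^{(K)})$, which also appears squared inside the remuneration integral: the limits of $p^*_K$ and $x_i^{(K)}$ taken separately do not determine it, so one truly needs the exact cancellation coming from $(n+\gamma/K)a_i^{(K)}=1-d_i^{(K)}/K$; everything else is routine algebra with the formulas of Theorem~\ref{theo:linear}. Two secondary points: one must keep the positive parts throughout so the statements stay valid (and trivial) in the degenerate regime $\hat p\le b$ handled above, and the monopoly case $n=1$ is special --- there $d_i^{(K)}\to\gamma/(c_i\gamma+1)$ rather than $1/c_i$, so the limiting quantities must be recomputed accordingly.
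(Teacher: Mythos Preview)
Your proof is correct and follows essentially the same approach as the paper: both read off the limits of $d_i$, $a_i$, $\Delta$ from \eqref{eq:quadr_coeff2}--\eqref{Delta-def}, then plug into \eqref{eq:ne_lin}--\eqref{eq:ep_ne_lin}, and both resolve the $K(p^*_K-x_i^{(K)})$ indeterminacy through the same algebraic cancellation --- you via the identity $(n+\gamma/K)a_i^{(K)}=1-d_i^{(K)}/K$, the paper via the equivalent computation $\lim_{K\to\infty}K\big(1-(n+\gamma/K)a_i\big)=1/c_i$. Your version is in fact slightly more careful than the paper's: you explicitly treat the degenerate regime $\hat p\le b$ (which the paper tacitly excludes) and you correctly flag that the monopoly case $n=1$ gives a different limit for $d_i^{(K)}$ (the paper's limit $d_i\to1/c_i$ silently assumes $n\ge2$).
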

\begin{proof}
Let $a_i$, $d_i$, and $\Delta$ be defined as in \eqref{eq:quadr_coeff}, \eqref{eq:quadr_coeff2}, and \eqref{Delta-def}, respectively. Then, 
\be\label{limKai}%\label{limKdi}\label{limKDelta}
%\begin{aligned}
\lim_{K\to+\infty}a_i=%\\&=\lim_{K\to+\infty}\frac{Kc_i(n-1)+c_i\gamma+1}{(n+\gamma/K)(Kc_i(n-1)+c_i\gamma+n+\gamma/K)}
\frac1n\,,\;\; \lim_{K\to+\infty}	d_{i} =\frac1{c_i}\,,\;\; \lim_{K\to+\infty}\Delta=\sum_{i\in\mc N}\frac1{c_i}\,,
%\end{aligned}
\ee
%\be\label{limKdi}\lim_{K\to+\infty}	d_{i} =\lim_{K\to+\infty}\frac{K(n-1)+\gamma }{Kc_i(n-1)+c_i\gamma+n+\gamma/K}
%=\frac1{c_i}\,,\ee
%for every producer $i$ in $\mc N$, so that
%\be\label{limKDelta}\lim_{K\to+\infty}\Delta=\lim_{K\to+\infty}\sum_{i\in\mc N}d_i=\sum_{i\in\mc N}\frac1{c_i}\,.\ee
It then follows from \eqref{eq:ne_lin} and \eqref{limKDelta} that 
$$\lim_{K\to+\infty}x_i^{(K)} = \lim_{K\to+\infty}\left(n+\frac{\gamma}{K}\right)\frac{\hat p\gamma+b\Delta}{\gamma+\Delta}a_i+\frac bKd_i=\frac{\hat p\gamma+b\sum_{j=1}^{n}1/c_j}{\gamma+\sum_{j=1}^{n}1/c_j}\,,$$
for every producer $i$ in $\mc N$, while \eqref{eq:ep_ne_lin} and \eqref{limKDelta} imply that
$$\lim_{K\to+\infty}p^*_K=\lim_{K\to+\infty}\frac{\hat p\gamma+b\Delta}{\gamma+\Delta}=\frac{\hat p\gamma+b\sum_{j=1}^{n}1/c_j}{\gamma+\sum_{j=1}^{n}1/c_j} \,,$$
thus proving \eqref{eqlimxip}. Moreover, observe that from \eqref{eq:quadr_coeff} we get that
\be\label{limK}
\begin{aligned}
\lim_{K\to+\infty}&K\left(1-\left(n+\frac\gamma{K}\right)a_i\right)=\\=&\lim_{K\to+\infty}\frac{K(n-1)+\gamma}{Kc_i(n-1)+c_i\gamma+n+\gamma/K}=\frac1{c_i}\,,
\end{aligned}\ee
for every producer $i$ in $\mc N$, so that
$$\ba{rcl}\ds\lim_{K\to+\infty}S^{(K)}(p^*_K)
&=&\ds\lim_{K\to+\infty}K[p^*_K-x_i^{(K)}]_+\\
&=&\ds\lim_{K\to+\infty}\left[\frac{\hat p\gamma+b\Delta}{\gamma+\Delta}K\left(1-\left(n+\frac\gamma{K}\right)a_i\right)-bd_i\right]_+\\
&=&\ds\left[\frac{\hat p\gamma+b\sum_{j=1}^{n}1/c_j}{\gamma+\sum_{j=1}^{n}1/c_j}-b\right]_+\frac1{c_i}\\
&=&\ds\frac{\gamma/c_i}{\gamma+\sum_{j=1}^{n}1/c_j}[\hat p-b]_+
\ea$$
where the second equality follows from \eqref{eq:ep_ne_lin} and \eqref{eq:ne_lin} and the third one from \eqref{limK} and \eqref{limKdi}.  
Hence, we have proved \eqref{eqlimSK}. Finally, by \eqref{eq:utility_pab}, 
$$\ba{rcl}	\ds u_i^{(K)}(p^*_K)
&=&\ds p^*_KS_i^{(K)}(p^*_K)-\int_0^{p^*_K}S _i^{(K)}(p)\dd p - C^{(K)}_i(S_i^{(K)}(p^*_K))\\
&=&\ds p^*_KS_i^{(K)}(p^*_K)-K\int_{x_i^{(K)}}^{p^*_K}(p-x_i^{(K)})\dd p -C^{(K)}_i(S_i^{(K)}(p^*_K))\\
%&=&\ds p^*_KS_i^{(K)}(p^*_K)-\frac K2(p^*_K-x_i^{(K)})^2 -C^{(K)}_i(S_i^{(K)}(p^*_K))\\
&=&\ds p^*_KS_i^{(K)}(p^*_K)-\frac {1}{2K}(S_i^{(K)}(p^*_K))^2 -\frac{c_i}{2} \left(S_i^{(K)}(p^*_K)\right)^2-bS_i^{(K)}(p^*_K)\,.
\ea$$
Therefore,
$$\ba{rcl}\ds\lim_{K\to+\infty}u_i^{(K)}(p^*_K)
%&=&\ds\left(\frac{\hat p\gamma+b\sum_{j=1}^{n}1/c_j}{\gamma+\sum_{j=1}^{n}1/c_j}-b\right)\frac{\gamma/c_i}{\gamma+\sum_{j=1}^{n}1/c_j}[\hat p-b]_+\\&&
%\ds-\frac{c_i}{2}\frac{\gamma^2/c_i^2}{(\gamma+\sum_{j=1}^{n}1/c_j)^2}[\hat p-b]^2_+\\
&=&\ds\frac{\gamma^2(\hat{p}-b)}{c_i(\gamma+\sum_{j=1}^{n}1/c_j)^2}[\hat p-b]_+-\frac{\gamma^2}{2c_i(\gamma+\sum_{j=1}^{n}1/c_j)^2}[\hat p-b]^2_+
\\
&=&\ds\frac{\gamma^2}{2c_i(\gamma+\sum_{j=1}^{n}1/c_j)^2}[\hat p-b]^2_+
\,,\ea$$
where the fifth equality follows from \eqref{eqlimxip} and \eqref{eqlimSK}.
\end{proof}

We  conclude this section with the following fundamental remark, showing that, according to Proposition \ref{pr:limit}, in the limit, we obtain \emph{perfect competition} at the Nash equilibrium.
\begin{remark}
According to Proposition \ref{pr:limit}, for $K\rightarrow \infty$, we have that $\lim_{K\rightarrow \infty} x_i^{(K)}=\lim_{K\rightarrow \infty} p^*_K=p^*_{\infty}$ for all agents $i$ in $\mathcal N$. %, where $x^*$ is the unique equilibrium of the activation price  game. 
This observation implies that, as $K$ approaches infinity, %we are approximating 
the set of Nash equilibria of the $\lip$-PAB auction game converges to a set of \emph{step functions} that are zero up to the market-clearing price $p^*_{\infty}$.  This was also observed in Example \ref{ex:3_lin}. Therefore, by increasing the value of $K$, we are estimating the behavior of the PAB auction game with strategy space $\mathcal A=\cmon$ %, %, by converging to the step function best responses 
%that we observed at the begin of the chapter in the
(recall the proof of Proposition \ref{pr:no_br} and Remark \ref{rem:step_f}).
%			\end{remark}
%			\begin{remark}\color{red}
%	perfect competition with efficient distribution of quantities
Furthermore, in this setting, marginal costs are given by $C'_i(q_i) = c_i q_i+b$ and, for $\hat p\geq b$, we find
$$
\ba{rclclcl}
\ds\lim_{K\to+\infty}C_i'\left(S_i^{(K)}(p^*_K)\right)
&=&\ds C_i'\left(\frac{\gamma/c_i}{\gamma+\sum_{j=1}^{n}1/c_j}[\hat p-b]_+\right)\\[10pt]
&=&\ds C_i'\left(\frac1c_i\left(\frac{\hat p\gamma+b\sum_{j=1}^{n}1/c_j}{\gamma+\sum_{j=1}^{n}1/c_j}-b\right)\right)
&=&\ds C_i'\left(\frac{p^*_\infty-b}{c_i}\right)&=&\ds p^*_\infty
%				\\[10pt]
%				&=&\ds p^*_\infty
\,,\ea$$
for every $i$ in $\mathcal N$. 
Therefore, in the limit, we obtain \emph{perfect competition} (i.e., the price is equal to the marginal costs). As we shall see in the next section, when the costs are quadratic and homogeneous,  the unique equilibrium of the activation price game corresponds, in the limit, to the "best" Nash equilibrium of the Bertrand competition, that is, the one corresponding to perfect competition. Furthermore, we remark that we obtain  \emph{efficient allocation} also when the costs are heterogeneous.
%Furthermore, since $q_{i,\infty}\propto 1/c_i$ we also have \emph{efficient allocation}.
%In Proposition \ref{pr:limit}, we find that, as $K$ goes to infinity, the total sold quantity at Nash equilibrium in the $\lip$-PAB auction game is given by $$q^*_{i, {\infty}}:= \lim_{K\to+\infty}S_i^{(K)}(p^*_K)=\frac{\gamma/c_i}{\gamma+\sum_{j=1}^{n}1/c_j}[\hat p-b]_+=\frac1c_i(\frac{\hat p\gamma+b\sum_{j=1}^{n}1/c_j}{\gamma+\sum_{j=1}^{n}1/c_j}-b)$$, for all $i$ in $\mathcal N$.	Notice that we find the same sold quantity if we consider uniform-price remuneration and supply bids $S_i(p)=\frac{1}{c_i}(p-b)$ for all $i$ in $\mathcal N$. If we combine all these remarks, we find that we observe the same outcome as truthful bidding in uniform-price auctions.		
\end{remark}

\subsection{Comparative statics}\label{ss:comparative}
%{\color{black} questa sezione Ãš ancora da scrivere e quelle successive sono da riscrivere}
%In this section,  %. Also, we shall assume for simplicity that $c_i=c$ for all $i$ in $\mathcal N$. 
We now aim to compare the market-clearing price at Nash equilibrium %at Nash equilibrium 
in the $\lip$-PAB auction as $K$ goes to infinity with those of other oligopoly models. 
%	In particular, we use the derived closed-form expression $p^*_{\infty}$ in Proposition \ref{pr:limit} to establish that the market-clearing price of the pay-as-bid auction game at Nash equilibrium falls between the Bertrand and Cournot oligopoly models. Additionally, it results in a lower market-clearing price compared to Supply Function equilibria.
Throughout, we consider a market with $n$ agents, an aggregate affine demand as in \eqref{eq:aff_dem} and quadratic costs as in \eqref{eq:quad_c}. % for some given $\hat p, \gamma>0$ and $c_i> 0$ for all $i$. 
\subsubsection{Cournot and Bertrand competition}	
In the following two examples, we compute the Nash equilibria of two famous oligopoly models \cite{microeconomics}, i.e., the Cournot competition \cite{cournot1838recherches}, which is a "quantity offering" game, and the Bertrand competition \cite{bertrand1883review}, which is a "price offering" game. %s anticipated, we will consider the same setting of the $K$-L PAB auction game and 
In these examples and in the following comparison, we consider \textit{symmetric} quadratic costs as in \eqref{eq:quad_c} with $c_i =c>0$ for all $i$ in $\mathcal N$ and $b=0$. 

In the Cournot competition, %\cite{cournot1838recherches,microeconomics}, 
firms compete in quantities, i.e., their strategies are the quantities $x_i \in \R^+$ that they will produce.  %each firm chooses the quantity to produce and the market price is determined based on the demand for the total output produced 
The market-clearing price at which each quantity $x_i$ is remunerated is computed through the inverse demand function $P(q) = D^{-1}(q)$. %For the affine demand in \eqref{eq:aff_dem}, %For the special case when $c_i =c>0$ for all $i$ in $\mathcal N$, 
In our setting, the Cournot competition admits the unique symmetric Nash equilibrium
$$
x^* = \frac{\gamma\hat p}{1+n+c\gamma}\,.
$$
If we substitute the Nash equilibrium $x^*$ in the inverse demand function, we obtain the market-clearing price 
\begin{equation}\label{eq:p_cournot}
p^*_\text{C}=\frac{1}{\gamma}\left(\gamma \hat p -\frac{\gamma\hat p}{1+n+c\gamma}\right)=\gamma \hat{p}\left(\gamma+\frac{n\gamma}{1+\gamma c}\right)^{-1}\,.
\end{equation}	

In the Bertrand model, %\cite{bertrand1883review,microeconomics},
firms set the price and, if they bid the lowest price, their output quantity is given by the market demand at that price, divided equally with all the other firms bidding that price. Otherwise, the awarded quantity is zero. %, considering the prices chosen by all other firms. 
The strategy of an agent $i$ in $\mathcal N$ is then the marginal price $x_i \in \R^+$. %For the affine demand in \eqref{eq:aff_dem}, w
In this setting, we find a continuum of Nash equilibria, that is, for every 
$\alpha$ in $\left[0, \frac{n^2}{1+n}\right]$
\begin{equation}\label{eq:eq_ber}
x^*(\alpha) = \frac{\gamma\hat pc}{\gamma c +2(n-\alpha)} 
\end{equation}
defines a symmetric Nash equilibrium of the game. Since strategies in the Bertrand competition are prices, we also have that, for every $\alpha$ in $\left[0, \frac{n^2}{1+n}\right]$, the equilibrium price is given by
\begin{equation}\label{eq:eq_p_ber}
p^*_\text{B}(\alpha) = x^*(\alpha)= \gamma \hat p \left(\gamma + \frac{2(n-\alpha)}{c}\right)\,.%\frac{\gamma\hat pc}{\gamma c +2(n-\alpha)} \,. 
\end{equation}
See \cite{vanelli2024game} for further details on the computation.
We now wish to compare the market-clearing prices obtained in the previous examples with the market-clearing price of a Nash equilibrium of the $\lip$-PAB auction game, when $K$ goes to infinity, which in this setting, according to \eqref{eqlimxip}, is given by
$
p^*_\infty = \gamma \hat{p}\left(\gamma+{n}/c\right)\,.
$
Few algebraic computations lead to the following inequality: %fundamental remark.
%Consider $n$ agents, the affine demand in \eqref{eq:aff_dem} and quadratic costs in \eqref{eq:quad_c} for some given $N\geq0$ and $\gamma>0$ and $c_i> 0$ for $i\in \mathcal  N$. 
%Let $p^*_C$ denote the market-clearing price with Cournot competition, $p^*_B(0)$ the minimum market-clearing price with Bertrand competition and $p^*_\infty$ the equilibrium in the $K$-L PAB auction game for $K$ that approaches infinity. Then,  
%	Then, for any $N\geq0$, $\gamma>0$ and $c>0$, it holds that 
\begin{equation}\label{eq:comp1}
p^*_\text{B}(0)<p^*_{\infty}<	p^*_\text{C}\,,	\end{equation}
that holds for any $\hat p, \gamma>0$ and $c>0$.	Then, the market-clearing price at Nash equilibrium in the $\lip$-PAB auction as $K$ goes to infinity lies intermediate between the lowest Bertrand equilibrium and the unique market-clearing price at Nash equilibrium in the Cournot competition. 
Observe that, for $\alpha=\frac{n}{2}$ in $ \left[0, \frac{n^2}{1+n}\right]$, we obtain $p_\text{B}^*\left({n}/{2}\right)=p^*_\infty\,.$ Therefore, in this example, the unique Nash equilibrium of the activation price game corresponds to a Nash equilibrium of the Bertrand competition, and in particular it corresponds to the unique one achieving \emph{perfect competition}. Indeed, one can easily verify that $\alpha$ in $\left[0, {n^2}/(1+n)\right]$ satisfies
$$
C'\left(\frac{D(p^*_\text{B}(\alpha))}n\right)%=	\frac{c\gamma}n\left(\hat p-p^*_\text{B}(\alpha)\right)
=p^*_\text{B}(\alpha)
$$
for $C'(q)=cq$ and $D(p)=\gamma(\hat p-p)$ if and only if $\alpha={n}/{2}$. The strength of our result is that it allows to find perfect competition and efficient allocation also for \textit{heterogeneous costs}, which is challenging in Bertrand competitions. By imposing continuity in the supply functions, firms cannot win all the market demand by slightly reducing the price and this allows to achieve perfect competition at Nash equilibrium in several different settings.
%			\end{remark}
%\begin{proof} 
%	The first inequality can be proved by observing that $\frac{2n}{c}>\frac{n}{c}$. Similarly, the second inequality can be obtain by observing that 
%	$$
%	\frac{n\gamma}{1+\gamma c}<\frac{n}{c}\quad \Leftrightarrow \quad c\gamma <1+ c\gamma\,.
%	$$
% \end{proof}
%\begin{remark}
%	Consider $n$ agents, the affine demand in \eqref{eq:aff_dem} and quadratic costs in \eqref{eq:quad_c} for some given $N\geq0$ and $\gamma>0$ and $c_i> 0$ for $i\in \mathcal  N$. Let $p^*_C$ denote the market-clearing price with Cournot competition, $p^*_B(0)$ the minimum market-clearing price with Bertrand competition and $p^*_\infty$ the equilibrium in the $K$-L PAB auction game for $K$ that approaches infinity. Then,  
%	$$p^*_\text{B}(0)<p^*_{\infty}<	p^*_\text{C}$$
%\end{remark}

%	In remark \ref{label}
\subsubsection{Supply Function Equilibria}
We now aim to compare the market-clearing price resulting in a $\lip$-PAB auction game as $K$ goes to infinity with market-clearing prices in Supply Function Equilibria (SFE) game models. 

The SFE game model was first proposed %for the uniform-price auction 
by Klemperer and Meyer \cite{sfe} in 1989. 
Unlike the Cournot competition, where firms choose the quantity to produce, and the Bertrand competition, where firms set the price, in the SFE game model firms' strategies are supply functions of price, i.e., $S_i \in \cmon$ as in \eqref{def-supply}, for all agents $i$ in $\mathcal N$. In the absence of uncertainty, the setting is analogous to the PAB auction game and the only difference between the two models lies in the total remuneration. %in uniform-price auctions.  %, similarly to the PAB auction game. %For each marginal price $p$, the supply curve $S_i$ returns the quantity $S_i(p)$ that firm $i$ is willing to produce for such price. Once all the supply curves are submitted to the market, the \textit{equilibrium marginal price} $p^*$ is determined as the intersection of the demand curve and the sum of all the supplies. At the end of the game, firm $i$ sells a quantity $S_i(p^*)$ at the \textit{market-clearing price} $p^*$ and her utility is given by profit minus costs. Therefore, SFE model uniform price auctions, as all firms are remunerated at the same price. In the following, we will present the model in details.
SFE game models consider the uniform-price remuneration, leading to the utility, for all $i$ in $\mathcal N$,
$
u_i(S_i, S_{-i}) = p^*S_i(p^*)-C_i(S_i(p^*))\,,
$
where  $p^*=p^*(S)$ is the unique market-clearing price satisfying \eqref{eq:equilibrium}. The first fundamental observation made in \cite{sfe} is that, without uncertainty, there exists an infinite number of Nash equilibria. Such issue is solved when introducing \textit{uncertainty} in the model. %In the SFE model u
Under uncertainty, the model and the assumptions are the same, except for the \textit{demand} that is now affected by an exogenous shock $\epsilon$,
%\begin{enumerate}
%	\item there are $n=2$ firms, i.e., $\N=\{1,2\}$;
%	\item firms have identical cost functions $C(\cdot)$ with $C'(q)\geq 0$ and $C''(q)\geq0$ for all $q\geq0$;
which is a scalar random variable with density $f>0$ on $[\underline{\epsilon}, \bar{\epsilon}]$, with  $\bar{\epsilon}>\underline{\epsilon},\geq0$. The
demand function is then given by $D(p, \epsilon)$ with the further assumption that $D_p<0$, $D_{pp}\leq 0$, and $D_\epsilon>0$. 	In this setting, the authors look for ex-post Nash equilibria $(S_i, S_{-i})$, i.e., strategy profile that are Nash equilibria of the game for each realization of $\epsilon$ in $[\underline{\epsilon}, \bar{\epsilon}]$. 	
%		Unlike traditional equilibrium models that rely on solving algebraic equations, calculating an SFE involves solving a set of differential equations. 		Therefore, applications mostly involve linear supply functions \cite{linear_sfe}. More precisely, if
If we consider an affine demand function as in \eqref{eq:aff_dem}  with $\hat p\geq 0$ and $\gamma >0$ and quadratic costs as in \eqref{eq:quad_c} with  $c_i>0$, for $i$ in $\mathcal N$, we have that, according to \cite{linear_sfe} (eq. (6), pag. 10), linear supply functions of the form $S_i(p) = \beta_i p$
are SFE if and only if, %in the Supply Function Equilibria (SFE) game model, the market-clearing price is given by:
%$$
%p^*_{SFE} = N\left(\gamma + \sum_i \beta_i \right)^{-1}
%$$
%where, 
for all $i$ in $\mathcal N$, $\beta_i\geq 0$ and 
\begin{equation}\label{eq:sfe_betas}
\beta_i = (1-c_i\beta_i)\left(\gamma +\sum_{j \neq i}\beta_j\right)\,.
\end{equation}
The resulting equilibrium price in this case is
\begin{equation}\label{eq:ep_sfe_qc_ad}
p^*_\text{SFE} = \frac{\gamma\hat p}{\gamma + \sum_{i=1}^n \beta_i }\,.
\end{equation}
Recall that, in the $\lip$-PAB auction game for $K\rightarrow \infty$, the market-clearing price is given by 
$$p^*_{\infty} =  \frac{\gamma\hat p}{\gamma + \sum_{i=1}^n 1/c_i }\,.$$	
Observe that, by definition of $\beta_i$ in \eqref{eq:sfe_betas}, $\beta_i\geq 0$ if and only if $1-c_i\beta_i \geq0$, thus implying $\beta_i\leq 1/c_i$. Furthermore, for $\beta_i= 1/c_i$, we obtain $\beta_i=0$, which is an absurd. Therefore, $\beta_i<\frac{1}{c_i}$, 
leading to
$$p^*_{\infty}<p^*_\text{SFE}\,.$$
Thus, the market-clearing price at Nash equilibrium in the $\lip$-PAB auction as $K$ goes to infinity is always strictly lower than the market-clearing price in the SFE. Anyway, we remark that, differently from SFE, our model does include uncertainty in the demand.

\begin{figure}
\centering
\includegraphics[width=0.31\textwidth]{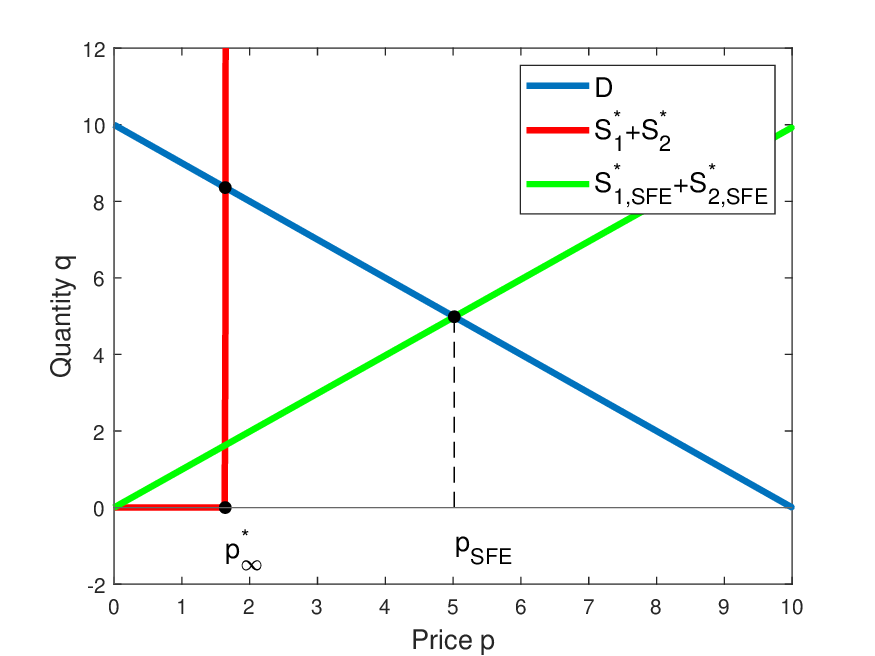}
\includegraphics[width=0.31\textwidth]{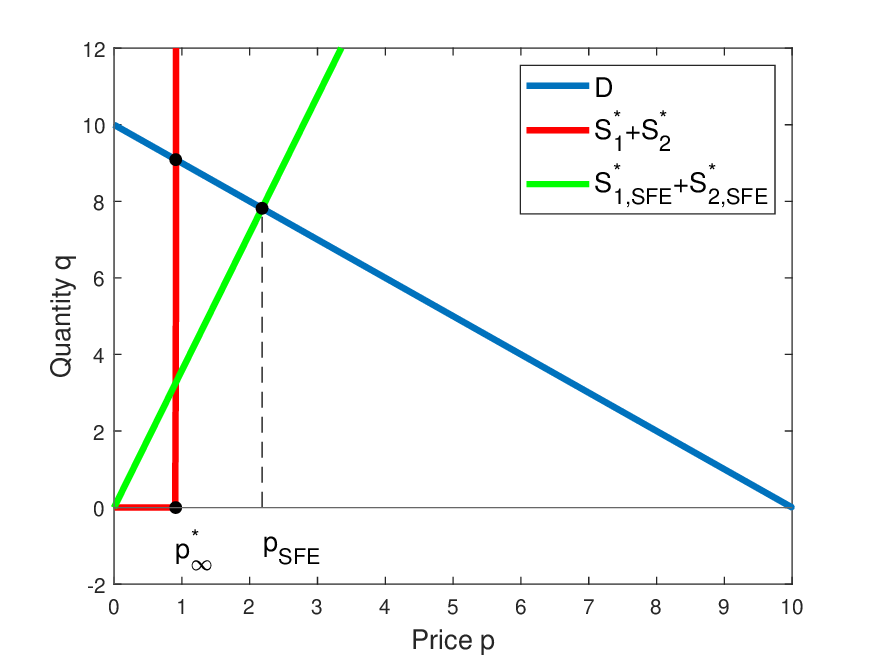}
\includegraphics[width=0.31\textwidth]{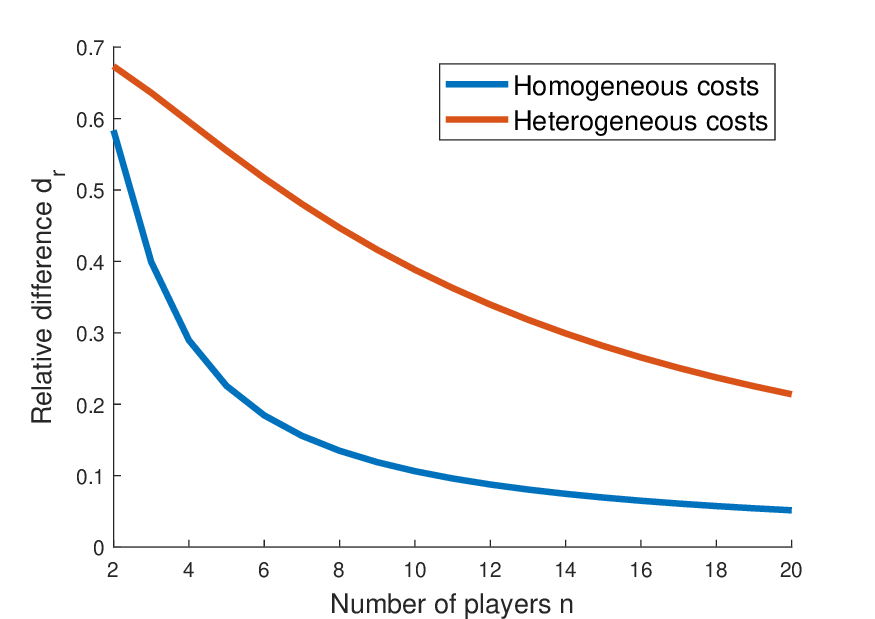}
\caption{On the left, comparison between the Nash equilibrium of the $\lip$-PAB auction game and Supply Function equilibria (see Example \ref{ex:limit1}). On the right, relative difference between $p^*_{SFE}$ and $p^*_\infty$ for different number of players and distribution of costs (see Example \ref{ex:limit2}).}
\label{fig:limit1}
\end{figure}
\begin{example}\label{ex:limit1}
Let us consider $n=2$ firms with production costs $C_1(q)=0.1q^2$, $C_2(q)=5q^2$ and let $D(p)=10-p$. For $K$ large, for instance, $K=1000$, we obtain a market-clearing price approaching $p^*_\infty = 1.8$. If we compute the Supply Function equilibrium in the same setting, we obtain $p_{SFE}=5.1$. In this case (see Figure \ref{fig:limit1}), we can observe a big difference between $p^*_\infty$ and $p_{SFE}$. This is due to the fact that there are only two players and and the costs are very heterogeneous. If we consider $C_1(q)=0.1q^2$,$C_2(q)=0.1q^2$, we still observe a significant difference between the two prices $p^*_\infty = 1 $ and $p_{SFE}=2.2$ (see the right of Figure \ref{fig:limit1}). In general, the differences between the two mechanisms are more evident when there is heterogeneity in the costs and/or the costs are very low. 
\end{example}
%				\begin{figure}
%				%	\includegraphics[width=0.32\textwidth]{../images/ex5.1}
%				%	\includegraphics[width=0.32\textwidth]{../images/ex5.2}
%					\includegraphics[width=0.4\textwidth]{../images/ex6}
%					\caption{Percentance change in $p^*_{SFE}$ and $p^*_\infty$ for different number of players (see Example \ref{ex:limit2}) }
%					\label{fig:limit2}
%				\end{figure}
\begin{example}\label{ex:limit2}
Let $D(p)=10-p$ and $K=1000$. If we consider homogeneous agents with $c_i = c=0.02$ for all $i$ in $\mathcal N$, and we compute the relative difference  $d_r = \frac{p^*_{SFE}-p^*_\infty}{p^*_{SFE}}$ we can observe that it decreases with the number of agents $n$. Anyway, it is still significant. In particular, if we consider heterogeneous agents with $c_i$ uniformly distributed in $[0.02, 5]$ we obtain that $d_r = \frac{p^*_{SFE}-p^*_\infty}{p^*_{SFE}}$ is larger and even more significant compared to the homogeneous case (see Fig \ref{fig:limit1})
\end{example}
\section{Conclusion}\label{sec:conclusions}
In this paper, we introduce and analyze the pay-as-bid auction game, a model incorporating supply function strategies, pay-as-bid remuneration, and asymmetric firms. %Unlike traditional Supply Function Equilibria (SFE) models, we consider pay-as-bid remuneration and omit demand uncertainty. It is worth noting that pay-as-bid auctions have received relatively less attention in the literature compared to uniform-price auctions, particularly within the context of supply function models, despite having several applications in electricity markets, as for instance in ancillary service markets. The limited existing results establish existence of Nash equilibria under certain conditions on the demand distribution and involve solving a set of differential equations. In contrast, 
Our fundamental finding is that, by limiting the strategy space to Lipschitz-continuous supply functions, pure-strategy Nash equilibria exist and they can be represented as piece-wise affine functions with slope equal to the maximum allowed Lipschitz constant.  More precisely, Nash equilibria of the $\lip$-PAB auction game can be fully characterized starting from Nash equilibria of a parameterized game  with continuous scalar actions: the  activation price game. This result paves the way to our comprehensive analysis of the game. %To the best of our knowledge, there are no similar findings available in existing literature.
When the demand is affine and the costs are homogeneous in zero, we demonstrate uniqueness of Nash equilibria up to the market-clearing price. We further compute a closed form expression for the case when the costs are quadratic and  we show that the market-clearing price of the pay-as-bid auction game as $K$ goes to infinity achieves perfect competition. %lies between Bertrand and Cournot oligopoly models and is lower than in Supply Function equilibria. 

%We establish that Nash equilibria do not exist in the general form of the game but emerge when agents employ $K$-Lipschitz supply functions ($\lip$-PAB auction game). These equilibria are characterized by piecewise affine functions with slope $K$. %We prove the existence and characterization of Nash equilibria for the . 
%We then focus on the $\lip$-PAB auction game with affine demand and quadratic costs, demonstrating uniqueness and characterizing Nash equilibria. We find that all Nash equilibria of the $\lip$-PAB  auction game yield the same market-clearing price and utilities. Through analysis and computation, we show that the market-clearing price of the pay-as-bid auction game lies between Bertrand and Cournot oligopoly models and is lower than in Supply Function equilibria. 

Ongoing work involves validating the model with data from the Italian electricity market. To better capture market complexities, future research directions include addressing heterogeneous costs in zero, capacity constraints, network structure, and demand uncertainty. Finally, we would like to investigate the combination of a uniform-price auction and a pay-as-bid auction as a two-stage game. Drawing inspiration from the structure of existing electricity markets, understanding the interplay between these auction mechanisms can offer valuable insights into their combined impact and potential benefits. % in electricity markets. %We also aim to relax assumptions on supply functions, explore conditions for uniqueness of Nash equilibria, and investigate combinations of auction mechanisms. %These efforts promise valuable insights into electricity market dynamics and strategic behavior.

%\end{itemize}
% Appendix here
% Options are (1) APPENDIX (with or without general title) or 
%             (2) APPENDICES (if it has more than one unrelated sections)
% Outcomment the appropriate case if necessary
%
% \begin{APPENDIX}{<Title of the Appendix>}
% \end{APPENDIX}
%
%   or 
%
% \begin{APPENDICES}
% \section{<Title of Section A>}
% \section{<Title of Section B>}
% etc
% \end{APPENDICES}

% Acknowledgments here
%\section*{Acknowledgments.}
% Enter the text of acknowledgments here

\appendix
\section{Proof of Proposition \ref{prop:quasi-concavity}}\label{sec:proof-prop:quasi-concavity}
%In this appendix, we provide all the proofs. %the proofs of Proposition \ref{pr:limit} and \ref{pr:limit2}.
During the proof we will often use the following jargon. We say that a function $\alpha: [a,b]\to\R$ is of class $\mathcal C^k[a,b]$ if its derivatives exist up to the $k$-th order and are continuous, including the boundary points where we consider, respectively,  right derivatives in $a$ and left derivatives in $b$. We indicate the derivatives also at boundaries with the usual notation $\alpha'(z)$, $\alpha''(z)$, etc.

Let us now fix $i$ and $x_{-i}$. We define $\varphi:[0, \hat p]\to[0, \hat p]$ such that for every $z$ in $[0, \hat p]$, $\varphi(z)=p^*(z, x_{-i})$ is the unique market-clearing price when $x_i=z$. Furthermore, we denote  $u(z):= \ov u_i(z, x_{-i})$ and, from expression (\ref{eq:u_r}), using some algebra and Proposition \ref{prop:increasing} (iii), we obtain the representation %for the function, that is,
\begin{equation}\label{eq:u}
\small	u(z) = \begin{cases}
\ds\frac{1}{2}\varphi^2(z)-\frac{1}{2}z^2-C_i(\varphi(z)-z) \; &0\leq z\leq \hat p_i(x_{-i})\,,\\[5pt]
-C_i(0) &\hat p_i(x_{-i})<z\leq\hat{p}\,.
\end{cases}
\end{equation}
%\begin{equation}\label{eq:u}
%	u(z) = \begin{cases}
%		\frac{\varphi^2(z)}{2}-\frac{z^2}{2}-C_i(\varphi(z)-z)&\hskip-0.1cm 0\leq z\leq \hat p_i(x_{-i})\\[5pt]
%		-C_i(0) &\hskip-0.1cm \hat p_i(x_{-i})<z\leq\hat{p}
%	\end{cases}
%\end{equation}
We now study the behavior of $\varphi$ and then of $u$ on $[0, \hat p_i(x_{-i})]$, that is, for the values $z$ satisfying $z\leq\varphi(z)$. We notice that on $[0, \hat p_i(x_{-i})]$, equation \eqref{eq:equilibrium_r} determining the function $\varphi$ can be written as
$z=\varphi(z)+\sum\limits_{j\neq i}[\varphi(z)-x_{j}]_+-D(\varphi(z))$. This implies that the function
$f:[0, \hat p]\to\R$ given by
\begin{equation}\label{eq:f}
f(w)=w+\sum\limits_{j\neq i}[w-x_{j}]_+-D(w)
\end{equation}
satisfies
%and we notice that, from (\ref{equilibrium}), we have that
$z=f(\varphi(z))$  for every $z\leq \hat p_i(x_{-i})$.
As $f$ is strictly increasing and thus invertible, we have that $\varphi(z)=f^{-1}(z)$ for $z$ in $[0,\hat p_i(x_{-i})]$. Notice that $f$ is continuous and piecewise $\mathcal C^2$ with lack of derivatives at points $\{x_j\;\, j\neq i\}$ and moreover that $f'(z)>1$ wherever the derivative exists (also as right and left derivative at points $x_j$'s).
Basic calculus shows that $\varphi(z)$ is also continuous on $[0, \hat p_i(x_{-i}]$ and $\mathcal C^2$ except at points in 
$$\mathcal D=\{f(x_j)
\;|\, j\neq i,\, x_j\in [0, \hat p_i(x_{-i}]\}\,.$$	%\varphi(x_j)\;\, j\neq i,\, x_j\in [0, \bar x]\}$$} 
We label points in $\mathcal D\cup\{0, \hat p_i(x_{-i}\}$  as $z_1< z_2<\cdots< z_q$  and we consider the intervals $I_k=[z_k, z_{k+1}]$ for $k=1,\dots , q-1$. We fix one such interval $I_k$.  Then, the restriction $\phi_{|I_k}$ is of class $\mathcal C^ 2(I_k)$ and the following estimations hold true for $z\in I_k$:
%Moreover,  if $x \not\in \mathcal D$, we have that
\begin{equation}\label{eq:dphi}
\begin{aligned}
0 < \varphi'(z) &= \frac{1}{f'(\varphi(z))}<1
%		\&=\frac{1}{1+\sum_{j\neq i}\mathbbm{1}_{\{\varphi(x) >x_j\}}-D'(\varphi(x))}< 1
\,;
\end{aligned}
\end{equation}
\begin{equation}\label{eq:dphi2}
\varphi''(z) = \frac{D''(\varphi(z)) \varphi'(z)}{(f'(\varphi(z)))^2}\leq 0\,,
\end{equation}
because of the properties of $f$ and the standing assumption on $D$. %Moreover, estimations \eqref{eq:dphi} and \eqref{eq:dphi2} 
Similarly, $u_{|I_k}$ is of class $\mathcal C^2(I_k)$ and we have for $z\in I_k$,
%If $x \not\in\mathcal D$ we can compute
\begin{equation}\label{u-derivative}
\begin{aligned}
u'(z)=&\varphi(z)\varphi'(z)-z-C_i'(\varphi(z)-z)(\varphi'(z)-1)  \\ 	
=&(\varphi(z)-C_i'(\varphi(z)-z))(\varphi'(z)-1)\\&+\varphi(z)-z\,.
\end{aligned}
\end{equation}
We now study the sign of $u'$ on $I_k$ showing, in particular, it can only change sign once from positive to negative.
%We now show that in any interval not intersecting $\mathcal D$, the derivative of $u$ either does not change sign or changes sign once from positive to negative. 
If $z^*\in I_k$ is such that $u'(z^*)=0$, it follows from (\ref{eq:dphi}), (\ref{u-derivative}) and  the fact that $\varphi(z)\geq z$ for every $z\in[0,\hat p_i(x_{-i})]$, that %, and the fact that $z\leq \varphi(z)$% on $I_k$, that
$$\varphi(z^*)-C_i'(\varphi(z^*)-z^*) =\underbrace{(\varphi(z^*)-z^*)}_{\geq0}/\underbrace{(1-\varphi'(z^*))}_{>0}\geq 0\,.$$
%
%
%The quasi-concavity of $u$ in $[0, \bar{x}]$ can be now proved by observing that $u$ is concave in the stationary points. % For $x\in [0, \bar{x}]$
%Indeed, let us consider a stationary point $x^* \in[ 0, \bar{x})$, $x^*\neq x_j$ for all $j$. Then, 
%$$
%\begin{aligned}
%u_i'(x^*)=0 \,\, \Leftrightarrow \,\, &\varphi(x^*)\varphi'(x^*)-x^*-C_i'(\varphi(x^*)-x^*)(\varphi'(x^*)-1) = 0 \\ 	\Leftrightarrow \,\, 
%	&(\varphi(x^*)-C_i'(\varphi(x^*)-x^*))(\varphi'(x^*)-1)+\varphi(x^*)-x^* = 0 \\ 	\Leftrightarrow \,\, &
%\varphi(x^*)-C_i'(\varphi(x^*)-x^*) = \underbrace{(\varphi(x^*)-x^*)}_{\geq0}/\underbrace{(1-\varphi'(x^*))}_{>0}\geq 0
%\end{aligned}
%$$
and, therefore, by \eqref{eq:dphi}, \eqref{eq:dphi2} and the standing assumptions on $C_i$, %and, thus, %for a stationary point $x^*$ satisfying $u'(x^*)=0$, it holds
\begin{equation}\label{eq:conc_sp}
\begin{aligned}
u''(z^*)\small{=} &\underbrace{\varphi'(z^*)^2-1}_{<0}-\underbrace{C_i''(\varphi(z^*)-z^*)}_{\geq 0}(\varphi'(z^*)-1)^2\\&+\underbrace{(\varphi(z^*)-C_i'(\varphi(z^*)-z^*))}_{\geq 0}\underbrace{\varphi''(z^*)}_{\leq 0}< 0 \,.%\\
%	=&\,\varphi'(x^*)^2-1-C_i''(\varphi(x^*)-x^*)(\varphi'(x^*)-1)^2\\&+ \varphi''(x^*)(\varphi(x^*)-x^*)/(1-\varphi'(x^*))\leq 0\,.
\end{aligned}
\end{equation}
Therefore, the sign of $u'$ is locally strictly positive at the left of $z^*$ and strictly negative at the right of $z^*$. This implies that at most one stationary point can be present. %and i
In case a stationary point $z^*$ exists, there are only three possible things that can happen. The first possibility is that $z^*\in \mathring{I_k}$: in this case $u'(z)>0$ for $z\in [z_k, z^*[ $ and $u'(z)<0$ for $z\in ]z^*, z_{k+1}]$. The other two possibilities are that $z^*$ is on the boundary: either $z^*=z_k$ and $u'(z)<0$ for all $z\in ]z_k, z_{k+1}]$ or $z^*=z_{k+1}$ and $u'(z)>0$ for all $z\in [z_k, z_{k+1}[$. When one of such situations happens, we refer to the interval $I_k$ as, respectively, to a $(+\,0\,-)$ interval, a $(0\,-)$ interval, and a $(+\,0)$ interval. We refer to $I_k$ as to a $(-)$ interval or to a $(+)$ interval if instead $u'(z)<0$, respectively, $u'(z)>0$ for every $z\in I_k$.
Finally, it follows from \eqref{eq:dphi} and the fact that $f'_-(x_k)<f'_+(x_k)$ for every $k=2,\dots , q-1$ that
$$0\leq (\varphi_{|I_{k+1}})'(z_k)\leq(\varphi_{|{I_{k}}})'(z_k)<1\,.$$
%for every $k=2,\dots , q-1$.We are left with studying the behavior of $u$ at points in $\mathcal D$. Notice first that because of \eqref{eq:dphi}, $$0\leq (\varphi_{|I_k})'(z_k)\geq(\varphi_{|{I_{k+1}}})'(z_k)<1\,.$$
This relation combined with \eqref{eq:dphi}, the fact that $\varphi(z)\geq z$ for every $z\in[0,\hat p_i(x_{-i})]$, and %allows to conclude from the 
expression (\ref{u-derivative}) yield
$$\varphi(z_k)-C_i'(\varphi(z_k)-z_k)\geq 0 
\quad\Rightarrow \quad (u_{|I_k})'(z_k)\geq (u_{|{I_{k+1}}})'(z_k)\,,$$
%\begin{aligned}
%u'_-(x_j)&= (\varphi(x_j)-C_i'(\varphi(x_j)-x_j))\varphi'_-(x_j)-(x_j-C_i'(\varphi(x_j)-x_j))\\ &\geq u_+'(x_j)\,,\end{aligned}
%$$
$$\varphi(z_k)-C_i'(\varphi(z_k)-z_k)< 0 \quad \\[5pt]\quad\Rightarrow \quad 0 {<}(u_{|I_k})'(z_k)\leq (u_{|{I_{k+1}}})'(z_k)\,.$$

This implies that if $I_k$ is either a $(+\,0\,-)$ interval, or a $(0\,-)$ interval, or a $(-)$ interval, then $I_h$ is a  $(-)$ interval for every $h>k$. Similarly, 
if $I_k$ is either a $(+\,0\,-)$ interval, or a $(+\, 0)$ interval, or a $(+)$ interval, then $I_h$ is a  $(+)$ interval for every $h<k$. This implies the following. Either $u$ is monotonic: strictly decreasing (all $-$ intervals or all $-$ intervals but the first one that is a $(0\,-)$ interval), or strictly increasing (all $+$ intervals or all $+$ intervals but the last one that is a $(+\, 0)$ interval). Otherwise, a change of monotonicity can happen in two ways. Either inside a $(+\,0\,-)$ interval (necessarily with only $(+)$ intervals at its left and $(-)$ intervals at its right) and in this case $u$ is unimodal, first strictly increasing and after strictly decreasing. Otherwise, it can happen with a $(+)$ or a $(+\,0)$ interval followed by a $(-)$ or by a $(0\, -)$ interval. Arguing as in the previous case, we conclude that $u$ is in this case unimodal, first strictly increasing and after strictly decreasing. 

We are now ready to prove the three statements:

(i) Since the function $u$ is continuous on $[0, \hat p]$ and in the interval $[\hat p_i(x_{-i}), \hat p]$ is constant, also in the larger domain $[0, \hat p]$ maintains the above property: it is either monotonic, non-decreasing or non-increasing, or bimodal, first increasing and then non-increasing in $[0, \hat{p}]$.  For a classical result, this implies that $u$ is quasi-concave.

(ii) It is an immediate consequence of previous analysis of the modality of $u$ on $[0, \hat p_i(x_{-i})]$.

(iii) Consider the last interval $I_q$ in the analysis above and take $z^*=\hat p_i(x_{-i})$. This is a  maximum point if and only if $I_q$ is either a $(+)$ or a $(+\,0)$ interval and this is equivalent to asking that $u'(z)\geq 0$.
%From expression \eqref{u-derivative}, using the fact that
Since $\phi(z^*)=z^*$, we deduce that the sign of $u'(z^*)$ is the same of the sign of the quantity 
$\varphi(z^*)-C_i'(\varphi(z^*)-z^*)=\hat p_i(x_{-i})-C_i'(0)$.
This yields the result and completes the proof.
$\hfill\square$ 

% References here (outcomment the appropriate case) 

% CASE 1: BiBTeX used to constantly update the references 
%   (while the paper is being written).

% Bibliography
%\bibliographystyle{ACM-Reference-Format}
%\bibliographystyle{informs2014} % outcomment this and next line in Case 1
%\bibliographystyle{ieeetr} 
\bibliographystyle{apalike}
\bibliography{bib} % if more than one, comma separated

% CASE 2: BiBTeX used to generate mypaper.bbl (to be further fine tuned)
%\input{mypaper.bbl} % outcomment this line in Case 2

\end{document}